\documentclass[reqno,A4paper]{amsart}
\usepackage{amsmath}
\usepackage{amssymb}
\usepackage{amsthm}
\usepackage{enumerate}
\usepackage{mathrsfs} 
\usepackage{eqlist}
\usepackage{array}

\usepackage[english]{babel}                             
\usepackage[latin1]{inputenc}
\usepackage{comment}
\usepackage{ae}
\usepackage[leqno]{amsmath}  


\usepackage[dotinlabels]{titletoc}     

\usepackage{amssymb,latexsym,verbatim} 
\usepackage{amstext}
\usepackage{amsfonts}
\usepackage{amsbsy}
\usepackage{amsopn}
\usepackage{enumerate}
\usepackage{txfonts}
\usepackage{amsmath, amsthm, amssymb}

\usepackage{color}
\usepackage[usenames,dvipsnames,svgnames,table]{xcolor}
\usepackage[numbers,sort&compress]{natbib}
\usepackage{bm}
\allowdisplaybreaks
\sloppy

\newtheorem{theorem}{Theorem}
\newtheorem{definition}[theorem]{Definition}
\newtheorem{lemma}[theorem]{Lemma}

\newtheorem{proposition}[theorem]{Proposition}
\newtheorem{remark}[theorem]{Remark}
\numberwithin{equation}{section}
\numberwithin{theorem}{section}

\renewcommand{\epsilon}{\varepsilon}
\renewcommand{\rho}{\varrho}

\DeclareMathOperator{\supp}{supp}
\DeclareMathOperator*{\esssup}{ess\,sup}
\DeclareMathOperator*{\essinf}{ess\,inf}
\DeclareMathOperator*{\essosc}{ess\,osc}
\DeclareMathOperator*{\loc}{loc}


\def\XXint#1#2#3{{\setbox0=\hbox{$#1{#2#3}{\int}$ }
\vcenter{\hbox{$#2#3$ }}\kern-.6\wd0}}


\usepackage{graphicx} 

\def\YYint#1#2#3{{\setbox0=\hbox{$#1{#2#3}{\iint}$}
    \vcenter{\hbox{$#2#3$}}\kern-.51\wd0}}



\numberwithin{equation}{section}



\begin{document}

\title[H\"older regularity for nonlocal doubly degenerate parabolic equations]%
{H\"older regularity of weak solutions to nonlocal doubly degenerate parabolic equations
}
\author[Qifan Li]%
{Qifan Li*}

\newcommand{\acr}{\newline\indent}

\address{\llap{*\,}Department of Mathematics\acr
                   School of Mathematics and Statistics\acr
                   Wuhan University of Technology\acr
                   430070, 122 Luoshi Road,
                   Wuhan, Hubei\acr
                   P. R. China}
\email{qifan\_li@yahoo.com,
qifan\_li@whut.edu.cn}

\subjclass[2010]{35K10, 35K59, 35K65, 35K92.} 
\keywords{Doubly degenerate parabolic equation, Quasilinear parabolic equation, Regularity theory, Nonlocal equation.}

\begin{abstract}
We study local regularity for nonlocal doubly degenerate parabolic equations. The model equation is
 \begin{equation*}\begin{split}
\partial_t(|u|^{q-1}u)+\mathrm{P}.\mathrm{V}.\int_{\mathbb{R}^n}\frac{|u(x,t)-u(y,t)|^{p-2}(u(x,t)-u(y,t))}{|x-y|^{n+sp}}\,\mathrm{d}y=0,
\end{split}
\end{equation*}
where $0<s<1$, $p>2$ and $0<q<p-1$.
Under a parabolic tail condition, we show that any locally bounded and sign-changing solution is locally H\"older continuous.
Our proof is based on a nonlocal version of
De Giorgi technique and the method of intrinsic scaling.
\end{abstract}
\maketitle
\section{Introduction}
In recent years, there has been tremendous interest in developing regularity theory of doubly nonlinear parabolic equations
whose prototype is
\begin{equation}\label{localequation}\partial_t (|u|^{q-1}u)-\operatorname{div}|Du|^{p-2}Du=0,\end{equation}
for $u=u(x,t):\mathbb{R}^{n+1}\to\mathbb{R}$.
For an account of the De Giorgi-Nash-Moser theory for this kind of equations, we refer the reader to \cite{BDL, BDLS, BHSS, FS, I, Liao1, LS, PV, V} and references therein. In particular, the H\"older regularity of sign-changing solutions
to \eqref{localequation} has been studied in a series papers \cite{BDL, BDLS, LS}.
According to these papers, the doubly nonlinear equations of the form \eqref{localequation} have been treated separately for three cases, which are
the borderline case, i.e., $p>1$ and $q=p-1$, the doubly degenerate case, i.e., $p>2$ and $0<q<p-1$ and the doubly singular case, i.e., $1<p<2$
and $0<p-1<q$.
 In contrast to \cite{I, PV}, which identified the doubly degenerate range by $0<q<1$ and $p>2$.
Recall that the Barenblatt fundamental solution for the equation \eqref{localequation} can be constructed by
 \begin{equation*}
B_{p,q}(x,t)=	\begin{cases}
	 t^{-\frac{n}{\lambda}}\left[1-b\left(t^{-\frac{1}{\lambda}}|x|\right)^\frac{p}{p-1}\right]_+^\frac{(p-1)q}{p-1-q},\quad t>0\\
	0,\quad t\leq0
	\end{cases}
\end{equation*}
where $\lambda=n\frac{p-1-q}{q}+p$ and $b=\frac{(p-1-q)}{p}\lambda^{-\frac{1}{p-1}}$ (see, for example \cite{GS}); it is more natural to consider
a wider range $p>2$ and $0<q<p-1$ to classify the doubly degenerate case \cite{Liao2}. On the other hand, in the case $p-1=q$,
the equation \eqref{localequation} is called \emph{Trudinger equation}.

It is of interest to know whether the H\"older regularity theory for local equation \eqref{localequation} can be extended to the nonlocal case.
To this end, we consider nonlocal doubly nonlinear parabolic equations of the form
 \begin{equation}\begin{split}\label{model}
\partial_t(|u|^{q-1}u)+\mathrm{P}.\mathrm{V}.\int_{\mathbb{R}^n}\frac{|u(x,t)-u(y,t)|^{p-2}(u(x,t)-u(y,t))}{|x-y|^{n+sp}}\,\mathrm{d}y=0
\end{split}
\end{equation}
in this paper. Here, we assume that $s\in(0,1)$. In the borderline case, i.e., $p-1=q$,
the equation \eqref{model} is also called \emph{nonlocal Trudinger equation}. Moreover, the equation \eqref{model} is said to be
\emph{nonlocal doubly degenerate} if $p>2$ and $0<q<p-1$.
In the past ten years, there have been extensive study of nonlocal integro-differential operators.
The study of regularity theory for the fractional $p$-Laplace equation goes back to
Castro, Kuusi and Palatucci \cite{CKP1, CKP2}.
In \cite{APT, BK, BLS, DZZ, Liao}, the authors proved the H\"older continuity results for the time dependent parabolic case.
The case of parabolic De Giorgi classes was subsequently treated by Nakamura \cite{Nakamura}.
Recently, a great deal of mathematical effort has been devoted to the study of nonlocal Trudinger equation.
Banerjee, Garain and Kinnunen \cite{BGK1} established a local boundedness estimate and a reverse H\"older inequality
for solutions to the nonlocal Trudinger equation. Furthermore, Shang and Zhang \cite{SZ} established a weak Harnack inequality
and subsequently Adimurthi \cite{KA} proved the H\"older regularity result for this kind of equation.
Recently, Ciani and Nakamura \cite{CiNa} studied regularity theory for a nonlocal $p$-homogenous De Giorgi class which related to the nonlocal
Trudinger equation.

However, limited
work has been conducted on nonlocal doubly nonlinear equations for $q\neq p-1$.
The first regularity result for the nonlocal doubly nonlinear equations of general type was established by Banerjee, Garain and Kinnunen \cite{BGK2}.
Recently, Kim, Lee and Prasad proved the H\"older continuity for the nonlocal porous medium equation, which is the case of $q>0$ and $p=2$.
Motivated by this work,
we are interested in the H\"older continuity
for the nonlocal doubly degenerate equations. In this paper, we prove that any bounded weak solution to this kind of
equations is locally H\"older continuous. This also extends \cite[Theorem 1.1]{BDLS} to the nonlocal case.

Our approach is in the spirit of \cite{KA, BGK2, BDLS, BK, DZZ, Liao}, which uses the method of intrinsic scaling.
This method has its origin in \cite{Di93, DGV, U}.
Moreover, the derivation of oscillation decay for solutions of the doubly nonlinear equations is always divided into two parts (see \cite{KA, BDL, BDLS, LS}).
The first part concerns the case when $u$ is near zero, while the second part concerns the case when $u$ is away from zero.
In the first part, the degeneracy plays a major role in the proof. In the second part, the equation behaves like the ordinary
$p$-Laplace type equation. Our
proof follows this approach, but the argument is considerably more delicate in the nonlocal doubly degenerate
setting. Contrary to the local case, it is necessary to address a nonlocal tail term in the study of nonlocal problems.
This motivates us to establish a nonlocal version of the De Giorgi theory. Furthermore, a
difficulty appears in the change of variables in the second part of the proof and the argument becomes much more involved than the one for the
local equation.
In order to overcome this difficulty, we have to establish a
new type of Caccioppoli inequality. In contrast to \cite{KA}, our proof makes no appeal to the expansion of positivity.

This paper is built up as follows. In Section 2, we set up notations and state the main result.
Section 3 establishes the Caccioppoli-type estimate. In section 4, we study the case when $u$ is near zero and derive an oscillation decay estimate
for the weak solution.
Section 5 deals with the case when $u$ is away from zero and we finish the proof of the main theorem in this section.
\section{Statement of the main result}
In this section, we introduce the notations and give the statement of the main
result.
Here and subsequently, $\Omega$ stands for a bounded domain in $\mathbb{R}^n$ with $n\geq2$. For $T>0$, we define a space-time cylinder
$\Omega_T=\Omega\times(0,T)$. Given a point $(x_0,t_0)\in\Omega_T$, we set
$Q_{r,s}(z_0)=B_r(x_0)\times (t_0-s,t_0)$ and $Q_r(z_0)=Q_{r,r^{sp}}(z_0)=B_r(x_0)\times (t_0-r^{sp},t_0)$, where
$B_r(x_0)=\{x\in\mathbb{R}^n:|x-x_0|<r\}$. For $\lambda>0$, we define the scaled cylinder by $Q_r^{(\lambda)}(z_0)=B_r(x_0)\times(t_0-\lambda r^{sp},t_0)$. If the reference point $z_0$ is the origin, then we omit in our notation the point $z_0$ and write $B_r$, $Q_r$ and $Q_r^{(\lambda)}$
for $B_r(0)$, $Q_r(0)$ and $Q_r^{(\lambda)}(0)$.

For a given function $\psi\in L^1(\Omega)$, the fractional Gagliardo seminorm of $\psi$ is given by
\begin{equation*}
[\psi]_{W^{s,p}(\Omega)}=\left(\iint_\Omega\frac{|\psi(x)-\psi(y)|^p}{|x-y|^{n+sp}}\,\mathrm{d}x\mathrm{d}y\right)^\frac{1}{p}.
\end{equation*}
The fractional Sobolev space $W^{s,p}(\Omega)$ is defined by
$W^{s,p}(\Omega)=\{\psi\in L^p(\Omega):[\psi]_{W^{s,p}(\Omega)}<\infty\}$.
Moreover, we define
$W^{s,p}_0(\Omega)=\{\psi\in W^{s,p}(\mathbb{R}^n):\psi=0\ \text{in}\ \mathbb{R}^n\setminus \Omega\}$.
A locally integrable function $f:\Omega\to\mathbb{R}^n$ is in the function space $L_{sp}^{p-1}(\mathbb{R}^n)$, if
\begin{equation*}
\int_{\mathbb{R}^n}\frac{|f(y)|^{p-1}}{(1+|y|)^{n+sp}}\,\mathrm{d}y<\infty.
\end{equation*}
For a fixed measurable function $f:\Omega_T\to\mathbb{R}^{n+1}$ and a cylinder $Q=B_r(x_0)\times I$, we denote the nonlocal parabolic tail by
\begin{equation*}
\text{Tail}_m(f;Q)=\left(\fint_I\left(r^{sp}\int_{\mathbb{R}^n\setminus B_r(x_0)}\frac{|f(y,t)|^{p-1}}{|y-x_0|^{n+sp}}\,\mathrm{d}y\right)^\frac{m}{p-1}\ \mathrm{d}t\right)^\frac{1}{m},
\end{equation*}
where $m>p-1$.
Moreover, we write
\begin{equation*}
\widetilde{\mathrm{Tail}}_m(f;Q)=\left(\int_I\left(\int_{\mathbb{R}^n\setminus B_r(x_0)}\frac{|f(y,t)|^{p-1}}{|y-x_0|^{n+sp}}\,\mathrm{d}y\right)^\frac{m}{p-1}\ \mathrm{d}t\right)^\frac{1}{m}.
\end{equation*}
As pointed out by Byun and Kim \cite{BK}, we know that $f\in L^m(I;L_{sp}^{p-1}(\mathbb{R}^n))$ is equivalent to the tail condition where
$\text{Tail}_m(f;Q)<\infty$ holds for all
$B_r(x_0)\subset\mathbb{R}^n$. The concept of the parabolic tail condition goes back as far as \cite{KW}.

For $T>0$ and a Banach space $X$, we denote by $C(0,T;X)$ the space of continuous functions $g:[0,T]\to X$, equipped with the norm $\|\cdot\|_{L^\infty(0,T;X)}$.
This work considers the doubly nonlinear parabolic equations of the nonlocal type
\begin{equation}\begin{split}\label{LKut}
\partial_t(|u|^{q-1}u)+\mathcal{L}_Ku=0
\end{split}
\end{equation}
in $\Omega_T$. Here, the nonlocal operator $\mathcal{L}_K$
is defined by
\begin{equation}\begin{split}\label{LK}
\mathcal{L}_Ku(x,t)=\mathrm{P}.\mathrm{V}.\int_{\mathbb{R}^n}|u(x,t)-u(y,t)|^{p-2}(u(x,t)-u(y,t))K(x,y,t)\,\mathrm{d}y,
\end{split}
\end{equation}
where $\mathrm{P}.\mathrm{V}.$ stands for the principal value. Throughout the paper, we keep $p>2$ and $0<q<p-1$.
We assume that the kernel $K$ is a measurable symmetric kernel with respect to $x$ and $y$, and satisfies
\begin{equation}\begin{split}\label{kernel}
\frac{\Lambda^{-1}}{|x-y|^{n+sp}}\leq K(x,y,t)\leq\frac{\Lambda}{|x-y|^{n+sp}}
\end{split}
\end{equation}
for almost every $x,y\in\mathbb{R}^n$ uniformly in $t\in(0,T)$ for some $\Lambda\geq1$ and $s\in(0,1)$.
We now give the definition of a weak solution to the nonlocal parabolic equation \eqref{LKut}.
\begin{definition}\label{weak solution}
Let $m>p-1$.
A measurable function $u:\Omega_T\to \mathbb{R}$ is
said to be a local weak solution to \eqref{LKut}-\eqref{kernel} if
\begin{equation*}\begin{split}u\in C_{\loc}(0,T;L_{\loc}^{q+1}(\Omega))\cap L_{\loc}^p(0,T;W_{\loc}^{s,p}(\Omega))
\cap L_{\loc}^m(0,T;L_{sp}^{p-1}(\mathbb{R}^n))
\end{split}
\end{equation*}
and
for every open set
$U\Subset \Omega$ and every subinterval $(t_1,t_2)\subset(-T,0)$
the identity
\begin{equation}\begin{split}\label{weaksolution}
\int_U &|u(\cdot,t)|^{q-1}u(\cdot,t)\varphi(\cdot,t)\,\mathrm{d}x\bigg|_{t=t_1}^{t_2}+\iint_{U\times(t_1,t_2)}-|u|^{q-1}u\partial_t
 \varphi
\,\mathrm {d}x\mathrm {d}t
\\&+\int_{t_1}^{t_2}\iint_{\mathbb{R}^n\times\mathbb{R}^n}K(x,y,t)|u(x,t)-u(y,t)|^{p-2}(u(x,t)-u(y,t))
\\&\qquad\qquad\times(\varphi(x,t)-\varphi(y,t))
\,\mathrm {d}x\mathrm {d}t=0
\end{split}\end{equation}
holds for
any function
$\varphi\in W_{\loc}^{1,q+1}(0,T;L^{q+1}(U))\cap L_{\loc}^p(0,T;W_0^{s,p}(U))$.
 \end{definition}
 The statement that a constant $\gamma$ depends only
on the data means that it can be quantitatively determined a priori only in terms of
$\{n,m,p,q,s,\Lambda\}$. We are now in a position to state our main result.
 \begin{theorem}\label{main1}
Let $p>2$ and $0<q<p-1$.
 Let $u$ be a locally bounded weak solution to \eqref{LKut}-\eqref{kernel}
 in the sense of Definition \ref{weak solution}.
 Then, $u$ is locally H\"older continuous in $\Omega_T$. More precisely, for any $\mathfrak z_0\in\Omega_T$ and let $R>0$ be a fixed number such that $Q_{R}(\mathfrak z_0)\subset\Omega_T$.
 Then, there exist positive constants $\delta<1$, $\alpha<1$, $c<1$ and $\gamma_0>1$, such that the oscillation
estimate
  \begin{equation}\begin{split}\label{theorem1oscillation}
 &\essosc_{Q_{r,cr^{sp}}(z_0)}u\leq \gamma_0\left(\frac{r}{R}\right)^{\alpha}
 \end{split}\end{equation}
 holds for any $z_0\in Q_{\frac{1}{4}R}(\mathfrak z_0)$ and $r<\delta R$. Here, the constant $\alpha$
 depends only upon the data, and the constants $\delta$, $c$ and $\gamma_0$ depend on the data, $R$, $\|u\|_\infty$ and $\mathrm{Tail}_m(|u|;Q_{R}(\mathfrak z_0))$.
 \end{theorem}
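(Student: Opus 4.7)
The plan is to prove the oscillation decay \eqref{theorem1oscillation} by a standard intrinsic-scaling iteration, adapted to the nonlocal setting. Concretely, I would construct a sequence of nested intrinsically scaled cylinders $\{Q_{\rho_j}^{(\lambda_j)}(z_0)\}_{j\ge 0}$ with $\rho_j=\sigma^j\rho_0$ for some $\sigma\in(0,1)$, and a sequence of decreasing upper bounds $\omega_j$ for the oscillation of $u$ on these cylinders, satisfying a recursion $\omega_{j+1}\le\eta\,\omega_j$ with $\eta\in(0,1)$. The natural intrinsic scaling for $\partial_t(|u|^{q-1}u)+\mathcal{L}_Ku=0$ in the doubly degenerate range is $\lambda_j=c_0\,\omega_j^{q+1-p}$; since $q+1-p<0$, these factors are large and grow as $\omega_j$ shrinks, so the tail term must be controlled by $\omega_j$ itself. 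I would therefore redefine, at each step,
\begin{equation*}
\omega_j=\max\Bigl\{\essosc_{Q_{\rho_j}^{(\lambda_j)}(z_0)} u,\;c_{\ast}\,\mathrm{Tail}_m\bigl(u;\,Q_{\rho_j}^{(\lambda_j)}(z_0)\bigr)\Bigr\},
\end{equation*}
so the tail is absorbed additively at every step. Once the recursion $\omega_{j+1}\le\eta\,\omega_j$ is secured, H\"older continuity at exponent $\alpha=\log_\sigma\eta$ follows by a standard elementary iteration lemma, and the passage from intrinsic cylinders to the cylinders $Q_{r,cr^{sp}}(z_0)$ appearing in the statement is cosmetic.

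The inductive step reduces, after rescaling $\omega_j\to 1$, to a single oscillation-reduction result in a fixed reference cylinder $Q_1^{(\lambda)}$ under a tail smallness assumption. Let $\mu^+=\esssup u$, $\mu^-=\essinf u$ on this cylinder, and $\omega=\mu^+-\mu^-$. Following the general philosophy for doubly nonlinear equations, I would split into two alternatives: either (i) $u$'s range straddles zero in a substantial way, e.g. $\min\{|\mu^+|,|\mu^-|\}\ge\tfrac14\omega$ (the \emph{near-zero} case), or (ii) $u$ is essentially of one sign, e.g. $\mu^+\ge\tfrac12\omega$ and $\mu^-\ge-\tfrac14\omega$ (the \emph{away-from-zero} case). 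In (i) the degeneracy of $\partial_t(|u|^{q-1}u)$ at $u=0$ does most of the work: the Caccioppoli estimate from Section~3 applied to the truncations $(u-k)_\pm$, combined with a nonlocal De Giorgi measure-shrinking lemma and a pointwise clustering ($L^\infty$-type) argument, yields $\esssup u\le\mu^+-\gamma\,\omega$ (or the analogous lower bound), which is the required reduction. The nonlocal tail terms generated by the truncation are controlled using the enlarged definition of $\omega_j$ above.

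The main obstacle, and the content of Section~5, is alternative (ii). The difficulty is that the natural substitution $v=u-\mu^-$ does not simplify the time term, because $z\mapsto|z|^{q-1}z$ is not translation invariant, while the kernel integral is. Instead, one has to exploit the fact that on the set $\{u\gtrsim\omega\}$
\begin{equation*}
\partial_t(|u|^{q-1}u)=q|u|^{q-1}\partial_t u\sim\omega^{q-1}\partial_t u,
\end{equation*}
so the equation behaves like a fractional $p$-Laplace equation with an effective time coefficient $\omega^{q-1}$; this is exactly compensated by the intrinsic scaling $\lambda=c_0\omega^{q+1-p}$. The analytic incarnation is the new Caccioppoli-type inequality promised in the introduction: adapted to the one-signed regime, it replaces the time term by a quadratic expression in $u-k$ with weight $\sim\omega^{q-1}$. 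Assuming this inequality, the nonlocal De Giorgi iteration proceeds much as for the fractional $p$-Laplace equation (as in \cite{DZZ, Liao, BK}) to yield the measure-reduction and hence the oscillation-reduction. I expect the derivation of this new Caccioppoli inequality, and the careful tracking of the tail along the De Giorgi iteration based on it, to be the hardest part of the argument, with the selection of admissible test functions that simultaneously linearize the time term and remain compatible with the truncation levels being the key delicate issue.
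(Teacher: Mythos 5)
Your overall plan coincides with the paper's strategy (intrinsic scaling with $\lambda\sim\omega^{q+1-p}$, a near-zero/away-from-zero dichotomy, Caccioppoli plus a nonlocal De Giorgi iteration, and a new Caccioppoli inequality in the one-signed regime), but your tail bookkeeping contains a genuine flaw. Setting $\omega_j=\max\{\essosc_{Q_j}u,\ c_*\mathrm{Tail}_m(u;Q_j)\}$ at every scale cannot produce $\omega_{j+1}\le\eta\,\omega_j$ all the way to zero: because of the normalization $\rho_j^{sp}\int_{\mathbb{R}^n\setminus B_{\rho_j}}|u(y,t)|^{p-1}|y-x_0|^{-n-sp}\,\mathrm{d}y$, the annulus just outside $B_{\rho_j}$ (where $|u|$ is of size $\sup_{Q_R}|u|$, not of size $\essosc$) contributes a quantity of order $\sup_{Q_R}|u|^{p-1}$ uniformly in $j$, so $\mathrm{Tail}_m(u;Q_{\rho_j})$ does not decay as $\rho_j\to0$ and your $\omega_j$ has a positive floor; the recursion stalls and no H\"older exponent comes out. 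The paper avoids this by inserting the tail into $\omega$ only once, at the initial scale (see \eqref{omegaxi0}), and then at each subsequent scale it does not enlarge $\omega_j$ but instead \emph{verifies} the required smallness of the tails of the truncations $(u-\mu_j^\pm)_\pm$, i.e. \eqref{jAltTailall}, by a Minkowski/annular decomposition over all previous nested cylinders, using the already established oscillation decay on those cylinders (Steps 2 and 4 of subsection \ref{nearzeroproof}, and again Step 2 of the analogous iteration for $v$). This inductive tail verification is an essential, nontrivial part of the proof that your scheme replaces with a mechanism that does not close.

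For the away-from-zero alternative you state the correct heuristic, $\partial_t(|u|^{q-1}u)\approx\omega^{q-1}\partial_t u$, but you stop exactly where the real work begins. The paper's actual device is a change of unknown and of time: $\tilde v=u/\mu^-$, $v=\tilde v^{q}$, $t\mapsto t_0+(\mu^-)^{q-p+1}t$, which makes the time term exactly linear ($\partial_t v$) while the nonlocal operator acts on $v^{1/q}$ inside the ball and on $\bar v=\tilde v$ outside it; the new Caccioppoli inequality (Lemma \ref{caclemmatype2}) is then proved for this transformed, only locally defined function, which forces the exponential mollification \eqref{timemollifierexp} to be abandoned in favor of compactly supported convolutions, and the exterior terms produce tails of $\bar v$ that must be related back to the original assumption on $u$ (Lemma \ref{tailsatisfylemma}) before the De Giorgi lemmas and the second iteration can run. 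Your sketch ("a Caccioppoli inequality for $u-k$ with weight $\sim\omega^{q-1}$") neither specifies admissible test functions nor explains how the nonlocal term and its tail survive the linearization, and you explicitly defer this as the "key delicate issue"; as written, the proposal therefore does not contain a proof of the second alternative, which is the main new content of the theorem.
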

 Next, we provide some preliminary lemmas. To start with, we follow the
notation used in \cite{BDL}.
 For $b\in\mathbb{R}$ and $\alpha>0$, we define $\bm{b}^\alpha=|b|^{\alpha-1}b$ if $b\neq0$ and $\bm{b}^\alpha=0$ if $b=0$.
 For $w$, $k\in \mathbb{R}$, we introduce the quantities
\begin{equation}\label{g+-}
g_{\pm}(w,k)=\pm q\int_k^w|s|^{q-1}(s-k)_{\pm}\,\mathrm{d}s.
\end{equation}
We now state the following two lemmas that characterize the quantities $|\bm{b}^\alpha-\bm{a}^\alpha|$ and
$g_{\pm}(w,k)$.
 \begin{lemma}\cite[Lemma 2.1]{BDL}\label{equivalent} There exists a positive constant $\gamma=\gamma(\alpha)$ such that
the inequality
\begin{equation}\label{abequivalent}
\gamma^{-1}|\bm{b}^\alpha-\bm{a}^\alpha|\leq (|a|+|b|)^{\alpha-1}|a-b|\leq\gamma|\bm{b}^\alpha-\bm{a}^\alpha|
\end{equation}
holds for any $a,b\in\mathbb{R}$ and $\alpha>0$.
\end{lemma}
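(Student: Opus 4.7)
The plan is to prove \eqref{abequivalent} by homogeneity and compactness, reducing it to a one-variable claim on a compact interval. Both quantities $|\bm{b}^\alpha-\bm{a}^\alpha|$ and $(|a|+|b|)^{\alpha-1}|a-b|$ are positively homogeneous of degree $\alpha$ under $(a,b)\mapsto(\lambda a,\lambda b)$ with $\lambda>0$, symmetric in $(a,b)$, and invariant under the simultaneous sign change $(a,b)\mapsto(-a,-b)$. Discarding the trivial case $a=b=0$, these symmetries let us assume $b\geq|a|\geq 0$ and then rescale so that $b=1$. Setting $t:=a\in[-1,1]$ and
\begin{equation*}
F(t):=\bigl|1-\bm{t}^\alpha\bigr|,\qquad G(t):=(1+|t|)^{\alpha-1}|1-t|,
\end{equation*}
the claim \eqref{abequivalent} becomes $\gamma^{-1}F(t)\leq G(t)\leq\gamma F(t)$ for all $t\in[-1,1]$.

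Both $F$ and $G$ are continuous and nonnegative on the compact interval $[-1,1]$, and each vanishes exactly at $t=1$ (since $\bm{t}^\alpha=1$ forces $t=1$). A first order expansion around $t=1$ gives
\begin{equation*}
F(t)=\alpha(1-t)+o(1-t),\qquad G(t)=2^{\alpha-1}(1-t)+o(1-t),
\end{equation*}
so the ratio $F/G$ extends continuously to all of $[-1,1]$ with value $\alpha/2^{\alpha-1}\in(0,\infty)$ at $t=1$. By the extreme value theorem the extended ratio attains a positive minimum and a finite maximum on $[-1,1]$, both depending only on $\alpha$, which yields the desired constant $\gamma=\gamma(\alpha)$.

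The only truly substantive step is matching the vanishing order of $F$ and $G$ at $t=1$; everything else is soft compactness. If an explicit, case-analytic proof is preferred instead, one can split by the signs of $a$ and $b$: when $a$ and $b$ share a sign the mean value theorem applied to $s\mapsto\bm{s}^\alpha$ on the interval between $a$ and $b$ produces an intermediate point that is comparable to $|a|+|b|$, while in the opposite-sign case the inequality reduces to the classical two-sided comparison of $|a|^\alpha+|b|^\alpha$ with $(|a|+|b|)^\alpha$ controlled by factors $2^{\pm|\alpha-1|}$. Either route produces the same dependence $\gamma=\gamma(\alpha)$.
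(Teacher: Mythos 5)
Your argument is correct. Note that the paper itself gives no proof of this lemma: it is quoted verbatim from \cite[Lemma 2.1]{BDL}, so there is nothing internal to compare against, and what you supply is a self-contained substitute. Your main route (degree-$\alpha$ homogeneity, symmetry in $(a,b)$, invariance under $(a,b)\mapsto(-a,-b)$, reduction to $b=1$, $t=a\in[-1,1]$, and then continuity of the ratio $F/G$ on the compact interval after checking that both functions vanish only at $t=1$ to the same first order, with limit $\alpha/2^{\alpha-1}$) is complete and sound; the reduction steps all preserve both sides of \eqref{abequivalent}, and $\bm{t}^\alpha=\operatorname{sign}(t)|t|^\alpha$ is continuous for every $\alpha>0$, so the extreme value theorem applies. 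The trade-off against the direct case analysis used in the literature is the usual one: compactness is soft and short but produces a non-explicit $\gamma(\alpha)$, whereas splitting into $ab\geq0$ and $ab<0$ yields explicit constants of the form $2^{|\alpha-1|}$ (the opposite-sign case being exactly the comparison of $|a|^\alpha+|b|^\alpha$ with $(|a|+|b|)^\alpha$, as you say). One small caveat on your sketched alternative: the assertion that the mean value theorem ``produces an intermediate point comparable to $|a|+|b|$'' is not automatic from the location of $\xi\in(a,b)$ alone (for $0<a\ll b$ the interval contains points far below $b$); the clean way is to bound the difference quotient $(b^\alpha-a^\alpha)/(b-a)$ directly from above and below by constants times $(a+b)^{\alpha-1}$, e.g.\ by splitting according to whether $a\leq b/2$, which is essentially the same computation and again gives $\gamma=\gamma(\alpha)$. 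This does not affect your primary proof, which stands as written.
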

\begin{lemma}\cite[Lemma 2.2]{BDL}\label{equivalent} There exists a positive constant $\gamma=\gamma(q)$ such that
the inequality
\begin{equation}\label{g+-equivalent}
\gamma^{-1}(|w|+|k|)^{q-1}(w-k)_{\pm}^2\leq
g_{\pm}(w,k)\leq\gamma(|w|+|k|)^{q-1}(w-k)_{\pm}^2
\end{equation}
holds for any $k$ and $w\in\mathbb{R}$.
\end{lemma}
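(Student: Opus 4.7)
The plan is to reduce Lemma 2.2 to an elementary one-variable integral estimate through a change of variables, and then carry out a short case analysis on the signs of $w$ and $k$. First note that $g_+(w,k)=0$ whenever $w\le k$, so the inequality is trivial there; by the identity $g_-(w,k)=g_+(-w,-k)$, obtained from the substitution $s\mapsto -s$ in the defining integral \eqref{g+-} together with $(-u)_-=u_+$, the statement for $g_-$ reduces to that for $g_+$. Assuming $w>k$, the substitution $s=k+t(w-k)$, $t\in[0,1]$, yields
\[g_+(w,k)=q(w-k)^2\int_0^1 t\,|(1-t)k+tw|^{q-1}\,dt,\]
so the lemma is equivalent to the sharp two-sided bound
\[c(q)(|w|+|k|)^{q-1}\le\int_0^1 t\,|(1-t)k+tw|^{q-1}\,dt\le C(q)(|w|+|k|)^{q-1},\]
which I will prove by case analysis on the signs of $w$ and $k$.

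In the same-sign case ($wk\ge 0$), the affine function $(1-t)k+tw$ keeps constant sign on $[0,1]$, so $|(1-t)k+tw|=(1-t)|k|+t|w|$, and setting $a:=|k|$, $b:=|w|$ the problem reduces to showing $\int_0^1 t\bigl((1-t)a+tb\bigr)^{q-1}\,dt\asymp(a+b)^{q-1}$ for $a,b\ge 0$. This follows from elementary pointwise estimates: for $q\ge 1$, use $(1-t)a+tb\le a+b$ for the upper bound, and the observation that on at least one of $[0,1/2]$ or $[1/2,1]$ one has $(1-t)a+tb\ge\max(a,b)/2\ge(a+b)/4$ for the lower bound; for $0<q<1$ the roles are reversed, with the upper bound obtained from $(1-t)a+tb\ge tb$ (or $\ge(1-t)a$) combined with $\int_0^1 t(tb)^{q-1}dt=b^{q-1}/(q+1)$, and the lower bound from $(1-t)a+tb\le a+b$.

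The essential case is the opposite-sign one, $k<0<w$: here $w-k=|w|+|k|=:M$ and the affine function $(1-t)k+tw$ vanishes exactly at $t_0:=|k|/M\in(0,1)$, so $(1-t)k+tw=M(t-t_0)$ and
\[\int_0^1 t\,|(1-t)k+tw|^{q-1}\,dt=M^{q-1}\int_0^1 t\,|t-t_0|^{q-1}\,dt.\]
The factor $(|w|+|k|)^{q-1}$ is thereby extracted exactly, and it remains to bound the model integral $I(t_0):=\int_0^1 t\,|t-t_0|^{q-1}\,dt$ uniformly for $t_0\in(0,1)$ by constants depending only on $q$. Splitting at $t_0$ and substituting in each piece yields the explicit formula
\[I(t_0)=\frac{t_0^{q+1}}{q(q+1)}+\frac{t_0(1-t_0)^q}{q}+\frac{(1-t_0)^{q+1}}{q+1};\]
the upper bound is immediate (each term is at most a constant depending on $q$), and the lower bound follows from the observation that either $t_0\ge 1/2$ or $1-t_0\ge 1/2$, yielding a universal positive lower bound on the first or on the third summand.

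The principal obstacle will be the opposite-sign case when $0<q<1$, since then $|(1-t)k+tw|^{q-1}$ has an integrable singularity at the interior point $t_0$. The clean identity $(1-t)k+tw=M(t-t_0)$ is what allows the singular $t_0$-dependence to be factored out cleanly, reducing the remaining task to the elementary explicit model integral $I(t_0)$ whose two-sided bound is immediate.
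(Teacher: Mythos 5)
Your proof is correct and complete: the reduction of $g_-$ to $g_+$ via $s\mapsto -s$, the substitution $s=k+t(w-k)$ that factors out $(w-k)^2$, and the sign case analysis (including the exact extraction of $(|w|+|k|)^{q-1}$ in the opposite-sign case via $(1-t)k+tw=M(t-t_0)$ and the explicit model integral $I(t_0)$) all check out, with constants depending only on $q$ as required. The paper does not prove this lemma but merely cites \cite[Lemma 2.2]{BDL}, so there is no in-paper argument to compare against; your write-up is a valid self-contained substitute for that citation. The only (cosmetic) caveat is the degenerate point $w=k=0$ when $q<1$, where $(|w|+|k|)^{q-1}(w-k)_{\pm}^2$ must be read as $0$ by convention.
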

The next lemma asserts that the parabolic fractional Sobolev space is embedded in the
mixed-norm Lebesgue space.
\begin{lemma}\cite[Lemma 2.2]{BK}\label{embedding} Let $1<m_1\leq\max\{p,2\}$. Suppose that $\bar q$ and $\bar r$ are the positive constants
satisfying
\begin{equation*}\bar r>p\qquad\text{and}\qquad\frac{1}{\bar q}+\frac{1}{\bar r}\left(\frac{sp}{n}+\frac{p}{m_1}-1\right)=\frac{1}{m_1}.
\end{equation*}
Then, there exists a constant $\gamma>0$, such that
\begin{equation*}\begin{split}
\|v\|_{L^{\bar q,\bar r}(B_R\times I)}^\frac{n\bar q\bar r}{sp\bar q+n\bar r}\leq \gamma\int_I[v(\cdot,t)]_{W^{s,p}(B_R)}^p\,\mathrm{d}t
+\gamma R^{-sp}\|v\|_{L^p(I;L^p(B_R))}^p+\gamma \esssup_{t\in I}\|v(\cdot,t)\|_{L^{m_1}(B_R)}^{m_1}.
\end{split}
\end{equation*}
\end{lemma}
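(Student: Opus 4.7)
The plan is to combine a slicewise-in-time fractional Sobolev inequality on the ball, a H\"older interpolation in space between $L^{m_1}$ and a large Lebesgue exponent, and a Young inequality that decouples the essential supremum from the Gagliardo energy.

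For each fixed $t\in I$, I start from the standard fractional Sobolev inequality
$$\|v(\cdot,t)\|_{L^{p^{*}}(B_R)}^{p} \leq \gamma[v(\cdot,t)]_{W^{s,p}(B_R)}^{p} + \gamma R^{-sp}\|v(\cdot,t)\|_{L^{p}(B_R)}^{p},$$
with $p^{*}=np/(n-sp)$ when $sp<n$; the borderline case $sp=n$ and the supercritical case $sp>n$ are handled analogously, with $p^{*}$ replaced by a sufficiently large finite exponent or by $L^{\infty}$. I then interpolate in space between $L^{m_{1}}$ and $L^{p^{*}}$:
$$\|v(\cdot,t)\|_{L^{\bar q}(B_R)} \leq \|v(\cdot,t)\|_{L^{m_{1}}(B_R)}^{\theta}\|v(\cdot,t)\|_{L^{p^{*}}(B_R)}^{1-\theta}, \qquad \frac{1}{\bar q} = \frac{\theta}{m_{1}} + \frac{1-\theta}{p^{*}}.$$
A short manipulation using the scaling hypothesis $\frac{1}{\bar q} + \frac{1}{\bar r}\bigl(\frac{sp}{n}+\frac{p}{m_{1}}-1\bigr) = \frac{1}{m_{1}}$ forces $\theta = 1 - p/\bar r$, which belongs to $(0,1)$ because $\bar r > p$.

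Raising the interpolation to the $\bar r$-th power, integrating over $I$, and extracting the $L^{m_{1}}$ factor via the essential supremum in $t$ gives
$$\|v\|_{L^{\bar q,\bar r}(B_R\times I)}^{\bar r} \leq M^{\bar r-p}\int_{I}\|v(\cdot,t)\|_{L^{p^{*}}(B_R)}^{p}\,\mathrm{d}t \leq \gamma M^{\bar r-p}\,\mathcal{I},$$
where $M := \esssup_{t\in I}\|v(\cdot,t)\|_{L^{m_{1}}(B_R)}$ and $\mathcal{I} := \int_{I}[v(\cdot,t)]_{W^{s,p}(B_R)}^{p}\,\mathrm{d}t + R^{-sp}\|v\|_{L^{p}(I;L^{p}(B_R))}^{p}$. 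The same scaling identity rearranges to
$$\frac{n\bar q\bar r}{sp\bar q + n\bar r} = \frac{m_{1}\bar r}{\bar r - p + m_{1}} =: \alpha,$$
which is precisely the exponent on the left-hand side of the target inequality.

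Finally, I raise the previous estimate to the power $\alpha/\bar r$ and apply Young's inequality with the conjugate pair $\mu_{1} = \frac{\bar r-p+m_{1}}{\bar r-p}$, $\mu_{2} = \frac{\bar r-p+m_{1}}{m_{1}}$, chosen so that $\mu_1^{-1}+\mu_2^{-1}=1$, $(\bar r-p)\alpha\mu_{1}/\bar r = m_{1}$, and $\alpha\mu_{2}/\bar r = 1$; this converts $M^{\alpha(\bar r-p)/\bar r}\mathcal{I}^{\alpha/\bar r}$ into $\gamma M^{m_{1}} + \gamma\mathcal{I}$, which is the claimed bound after expanding $\mathcal{I}$. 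The main obstacle is the exponent bookkeeping: one must extract both the interpolation identity $\theta = 1-p/\bar r$ and the left-hand exponent $\alpha$ from the single hypothesis on $\bar q$ and $\bar r$, after which the remaining Sobolev and Young steps are routine.
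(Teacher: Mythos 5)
The paper does not prove this statement at all: it is imported verbatim from \cite[Lemma 2.2]{BK}, so there is no internal proof to compare with. Judged on its own, your argument is the standard one and the exponent bookkeeping checks out in the case $sp<n$: with $p^{*}=np/(n-sp)$ the hypothesis on $(\bar q,\bar r)$ is exactly the H\"older interpolation identity with $\theta=1-p/\bar r\in(0,1)$, so $(1-\theta)\bar r=p$ and the sup-in-time factor comes out with power $\bar r-p$; the identity $\frac{n\bar q\bar r}{sp\bar q+n\bar r}=\frac{m_1\bar r}{\bar r-p+m_1}$ follows from $\frac{1}{\alpha}=\frac{sp}{n\bar r}+\frac{1}{\bar q}$ and the hypothesis; and your Young exponents $\mu_1,\mu_2$ are conjugate and exceed $1$ precisely because $\bar r>p$, so the final step is legitimate.

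The only genuine loose end is the one-line dismissal of $sp\geq n$, which is not vacuous here since $n\geq2$, $p>2$ and $s\in(0,1)$ allow $sp\geq n$. In that regime the replacement exponent is no longer $np/(n-sp)$, so the clean identities $\theta=1-p/\bar r$ and $(1-\theta)\bar r=p$ fail: the Sobolev term then appears with a power different from $p$, it cannot simply be integrated in time, and the slicewise Sobolev inequality on $B_R$ acquires extra $R$-dependent factors. The strategy still adapts (one absorbs the excess power by a further use of the $\esssup_t\|v\|_{L^{m_1}}$ bound, or reduces to a smaller $s'$ with $s'p<n$ at the price of tracking the resulting constants), but this requires its own bookkeeping and is not literally ``analogous''; as written, that case is asserted rather than proved.
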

Let $k$ be a fixed real number. For a function $v\in L_{\loc}^1(\Omega_T)$, the truncations are defined by
\begin{equation*}\begin{split}&(v-k)_+=\max\{v-k;0\}\quad\text{and}\quad(v-k)_-=\max\{-(v-k);0\}.\end{split}\end{equation*}
The next lemma provides a comparison for the fractional Gagliardo seminorms of different truncations.
\begin{lemma}\cite[Lemma 2.3]{BK}\label{b<a} Let $f\in W^{s,p}(B_R)$ and $b\leq a$. Then, we have
\begin{equation*}\begin{split}
[(f-b)_-]_{W^{s,p}(B_R)}\leq [(f-a)_-]_{W^{s,p}(B_R)}.
\end{split}
\end{equation*}
\end{lemma}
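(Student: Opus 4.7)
The plan is to reduce the inequality to a pointwise comparison of the integrands in the Gagliardo double integral, and then integrate. Concretely, I will prove that for every $x,y \in B_R$ and every $b \leq a$,
\begin{equation*}
\bigl|(f(x)-b)_- - (f(y)-b)_-\bigr| \;\leq\; \bigl|(f(x)-a)_- - (f(y)-a)_-\bigr|. \qquad (\ast)
\end{equation*}
Once $(\ast)$ is in hand, I raise both sides to the $p$-th power, divide by $|x-y|^{n+sp}$, integrate over $B_R \times B_R$, and take $p$-th roots to conclude $[(f-b)_-]_{W^{s,p}(B_R)} \leq [(f-a)_-]_{W^{s,p}(B_R)}$.

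To establish $(\ast)$, I would work at the level of the real-variable function $\phi_c(t) := (t-c)_- = (c-t)_+$. The point is to compare $\phi_b$ and $\phi_a$. Since $b \leq a$, an elementary case split on whether $t \geq a$, $b \leq t < a$, or $t < b$ shows that the difference $g(t) := \phi_a(t) - \phi_b(t)$ equals $a-b$ on $(-\infty,b]$, decreases linearly from $a-b$ to $0$ on $[b,a]$, and vanishes on $[a,\infty)$. In particular $g$ is non-negative and non-increasing on $\mathbb{R}$. Since $\phi_b$ is itself non-increasing (it is $(c-t)_+$), the decomposition $\phi_a = \phi_b + g$ expresses $\phi_a$ as the sum of two non-increasing functions.

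Now fix $x,y$ and, without loss of generality, assume $f(x) \leq f(y)$. Then
\begin{equation*}
\phi_a(f(x)) - \phi_a(f(y)) \;=\; \bigl[\phi_b(f(x)) - \phi_b(f(y))\bigr] + \bigl[g(f(x)) - g(f(y))\bigr],
\end{equation*}
and each of the two bracketed quantities is non-negative by monotonicity of $\phi_b$ and $g$. Dropping the non-negative $g$-term and taking absolute values gives precisely $(\ast)$. The argument is symmetric in $x,y$, so the assumption $f(x) \leq f(y)$ is harmless.

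There is no substantive obstacle: the whole content lies in the pointwise inequality $(\ast)$, which follows from the observation that raising the truncation level $c$ only enlarges the image of $(\cdot-c)_-$ in a monotone-preserving way. The integration step is completely routine once $(\ast)$ is in place, and no tail term or nonlocal subtlety enters because the integration domain is $B_R \times B_R$ rather than $\mathbb{R}^n \times \mathbb{R}^n$.
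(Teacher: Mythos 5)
Your argument is correct, and the pointwise inequality $(\ast)$ is indeed the whole content: with $b\leq a$ the function $g(t)=(t-a)_--(t-b)_-$ equals $a-b$ for $t\leq b$, equals $a-t$ on $[b,a]$, and vanishes for $t\geq a$, so it is non-negative and non-increasing, and your decomposition $(t-a)_-=(t-b)_-+g(t)$ together with the monotonicity of both summands gives $|(f(x)-b)_--(f(y)-b)_-|\leq|(f(x)-a)_--(f(y)-a)_-|$; integrating over $B_R\times B_R$ finishes the proof. This is, however, a different route from the one taken in the paper: there the core inequality is not reproved but simply quoted from \cite[Lemma 2.3]{BK}, which is stated under the restriction $0\leq b\leq a$, and the paper's own contribution is only the observation that the general case $-\infty<b\leq a$ follows by translating, i.e.\ replacing $f$ by $f+|b|$ and the levels by $b+|b|\leq a+|b|$, since the truncations and the Gagliardo seminorm are invariant under this shift. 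Your proof is more self-contained and makes the sign restriction irrelevant from the start, at the cost of redoing the elementary case analysis; the paper's route is shorter on the page but leans on the cited lemma. A small remark: your pointwise step can be compressed by noting the identity $(s-b)_-=\bigl((s-a)_--(a-b)\bigr)_+$, so that $(\cdot-b)_-$ is the composition of $(\cdot-a)_-$ with the $1$-Lipschitz map $r\mapsto(r-(a-b))_+$, which yields $(\ast)$ in one line.
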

We remark that the condition for $a$ and $b$ stated in \cite[Lemma 2.3]{BK} is just $0\leq b\leq a$. However, this condition can be relaxed to
$-\infty<b\leq a$. It can be easily seen that
if $f\in W^{s,p}(B_R)$, then $f+|b|\in W^{s,p}(B_R)$. Since
$0\leq b+|b|\leq a+|b|$, we have
\begin{equation*}\begin{split}
&[(f-b)_-]_{W^{s,p}(B_R)}=[(f+|b|-(b+|b|))_-]_{W^{s,p}(B_R)}
\\&\leq[(f+|b|-(a+|b|))_-]_{W^{s,p}(B_R)}
=[(f-a)_-]_{W^{s,p}(B_R)},
\end{split}
\end{equation*}
which proves the desired inequality.
\section{Caccioppoli inequality}
In this section, we establish a Caccioppoli inequality
for the weak solutions.
Let $R>0$, $S>0$ and $z_0=(x_0,t_0)$ be a fixed point. Assume that $Q_{R,S}(z_0)\subset\Omega_T$. We denote by
$\varphi$ a piecewise smooth function in $Q_{R,S}(z_0)$ such that
\begin{equation}\label{def zeta}0\leq\varphi\leq1, \quad|D\varphi|<\infty\quad\text{and}\quad\varphi=0\quad\text{
on}\quad B_R(x_0)\setminus B_{(1-\sigma)R}(x_0),\end{equation}
where $\sigma\in(0,1)$. The next Lemma establishes the desired Caccioppoli-type inequality.
\begin{lemma}\label{caclemma}
Let $u$ be a weak solution to \eqref{LKut}-\eqref{kernel}
 in the sense of Definition \ref{weak solution}.
 There exists a positive constant
$\gamma$ depending only upon the data, such that for every piecewise smooth cutoff function
$\varphi$ satisfying \eqref{def zeta}, there holds
\begin{equation}\begin{split}\label{Cacinequality}
&\esssup_{t_0-S<t<t_0}\int_{B_R(x_0)\times\{t\}}g_{\pm}(u,k)\varphi^p \,\mathrm {d}x
\\&+\iint_{Q_{R,S}(z_0)}(u-k)_\pm(x,t)\varphi(x,t)^p\left(\int_{B_R(x_0)}\frac{(u-k)_\mp(y,t)^{p-1}
}{|x-y|^{n+sp}}\,\mathrm{d}y\right)\,\mathrm {d}x\mathrm {d}t\\
&+\int_{t_0-S}^{t_0}\iint_{B_R(x_0)\times B_R(x_0)}\min\left\{\varphi(x,t),\varphi(y,t)\right\}^p
\\&\qquad\qquad\qquad\qquad\times\frac{|(u-k)_\pm(x,t)-(u-k)_\pm(y,t)|^p}{|x-y|^{n+sp}}\,\mathrm {d}x\mathrm {d}y\mathrm {d}t
\\
&\leq \int_{B_R(x_0)\times\{t_0-S\}}g_{\pm}(u,k)\varphi^p \,\mathrm {d}x+\gamma
\iint_{Q_{R,S}(z_0)} g_{\pm}(u,k)|\partial_t\varphi^p|\,\mathrm {d}x\mathrm {d}t
\\&+\gamma\int_{t_0-S}^{t_0}\iint_{B_R(x_0)\times B_R(x_0)}\max\left\{(u-k)_\pm(x,t),(u-k)_\pm(y,t)\right\}^p
\\&\qquad\qquad\qquad\qquad \times\frac{|\varphi(x,t)-\varphi(y,t)|^p}{|x-y|^{n+sp}}
\,\mathrm {d}x\mathrm {d}y\mathrm {d}t
\\&
+\gamma \sigma^{-n-sp}\int_{t_0-S}^{t_0}\int_{\mathbb{R}^n\setminus B_R(x_0)}\int_{B_R(x_0)}
\frac{(u-k)_\pm(y,t)^{p-1}}{|y-x_0|^{n+sp}}
(u-k)_\pm(x,t)\varphi(x,t)^p\,\mathrm {d}x\mathrm {d}y\mathrm {d}t.
\end{split}\end{equation}
\end{lemma}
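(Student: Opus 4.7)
The plan is to mollify the weak formulation in time (for instance by Steklov averaging) and test it with $\pm(u-k)_\pm\varphi^p$, after which one takes the limit in the regularization. Because $\partial_t(|u|^{q-1}u)\cdot(u-k)_\pm=\pm\partial_t g_\pm(u,k)$, a direct computation from the definition \eqref{g+-}, the parabolic part of the identity gives, upon integrating in time against a cutoff supported in $(t_0-S,t_0)$ and passing to $\esssup_t$ at the upper endpoint, the boundary term $\int g_\pm(u,k)\varphi^p\,\mathrm dx$ on the left together with the initial slice and the $g_\pm(u,k)|\partial_t\varphi^p|$ term on the right.

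The nonlocal integrand is then split over $(B_R\times B_R)\cup(B_R\times(\mathbb R^n\setminus B_R))\cup((\mathbb R^n\setminus B_R)\times B_R)$, the remaining piece vanishing because $\varphi$ is supported in $B_{(1-\sigma)R}$. On the diagonal block $B_R\times B_R$ I would invoke the classical pointwise algebraic inequality in the spirit of Di Castro--Kuusi--Palatucci: a four-case analysis in the signs of $u(x)-k$ and $u(y)-k$, combined with the superadditivity $(a+b)^{p-1}\geq a^{p-1}+b^{p-1}$ valid for $p\geq 2$, furnishes from below the Gagliardo-type term with the $\min\{\varphi(x),\varphi(y)\}^p$ weight \emph{and} the cross term $(u-k)_\pm(x)\varphi(x)^p(u-k)_\mp(y)^{p-1}$ that both appear on the left of \eqref{Cacinequality}. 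The cutoff-error carrying $|\varphi(x)-\varphi(y)|^p$ is separated out by Young's inequality and absorbed on the right, which produces the third term on the right-hand side of \eqref{Cacinequality}.

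For the off-diagonal region $B_R\times(\mathbb R^n\setminus B_R)$ one has $\varphi(y)=0$, so only $|u(x)-u(y)|^{p-2}(u(x)-u(y))(u-k)_\pm(x)\varphi(x)^p$ survives. In the $+$ case the sole harmful sign occurs when $u(y)>u(x)\geq k$, and there $|u(x)-u(y)|^{p-1}\leq(u-k)_+(y)^{p-1}$; the elementary geometric estimate $|x-y|\geq\sigma|y-x_0|$, valid for $x\in\supp\varphi\subset B_{(1-\sigma)R}$ and $y\in\mathbb R^n\setminus B_R$, then produces exactly the last term on the right-hand side of \eqref{Cacinequality} with its $\sigma^{-n-sp}$ factor. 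The $-$ case is symmetric.

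The main obstacle is the algebraic inequality on the diagonal block: the four sign cases must be arranged so as to yield simultaneously the Gagliardo seminorm of $(u-k)_\pm$, the mixed-sign product with $(u-k)_\mp^{p-1}$ (which is characteristic of the sign-changing theory and drives the later degenerate/non-degenerate dichotomy in the oscillation decay), and the cutoff error in a form amenable to absorption. A secondary technical point is justifying the time mollification and passing to the limit, which is handled as in \cite{BDL, BDLS} using the regularity $u\in C_\loc(0,T;L_\loc^{q+1}(\Omega))$ granted by Definition \ref{weak solution}.
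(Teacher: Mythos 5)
Your proposal is correct and follows essentially the same route as the paper: regularize in time, test with the truncation times $\varphi^p$, extract the $g_\pm$ terms from the monotone structure of $s\mapsto|s|^{q-1}s$, and split the nonlocal form into the diagonal block $B_R\times B_R$ (where the four-case algebraic inequality yields both the Gagliardo term with the $\min\{\varphi(x),\varphi(y)\}^p$ weight and the cross term with $(u-k)_\mp^{p-1}$, the cutoff error being absorbed via Young) and the tail region where $|x-y|\geq\sigma|y-x_0|$ produces the $\sigma^{-n-sp}$ factor. The only nuance is that the paper carries out the time step with the exponential mollification $[u]_{\bar h}$ and the test function $([u]_{\bar h}-k)_\pm\varphi^p\zeta_\epsilon$, i.e.\ truncating the mollified solution so that the monotonicity of $\phi(s)=|s|^{q-1}s$ makes the commutator term vanish as $h\downarrow0$, rather than Steklov averaging of $(u-k)_\pm\varphi^p$; this is precisely the device of \cite{BDL, BDLS} that you invoke, and the elliptic part is, as in the paper, the argument of \cite{BK} and \cite{Liao}.
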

\begin{proof}
The proof of \eqref{Cacinequality} follows in a similar manner as the arguments in \cite[Proposition 3.1]{BDL}, \cite[Lemma 3.2]{BGK2},
\cite[Lemma 2.5]{BK} and \cite[Proposition 2.1]{Liao},
and we only focus on the difficulties that really arise for equation \eqref{LKut}-\eqref{kernel}.
To this end, we only treat the case of $(u-k)_+$ as the other case is similar.
First, we introduce the time mollification
\begin{equation}\begin{split}\label{timemollifierexp}
[u]_{\bar h}(x,t)=\tfrac{1}{h}\int_t^Te^\frac{t-\tau}{h}u(x,\tau)\,\mathrm{d}\tau.
\end{split}\end{equation}
For any fixed $t_1\in (t_0-S,t_0)$, we introduce a Lipschitz function $\zeta_\epsilon(t)$ satisfying
$\zeta_\epsilon(t)=1$ in $(t_0-S+\epsilon,t_1-\epsilon)$, $\zeta_\epsilon(t)=0$ in $(-\infty,t_0-S]\cup [t_0,+\infty)$, $\partial_t\zeta_\epsilon(t)
=1/\epsilon$ in $(t_0-S,t_0-S+\epsilon)$ and $\partial_t\zeta_\epsilon(t)
=-1/\epsilon$ in $(t_1-\epsilon,t_1)$. In the weak formulation \eqref{weaksolution}, we choose the testing function
\begin{equation*}\begin{split}
\varphi_h=([u]_{\bar h}-k)_+\varphi^p\zeta_\epsilon.
\end{split}\end{equation*}
The first two terms in \eqref{weaksolution} can be decomposed as
\begin{equation*}\begin{split}
&\int_{t_0-S}^{t_1}\int_{B_R(x_0)}-|u|^{q-1}u\partial_t
 \varphi_h
\,\mathrm {d}x\mathrm {d}t
\\=&\int_{t_0-S}^{t_1}\int_{B_R(x_0)}\left[\phi([u]_{\bar h})-\phi(u)\right]\partial_t
 \varphi_h
\,\mathrm {d}x\mathrm {d}t
\\&+\int_{t_0-S}^{t_1}\int_{B_R(x_0)}-\phi([u]_{\bar h})\partial_t
 \varphi_h\,\mathrm {d}x\mathrm {d}t=:T_1+T_2,
\end{split}\end{equation*}
where we set $\phi(s)=|s|^{q-1}s$.
We first consider the estimate of $T_1$.
Noting that $\phi(s)$ is an increasing function, $\partial_t [u]_{\bar h}=h^{-1}([u]_{\bar h}-u)$
and $|\phi([u]_{\bar h})-\phi(u)|\leq \gamma(q)(|[u]_{\bar h}|+|u|)^{q-1}|[u]_{\bar h}-u|$,
we get
\begin{equation*}\begin{split}
T_1\geq -\gamma\int_{t_0-S}^{t_1}\int_{B_R(x_0)} (|[u]_{\bar h}|+|u|)^{q-1}|[u]_{\bar h}-u|([u]_{\bar h}-k)_+\partial_t
 (\varphi^p\zeta_\epsilon)
\,\mathrm {d}x\mathrm {d}t\to 0,
\end{split}\end{equation*}
as $h\downarrow0$. To estimate $T_2$, we observe that $\partial_tg_+([u]_{\bar h},k)=q(\partial_t [u]_{\bar h})|[u]_{\bar h}|^{q-1}([u]_{\bar h}-k)_+$.
Integrating by parts, we obtain
\begin{equation*}\begin{split}
T_2&=-\int_{t_0-S}^{t_1}\int_{B_R(x_0)}g_+([u]_{\bar h},k)
 \zeta_\epsilon\partial_t\varphi^p\,\mathrm {d}x\mathrm {d}t
 \\&\quad-\int_{t_0-S}^{t_1}\int_{B_R(x_0)}g_+([u]_{\bar h},k)
\varphi^p\partial_t\zeta_\epsilon\,\mathrm {d}x\mathrm {d}t.
\end{split}\end{equation*}
At this point, we pass to the limit $h\downarrow0$ and then pass to the limit $\epsilon\downarrow0$. This leads to
\begin{equation*}\begin{split}
\lim_{\epsilon\to0}\lim_{h\to0}&T_2=-\int_{t_0-S}^{t_1}\int_{B_R(x_0)}g_+(u,k)\partial_t\varphi^p\,\mathrm {d}x\mathrm {d}t
\\&+\int_{B_R(x_0)}[g_+(u,k)\varphi^p](\cdot,t_1)\,\mathrm{d}x-\int_{B_R(x_0)}[g_+(u,k)\varphi^p](\cdot,t_0-S)\,\mathrm{d}x.
\end{split}\end{equation*}
The rest of the proof is similar to that of \cite[Lemma 2.5]{BK} and \cite[Proposition 2.1]{Liao}, and so is omitted.
\end{proof}
We now turn our attention to the proof of Theorem \ref{main1}.
Without loss of generality, we assume that $\mathfrak z_0=(0,0)$. We fix a constant $\xi_0\in(0,1)$, which will be chosen later in a universal way.
For a fixed $R>0$ and a cylinder $Q_R=B_R\times (-R^{sp},0)$, we introduce a quantity
\begin{equation}\begin{split}\label{omegaxi0}
\omega=4\xi_0^{-1}\sup_{Q_R}|u|+4\text{Tail}_m(|u|;Q_R)^\frac{m}{m-(p-q-1)}.
\end{split}\end{equation}
Next, we define $\theta=(\frac{1}{4}\omega)^{q+1-p}$. Let $A>1$ be a constant which will be determined later.
We now fix the point $z_0=(x_0,t_0)\in Q_{\frac{1}{4}R}$ and choose
\begin{equation}\begin{split}\label{firstrho}
\rho_0=\min\left\{\tfrac{1}{4}R,(1-4^{-sp})^\frac{1}{sp}\theta^{-\frac{1}{sp}}
A^{-\frac{1}{sp}}R\right\}.\end{split}\end{equation}
We abbreviate $Q_0$ to $Q_{\rho_0}^{(A\theta)}(z_0)$. It follows from \eqref{firstrho} that $Q_0\subseteq Q_R$.
Moreover, we set $\mu_+=\esssup_{Q_0}u$ and $\mu_-=\essinf_{Q_0}u$. It is easily seen that
$\essosc_{Q_0}u=\mu_+-\mu_-\leq \omega$.
Henceforth, we set
\begin{equation}\begin{split}\label{kappa}
\kappa=\frac{sp}{n}\left(\frac{m-(p-1)}{m}\right).
\end{split}\end{equation}
In order to prove the H\"older estimate \eqref{theorem1oscillation}, we will prove the following assertion. There exist a sequence
of numbers $\{\omega_j\}$ and
a family of nested and shrinking cylinders $Q_j$, with the vertex at $z_0$, such that $\essosc_{Q_j}u\leq\omega_j$.
Here, the sequence $\{\omega_j\}$ converges to zero and $Q_j$ shrink to the point $z_0$, as $j$ tends to infinity.
\section{Reduction of oscillation near zero}
In this section, we proceed to the proof of \eqref{theorem1oscillation}.
From \eqref{omegaxi0}, we see that $\mu_-\leq\xi_0\omega$ and $\mu_+\geq-\xi_0\omega$.
In this case, we find that $|\mu_\pm|\leq 2\omega$, since $\xi_0<1$. Moreover, we assume that $\mu_+-\mu_->\frac{1}{2}\omega$.
This assumption yields that $\mu_+\geq\frac{1}{4}\omega$ or $\mu_-\leq-\frac{1}{4}\omega$.
In this section, we only consider the case $\mu_+\geq\frac{1}{4}\omega$, since the other case can be treated analogously.
Let $\rho<\rho_0$ be a fixed radius. We now consider two complementary cases.
For a fixed constant $\nu_0>0$, we see that either
  \begin{itemize}
 \item[$\bullet$]
 \textbf{The first alternative}. There exists $t_0-(A-1)\theta\rho^{sp}\leq \bar t\leq t_0$ such that
\begin{equation}\label{1st}\left|\left\{(x,t)\in Q_\rho^{(\theta)}(x_0,\bar t)
:u\leq \mu_-+\tfrac{1}{4}\omega\right\}\right|\leq \nu_0|Q_\rho^{(\theta)}(x_0,\bar t)|\end{equation}
\end{itemize}
or this does not hold. More precisely, if \eqref{1st} does not hold, we infer that the
 following second alternative holds.
  \begin{itemize}
 \item[$\bullet$]
 \textbf{The second alternative}. For any $t_0-(A-1)\theta\rho^{sp}\leq \bar t\leq t_0$, there holds
\begin{equation}\label{2nd}\left|\left\{(x,t)\in Q_\rho^{(\theta)}(x_0,\bar t)
:u\leq\mu_-+\tfrac{1}{4}\omega\right\}\right|> \nu_0|Q_\rho^{(\theta)}(x_0,\bar t)|.\end{equation}
\end{itemize}
Here, the constant $\nu_0$ will be fixed in the course of the proof of Lemma \ref{lemmaDeGiorgi1},
while the value of $A$ will be determined during the proof of Proposition \ref{2nd proposition}.
\subsection{The first alternative}
This subsection deals with the first alternative and we establish a decay estimate for essential oscillation of
$u$. A key ingredient in the proof is the following De Giorgi-type lemma.
\begin{lemma}\label{lemmaDeGiorgi1}
Let $u$ be a locally bounded weak solutions to \eqref{LKut}-\eqref{kernel}
 in the sense of Definition \ref{weak solution}. Assume that $Q_{\rho}^{(\theta)}(x_0,\bar t)\subset Q_0$, $|\mu_-|<2\omega$
 and
 \begin{equation}\begin{split}\label{firstTail}
\frac{\rho^\frac{n\kappa}{p-1}}{\theta^\frac{1}{m}}\widetilde{\mathrm{Tail}}_m((u-\mu_-)_-;Q_0)\leq\tfrac{1}{4}\omega.\end{split}\end{equation}
 There exists a constant $\nu_0\in(0,1)$, depending only on the data, such that if
\begin{equation}\label{1st assumption}\large|\large\{(x,t)\in Q_{\rho}^{(\theta)}(x_0,\bar t):u(x,t)<\mu_-+\tfrac{1}{4}\omega\large\}\large
|\leq \nu_0|Q_{\rho}^{(\theta)}(x_0,\bar t)|,\end{equation}
then
\begin{equation}
\label{DeGiorgi1}u(x,t)\geq\mu_-+\tfrac{1}{8}\omega\qquad\text{for}\ \ \text{a.e.}\ \ (x,t)\in Q_{\frac{1}{2}
\rho}^{(\theta)}(x_0,\bar t).\end{equation}
\end{lemma}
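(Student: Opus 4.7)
The plan is to carry out a De Giorgi iteration on the level sets of $(u-k)_-$ with $k$ decreasing to $\mu_-+\tfrac{1}{8}\omega$: Lemma \ref{caclemma} on a sequence of shrinking cylinders supplies the energy, Lemma \ref{embedding} supplies the gain, and assumption \eqref{firstTail} is used to absorb the long-range tail.

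\textbf{Iteration scheme.} For $j\geq 0$ set
\[
k_j = \mu_- + \tfrac{1}{8}\omega + \tfrac{\omega}{2^{j+3}},\qquad \rho_j = \tfrac{\rho}{2} + \tfrac{\rho}{2^{j+1}},\qquad \tilde\rho_j = \tfrac{\rho_j+\rho_{j+1}}{2},
\]
and let $Q_j = B_{\rho_j}(x_0)\times(\bar t-\theta\rho_j^{sp},\bar t)$. Pick piecewise smooth cutoffs $\varphi_j$ with $\varphi_j\equiv 1$ on $Q_{j+1}$, $\supp\varphi_j(\cdot,t)\subset B_{\tilde\rho_j}(x_0)$, $\varphi_j$ vanishing at $t=\bar t-\theta\rho_j^{sp}$, and satisfying $|D\varphi_j|\lesssim 2^j/\rho$, $|\partial_t\varphi_j^p|\lesssim 2^{spj}/(\theta\rho^{sp})$. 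Write $A_j=\{u<k_j\}\cap Q_j$ and $Y_j=|A_j|/|Q_j|$.

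\textbf{Energy estimate and tail bookkeeping.} Feed $(u-k_j)_-$ into Lemma \ref{caclemma} on $Q_j$; the initial boundary term vanishes by the time-cutoff. Since $|\mu_-|<2\omega$ and $|k_j|\lesssim\omega$, Lemma \ref{equivalent} gives $g_-(u,k_j)\asymp\omega^{q-1}(u-k_j)_-^2$, and $(u-k_j)_-\leq\omega/4$. The intrinsic time scaling $\theta=(\omega/4)^{q+1-p}$ then converts the time-cutoff and spatial-cutoff contributions on the right-hand side of \eqref{Cacinequality} into $C\,2^{(n+sp)j}\omega^p\rho^{-sp}|A_j|$. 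For the long-range term one writes $(u(y,t)-k_j)_-\leq\tfrac{\omega}{2}+(u(y,t)-\mu_-)_-$: the constant part integrates against $|y-x_0|^{-n-sp}$ outside $B_{\tilde\rho_j}(x_0)$ to yield $\rho^{-sp}\omega^{p-1}$, while the second part is handled via H\"older in time with exponents $m/(p-1)$ and its conjugate together with \eqref{firstTail}. After applying $\theta=(\omega/4)^{q+1-p}$ once more, the total tail contribution is again bounded by $C\,2^{(n+sp)j}\omega^p\rho^{-sp}|A_j|$.

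\textbf{Embedding, recursion, and conclusion.} Applying Lemma \ref{embedding} with $m_1=2$ to $v=(u-k_j)_-\varphi_j$ on $Q_j$, controlling the Gagliardo seminorm of $v$ via the local Caccioppoli contribution plus a product-rule correction (which the spatial-cutoff term in \eqref{Cacinequality} already handles), produces exponents $\bar q,\bar r$ for which the LHS carries the power $2(1+\kappa)$ with $\kappa$ as in \eqref{kappa}. Combining with the energy bound above and the identity $|Q_j|\asymp\theta\rho^{sp+n}\asymp\omega^{q+1-p}\rho^{sp+n}$ yields
\[
\|(u-k_j)_-\varphi_j\|_{L^{\bar q,\bar r}(Q_j)}^{2(1+\kappa)} \leq C\,b^j\,\omega^p\rho^{-sp}|A_j|,
\]
for some $b=b(\text{data})>1$. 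Restricting to $Q_{j+1}$, where $\varphi_j\equiv 1$ and $(u-k_j)_-\geq\omega/2^{j+4}$ on $A_{j+1}$, a standard Chebyshev-type argument produces the recursion $Y_{j+1}\leq C\,\tilde b^j\,Y_j^{1+\kappa}$. The fast geometric-convergence lemma then furnishes $\nu_0=\nu_0(\text{data})\in(0,1)$ such that $Y_0\leq\nu_0$, i.e.\ \eqref{1st assumption}, forces $Y_j\to 0$, equivalent to \eqref{DeGiorgi1}. The main difficulty lies in the tail estimate above: since $(u-k_j)_-$ has no integrability outside $Q_0$ a priori, it must be compared to $(u-\mu_-)_-$ modulo a constant $\lesssim\omega$, and \eqref{firstTail} is calibrated exactly to $\theta\rho^{sp}$ so as to absorb the long-range term into $\omega^p\rho^{-sp}|A_j|$ without degrading the iteration exponent $1+\kappa$.
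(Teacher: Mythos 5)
Your overall scheme (shrinking cylinders, Caccioppoli from Lemma \ref{caclemma}, embedding from Lemma \ref{embedding}, fast geometric convergence) is the same as the paper's, but the step labelled ``tail bookkeeping'' contains a genuine gap. You claim that after splitting $(u-k_j)_-(y,t)\le \tfrac{\omega}{2}+(u-\mu_-)_-(y,t)$ and applying H\"older in time with exponents $\tfrac{m}{p-1}$ and $\tfrac{m}{m-(p-1)}$, the long-range term is ``again bounded by $C\,2^{(n+sp)j}\omega^p\rho^{-sp}|A_j|$''. This is not what the H\"older step produces: since the tail of $(u-\mu_-)_-$ is only controlled in $L^{m}$ in time (with $m>p-1$, not $m=\infty$), the conjugate factor is
\begin{equation*}
\Bigl(\int \bigl|A_j(t)\bigr|^{\frac{m}{m-(p-1)}}\,\mathrm{d}t\Bigr)^{\frac{m-(p-1)}{m}},
\end{equation*}
i.e.\ a mixed-norm (in time) measure of the level sets, which is \emph{not} linearly comparable to $|A_j|=\int|A_j(t)|\,\mathrm{d}t$. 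This is exactly why the paper introduces the second sequence $Z_j$ and runs a \emph{coupled} iteration \eqref{Yj+1}--\eqref{Zj+1} in the spirit of Byun--Kim, estimating $Z_{j+1}$ through the mixed-norm $L^{\hat m,\frac{m}{m-(p-1)}\hat m}$ estimate of Lemma \ref{embedding} and choosing $\nu_0$ as in \eqref{nu0} so that the two-sequence fast-convergence lemma applies. Your single recursion $Y_{j+1}\le C\tilde b^{\,j}Y_j^{1+\kappa}$ therefore does not follow from the steps you describe. If one tries to salvage it by the crude bound $|A_j(t)|\le|B_{\rho_j}|$, the tail term is only controlled by $|A_j|^{\frac{m-(p-1)}{m}}$, which lowers the exponent in the recursion to $\tfrac{sp}{n+sp}+\tfrac{m-(p-1)}{m}$; this exceeds $1$ only under the additional restriction $m>\tfrac{(p-1)(n+sp)}{sp}$, which is not part of the hypotheses. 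So without introducing a quantity of the type $Z_j$ (or strengthening the tail assumption to $L^\infty$ in time), the iteration does not close.

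Two smaller points: the calibration of \eqref{firstTail} to $\theta\rho^{sp}$ does enter, but only to normalize the $Z_j$-type term, not to convert it into $|A_j|$; and your application of Lemma \ref{embedding} with $m_1=2$ and the asserted power $2(1+\kappa)$ is not consistent with the degenerate weight in $g_-$ --- the paper uses the pointwise inequality $g_-(v,k_j)\gtrsim 2^{-qj}\theta\,(v-k_j)_-^p$ so that the $\esssup$ term enters with $m_1=p$, matching the intrinsic scaling $\theta=(\tfrac14\omega)^{q+1-p}$. The exact exponent bookkeeping there is fixable, but the missing $Z_j$-iteration for the tail is the essential gap.
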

\begin{proof} For $j=0,1,2,\cdots$, we define the sequences $k_j=\mu_-+2^{-3}\omega+2^{-3-j}\omega$, $\tilde k_j=\frac{1}{2}(k_j+k_{j+1})$,
$\rho_j=2^{-1}\rho+2^{-j-1}\rho$, $\tilde\rho_j=\frac{1}{2}(\rho_j+\rho_{j+1})$ and $\hat\rho_j=\frac{1}{2}(\tilde\rho_j+\rho_{j+1})$.
For simplicity of notation, we write $B_j=B_{\rho_j}(x_0)$, $\tilde B_j=B_{\tilde\rho_j}(x_0)$,
$\tilde Q_j=Q_{\tilde\rho_j}^{(\theta)}(x_0,\bar t)$ and $\hat Q_j=Q_{\hat\rho_j}^{(\theta)}(x_0,\bar t)$. Moreover, we take a cutoff function
$0\leq\varphi_j\leq1$, such that $\varphi_j=1$ in $Q_{\rho_{j+1}}^{(\theta)}(x_0,\bar t)$, $\varphi_j=0$ on $\partial_P\hat Q_j$,
\begin{equation*}\begin{split}
|D\varphi_j|\leq 2^{j+3}\rho^{-1}\qquad\text{and}\qquad
|\partial_t\varphi_j|\leq c\theta^{-1}2^j\rho^{-sp}.
\end{split}\end{equation*}
With the choice of $\varphi=\varphi_j$ in Lemma \ref{caclemma}, we use the Caccioppoli inequality to the truncated functions $(u-k_j)_-$ over the cylinder $\tilde Q_j$. This leads to
\begin{equation*}\begin{split}
&\esssup_{\bar t-\theta\tilde\rho_j^{sp}<t<\bar t}\int_{B_{j+1}\times\{t\}}g_-(u,k_j) \,\mathrm {d}x
\\
&+\int_{\bar t-\theta\tilde\rho_j^{sp}}^{\bar t}\iint_{B_{j+1}\times B_{j+1}}
\frac{|(u-k_j)_-(x,t)-(u-k_j)_-(y,t)|^p}{|x-y|^{n+sp}}\,\mathrm {d}x\mathrm {d}y\mathrm {d}t
\\
&\leq \gamma 2^j\theta^{-1}\rho^{-sp}
\iint_{\tilde Q_j} g_-(u,k_j)\,\mathrm {d}x\mathrm {d}t
+\gamma\frac{2^{jp}}{\rho^{sp}}\iint_{\tilde Q_j}(u-k_j)_-^p
\,\mathrm {d}x\mathrm {d}t
\\&
+\gamma 2^{(n+sp)j}\int_{\bar t-\theta\tilde\rho_j^{sp}}^{\bar t}\int_{\mathbb{R}^n\setminus \tilde B_j}\int_{\tilde B_j}
\frac{(u-k_j)_-(y,t)^{p-1}}{|y-x_0|^{n+sp}}(u-k_j)_-(x,t)\,\mathrm {d}x\mathrm {d}y\mathrm {d}t.
\end{split}\end{equation*}
At this point, we introduce a new function $v(x,t)=u(x_0+x,\theta t+\bar t)$ and the above inequality can be written as
\begin{equation*}\begin{split}
T_{0,1}+T_{0,2}&:=\theta^{-1}\esssup_{-\tilde\rho_j^{sp}<t<0}\int_{B_{\rho_{j+1}}\times\{t\}}g_-(v,k_j) \,\mathrm {d}x
\\
&+\int_{-\tilde\rho_j^{sp}}^{0}\iint_{B_{\rho_{j+1}}\times B_{\rho_{j+1}}}
\frac{|(v-k_j)_-(x,t)-(v-k_j)_-(y,t)|^p}{|x-y|^{n+sp}}\,\mathrm {d}x\mathrm {d}y\mathrm {d}t
\\
&\leq \gamma 2^j\theta^{-1}\rho^{-sp}
\iint_{Q_{\tilde\rho_j}} g_-(v,k_j)\,\mathrm {d}x\mathrm {d}t
+\gamma\frac{2^{jp}}{\rho^{sp}}\iint_{Q_{\tilde\rho_j}}(v-k_j)_-^p
\,\mathrm {d}x\mathrm {d}t
\\&
+\gamma 2^{(n+sp)j}\int_{-\tilde\rho_j^{sp}}^0\int_{\mathbb{R}^n\setminus B_{\tilde \rho_j}}\int_{B_{\tilde \rho_j}}
\frac{(v-k_j)_-(y,t)^{p-1}}{|y|^{n+sp}}(v-k_j)_-(x,t)\,\mathrm {d}x\mathrm {d}y\mathrm {d}t
\\&=:T_1+T_2+T_3.
\end{split}\end{equation*}
Next, for  $j=0,1,2,\cdots$, we set
\begin{equation*}\begin{split}
Y_j=\frac{1}{|Q_{\rho_j}|}|\{(x,t)\in Q_{\rho_j}:v(x,t)\leq k_j\}|
\end{split}\end{equation*}
and
\begin{equation*}\begin{split}
Z_j=\left(\fint_{-\rho_j^{sp}}^0\left(\frac{1}{|B_j|}|A_-(v;0,\rho_j,k_j)|\right)^\frac{m}{m-(p-1)}\,\mathrm{d}t\right)^\frac{m-(p-1)}{m(1+\kappa)}.
\end{split}\end{equation*}
Noting that $(v-k_j)_-\leq \frac{1}{4}\omega$ in $Q_\rho$, we obtain $T_2\leq\gamma 2^{jp}\rho^{-sp}\omega^p|Q_{\rho_j}|Y_j$.
To estimate $T_1$, we first observe that $\mu_-\leq v\leq k_j\leq \mu_-+\frac{1}{4}\omega$ on the set $Q_{\rho_j}\cap \{v\leq k_j\}$. Since $|\mu_-|\leq2\omega$, we get $|v|+|k_j|\leq |\mu_-|+|\mu_-+\frac{1}{4}\omega|\leq 5\omega$ on the set $Q_{\rho_j}\cap \{v\leq k_j\}$. It follows from
Lemma \ref{equivalent} that
\begin{equation*}\begin{split}
g_-(v,k_j)&\leq\gamma(|v|+|k_j|)^{q-1}(v-k_j)_-^2
\\&\leq \gamma(|v|+|k_j|)^{q}(v-k_j)_-\leq \gamma\omega^{q+1}\chi_{\{v\leq k_j\}},
\end{split}
\end{equation*}
since $(v-k_j)_-\leq |v|+|k_j|$. Recalling that $\theta=\left(\frac{1}{4}\omega\right)^{q+1-p}$, we have
\begin{equation*}\begin{split}
T_1\leq \gamma \frac{2^j}{\omega^{q+1-p}}\omega^{q+1}Y_j\rho^{-sp}|Q_{\rho_j}|\leq \gamma 2^{jp}\rho^{-sp}\omega^p|Q_{\rho_j}|Y_j.
\end{split}
\end{equation*}
The estimate of $T_3$ is quite similar to the estimate of $I_3$ in the proof of \cite[Lemma 4.1]{BK}, and this gives
\begin{equation*}\begin{split}
T_3&\leq \gamma 2^{j(n+sp)}\rho^{-sp}\omega^p|Q_{\rho_j}|Y_j
\\&\quad+2^{j(n+sp)}\theta^{-\frac{p-1}{m}}\widetilde{\mathrm{Tail}}_m((u-\mu_-)_-;Q_0)
^{p-1}\omega \rho^{sp\frac{m-(p-1)}{m}}|B_\rho|Z_j^{1+\kappa}
\\&\leq \gamma 2^{j(n+sp)}\rho^{-sp}\omega^p|Q_{\rho_j}|(Y_j+Z_j^{1+\kappa}),
\end{split}
\end{equation*}
where we used \eqref{firstTail} in the last line.
Combining these inequalities, we conclude that
\begin{equation*}\begin{split}
T_{0,1}+T_{0,2}\leq \gamma 2^{j(n+sp)}\rho^{-sp}\omega^p|Q_{\rho_j}|(Y_j+Z_j^{1+\kappa}).
\end{split}
\end{equation*}
On the other hand, we apply H\"older's inequality and Lemma \ref{embedding} to obtain
\begin{equation*}\begin{split}
Y_{j+1}|Q_{\rho_{j+1}}|&\leq \iint_{Q_{\rho_{j+1}}}\frac{(v-\tilde k_j)_-^p}{(\tilde k_j-k_{j+1})^p}\,\mathrm{d}x\mathrm{d}t
\\&\leq 2^{(j+2)p}\omega^{-p}|Q_{\rho_{j+1}}|^\frac{sp}{n+sp}Y_j^\frac{sp}{n+sp}\left[
\int_{-\rho_{j+1}^{sp}}^0\left[(v-\tilde k_j)_-(\cdot,t)\right]_{W^{s,p}(B_{\rho_{j+1}})}^p\,\mathrm{d}t
\right.
\\&\left.
+\rho_{j+1}^{-sp}\|(v-\tilde k_j)_-\|_{L^p(Q_{\rho_{j+1}})}^p+\esssup_{-\rho_{j+1}^{sp}
<t<0}\int_{B_{\rho_{j+1}}}(v-\tilde k_j)_-^p(x,t)\,\mathrm{d}x
\right]
\\&=:T_4+T_5+T_6.
\end{split}
\end{equation*}
Since $\tilde k_j\leq k_j$, we use Lemma \ref{b<a} to obtain
\begin{equation*}\begin{split}
T_4\leq 2^{(j+2)p}\omega^{-p}|Q_{\rho_{j+1}}|^\frac{sp}{n+sp}Y_j^\frac{sp}{n+sp}T_{0,2}\leq \gamma 2^{(n+2p)j}
|Q_{\rho_j}|Y_j^\frac{sp}{n+sp}(Y_j+Z_j^{1+\kappa}).
\end{split}
\end{equation*}
In view of $(v-\tilde k_j)_-\leq \frac{1}{4}\omega$ in $Q_\rho$, we deduce $T_5\leq \gamma 2^{jp}Y_j^{1+\frac{sp}{n+sp}}|Q_{\rho_{j+1}}|$.
Finally, we come to the estimate of $T_6$. We first observe that the inequality
\begin{equation*}\begin{split}
|v|+|k_j|\geq k_j-v>k_j-\tilde k_j=\frac{1}{2}(k_j-k_{j+1})=2^{-j-4}\omega
\end{split}
\end{equation*}
holds on the set $\{v<\tilde k_j\}\cap Q_{\rho_{j+1}}$. Recalling that $|v|+|k_j|\leq 5\omega$ on the set $Q_{\rho_j}\cap \{v\leq k_j\}$, we infer from Lemma \ref{equivalent} that on the set $\{v<\tilde k_j\}\cap Q_{\rho_{j+1}}$, there holds
\begin{equation*}\begin{split}
g_-(v,k_j)&\geq \gamma(|v|+|k_j|)^{q-1}(v-k_j)_-^2\geq \gamma 2^{-qj}\omega^{q-1}(v-k_j)_-^{2-p}(v-k_j)_-^p
\\&\geq \gamma 2^{-qj}\omega^{1+q-p}(v-k_j)_-^p=\gamma 2^{-qj}\theta(v-k_j)_-^p,
\end{split}
\end{equation*}
since $p>2$, $\theta=\left(\frac{1}{4}\omega\right)^{q+1-p}$ and $(v-k_j)_-\leq \frac{1}{4}\omega$ in $Q_\rho$.
Therefore, we arrive at
\begin{equation*}\begin{split}
T_6&\leq \gamma 2^{jp}\omega^{-p}|Q_{\rho_{j+1}}|^\frac{sp}{n+sp}Y_j^\frac{sp}{n+sp}\esssup_{-\rho_{j+1}^{sp}
<t<0}\int_{B_{\rho_{j+1}}}2^{jq}\theta^{-1}g_-(v,k_j)\,\mathrm{d}x
\\&\leq \gamma 2^{j(p+q)}\omega^{-p}|Q_{\rho_{j+1}}|^\frac{sp}{n+sp}Y_j^\frac{sp}{n+sp} T_{0,1}
\\&\leq \gamma 2^{j(p+q)}|Q_{\rho_j}|Y_j^\frac{sp}{n+sp}(Y_j+Z_j^{1+\kappa}).
\end{split}
\end{equation*}
Consequently, we infer that
\begin{equation}\begin{split}\label{Yj+1}
Y_{j+1}\leq \gamma 2^{j(2p+n+q)}\left(Y_j^{1+\frac{sp}{n+sp}}+Y_j^\frac{sp}{n+sp}Z_j^{1+\kappa}\right).
\end{split}
\end{equation}
Finally, we consider the estimate of $Z_{j+1}$. We use H\"older's inequality and Lemma \ref{embedding} to obtain
\begin{equation*}\begin{split}
\rho_j^nZ_{j+1}&\leq \gamma 2^{jp}\omega^{-p}\|(v-\tilde k_j)_-\|_{L^{\hat m,\frac{m}{m-(p-1)}\hat m}(Q_{{\rho_{j+1}}})}^p
\\&\leq \gamma 2^{jp}\omega^{-p}\left(2^{jq}T_{0,1}+T_{0,2}+\omega^p\rho^{-sp}Y_j|Q_{\rho_{j+1}}|\right)
\\&\leq \gamma\rho^n2^{(n+2p+q)j}\left(Y_j+Z_j^{1+\kappa}\right),
\end{split}
\end{equation*}
where $\hat m=p(1+\kappa)$. This implies that
\begin{equation}\begin{split}\label{Zj+1}
Z_{j+1}&\leq \gamma 2^{(n+2p+q)j}\left(Y_j+Z_j^{1+\kappa}\right).
\end{split}
\end{equation}
At this point, we set
\begin{equation}\begin{split}\label{nu0}
\nu_0=(4\gamma)^{-\frac{m(1+\kappa)}{(m-(p-1))\zeta}}2^{-(n+2p+q)\frac{m(1+\kappa)}{(m-(p-1))\zeta^2}},
\end{split}
\end{equation}
where $\zeta=\min\left\{\kappa,\frac{sp}{n+sp}\right\}$.
Using a lemma on fast geometric convergence of sequences (see for instance \cite[Chapter I, Lemma 4.2]{Di93}),
we infer from \eqref{Yj+1} and \eqref{Zj+1} that $Y_n\to0$ as $n\to\infty$. Transforming
back to the original function $u$, we obtain the desired estimate \eqref{DeGiorgi1}.
This
completes the proof of Lemma \ref{lemmaDeGiorgi1}.
\end{proof}
Next, we establish
a variant of DeGiorgi-type lemma, involving the initial data.
\begin{lemma}\label{lemmaDeGiorgi2}
Let $u$ be a locally bounded weak solutions to \eqref{LKut}-\eqref{kernel}
 in the sense of Definition \ref{weak solution}. Let $\xi\in(0,1)$ be a fixed constant.
 Set
  \begin{equation}\label{hattheta}
  \hat\theta=
	\begin{cases}
	(\xi\omega)^{q+1-p},\qquad \text{if}\quad |\mu_-|\leq \xi\omega\quad\text{or}\quad q\geq1,  \\
	\omega^{q-1}(\xi\omega)^{2-p}, \qquad \text{if}\quad -2\omega\leq \mu_-\leq -\xi\omega \quad\text{and}\quad q<1.
	\end{cases}
\end{equation}
Let $t_1\in(t_0-A\theta\rho^{sp},t_0)$ be a fixed time level. Assume that
\begin{equation}\label{2st assumption}u(\cdot,t_1)\geq \mu_-+\xi\omega\quad\text{a.e.}\quad\text{in}\quad B_\rho(x_0).\end{equation}
 There exists a constant $\nu_1\in(0,1)$, depending only on the data, such that if
  \begin{equation}\begin{split}\label{secondTail}
\frac{\rho^\frac{n\kappa}{p-1}}{(\nu\hat\theta)^\frac{1}{m}}\widetilde{\mathrm{Tail}}_m((u-\mu_-)_-;Q_0)\leq\xi\omega\end{split}\end{equation}
and $B_{\rho}(x_0)\times(t_1,t_1+\nu\hat\theta\rho^{sp})\subset Q_0$ hold for any fixed $\nu<\nu_1$,
then
\begin{equation}
\label{DeGiorgi2}u(x,t)>\mu_-+\tfrac{1}{2}\xi\omega\qquad\text{for}\ \ \text{a.e.}\ \ (x,t)\in B_{\frac{1}{2}\rho}(x_0)\times(t_1,t_1+\nu\hat\theta\rho^{sp}).\end{equation}
\end{lemma}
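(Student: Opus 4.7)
The plan is to adapt the De~Giorgi iteration of Lemma~\ref{lemmaDeGiorgi1} to this setting, with the role of the measure-smallness hypothesis \eqref{1st assumption} replaced by the pointwise initial condition \eqref{2st assumption}. I will fix the decreasing level sequence
\[
k_j = \mu_- + \tfrac{1}{2}\xi\omega + 2^{-j-1}\xi\omega, \qquad j = 0, 1, 2, \dots,
\]
so that $k_0 = \mu_- + \xi\omega$ and $k_j \downarrow \mu_- + \tfrac{1}{2}\xi\omega$, together with shrinking radii $\rho_j = \tfrac{1}{2}\rho + 2^{-j-1}\rho$, and work on the forward cylinders $Q_j^\star = B_{\rho_j}(x_0) \times (t_1, t_1 + \nu\hat\theta\rho^{sp})$. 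Choose cutoffs $\varphi_j = \varphi_j(x)$ depending only on space, with $\varphi_j \equiv 1$ on $B_{\rho_{j+1}}$ and $|D\varphi_j| \lesssim 2^j/\rho$, so that $\partial_t \varphi_j \equiv 0$. Thanks to \eqref{2st assumption}, $(u(\cdot, t_1) - k_j)_- \equiv 0$ in $B_\rho(x_0)$ for every $j\geq 0$, and hence the initial-time integral in \eqref{Cacinequality} vanishes; no time cutoff is needed.

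Applying Lemma~\ref{caclemma} to $(u-k_j)_-$ on $Q_j^\star$ then produces a Caccioppoli inequality whose right-hand side consists only of the spatial-cutoff term and the nonlocal tail term. To close the iteration we need a pointwise lower bound of the shape
\[
g_-(u,k_j) \geq C\,2^{-Cj}\,\hat\theta\,(u-k_j)_-^p \qquad \text{on}\ \{u < \tilde k_j\},
\]
where $\tilde k_j = \tfrac{1}{2}(k_j + k_{j+1})$. Starting from Lemma~\ref{equivalent}, which gives $g_- \geq C(|u|+|k_j|)^{q-1}(u-k_j)_-^2$, I will use two elementary facts valid on $\{u<k_j\}$: since $u \geq \mu_-$, we have $(u-k_j)_- \leq k_j - \mu_- \leq \xi\omega$, so $(u-k_j)_-^{2-p}\geq(\xi\omega)^{2-p}$ because $p>2$; and the reverse triangle inequality gives $|u|+|k_j|\geq(u-k_j)_-$, so on $\{u<\tilde k_j\}$ we have $|u|+|k_j|\gtrsim 2^{-j}\xi\omega$. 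When $q\geq 1$ the second fact yields $(|u|+|k_j|)^{q-1}\geq C\,2^{-j(q-1)}(\xi\omega)^{q-1}$, producing the first case of \eqref{hattheta}. When $q<1$ the function $s\mapsto s^{q-1}$ is decreasing, so an upper bound on $|u|+|k_j|$ is what is needed: the bound $|u|+|k_j|\lesssim \xi\omega$ is valid under $|\mu_-|\leq\xi\omega$ (giving the first case of \eqref{hattheta}), whereas only the coarser bound $|u|+|k_j|\lesssim \omega$ is available when $\mu_-\leq-\xi\omega$ (giving the second case).

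With $\hat\theta$ identified, the iteration proceeds exactly as in Lemma~\ref{lemmaDeGiorgi1}. Rescale $v(x,t) = u(x_0 + x, t_1 + \hat\theta t)$, introduce
\[
Y_j = \frac{|\{v < k_j\}\cap (B_{\rho_j}\times(0, \nu\rho^{sp}))|}{|B_{\rho_j}\times(0, \nu\rho^{sp})|}
\]
together with the nonlocal analogue $Z_j$, invoke H\"older's inequality, Lemma~\ref{embedding} and Lemma~\ref{b<a} to obtain estimates playing the roles of $T_4$, $T_5$ and $T_6$ in the previous proof, and absorb the tail term into a geometric multiple of $Z_j^{1+\kappa}$ by means of \eqref{secondTail}. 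This produces an iterative system of the shape \eqref{Yj+1}-\eqref{Zj+1}. Choosing $\nu_1$ small enough will ensure that $Y_0 + Z_0^{1+\kappa}$ lies below the threshold of the standard fast-geometric-convergence lemma, so that $Y_j \to 0$ and \eqref{DeGiorgi2} follows. The main obstacle is the correct identification of $\hat\theta$ in the subcase $q\geq 1$ with $\mu_-\leq-\xi\omega$: the naive monotonicity bound on $(|u|+|k_j|)^{q-1}$ would only yield the weaker case-two scaling, and it is the reverse-triangle lower bound $|u|+|k_j|\geq(u-k_j)_-$ that produces the sharper case-one value of $\hat\theta$.
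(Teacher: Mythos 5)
Your proposal follows essentially the same route as the paper's proof: space-only cutoffs so that the initial-time term in \eqref{Cacinequality} vanishes by \eqref{2st assumption}, the same level and radius sequences, the pointwise lower bound $g_-(u,k_j)\gtrsim 2^{-qj}\hat\theta\,(u-k_j)_-^p$ on $\{u<\tilde k_j\}$ obtained by exactly the case analysis behind \eqref{hattheta} (the lower bound $|u|+|k_j|\ge k_j-\tilde k_j$ when $q\ge1$, the upper bounds of order $\xi\omega$ resp. $\omega$ when $q<1$), and absorption of the nonlocal term via \eqref{secondTail}; this is the paper's claim \eqref{claim1} and its proof.

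One step should be stated more carefully. The iteration you arrive at cannot be literally ``of the shape \eqref{Yj+1}--\eqref{Zj+1}'': it must carry explicit positive powers of $\nu$, as in \eqref{Yj+1xi}--\eqref{Zj+1xi}, which come from normalizing by $|U_{j+1}|\sim \nu\rho^{sp}|B_{\rho_{j+1}}|$ and from the factor $\hat\theta^{-1}$ in front of the sup term of the energy. Since there is no measure-smallness hypothesis here, $Y_0+Z_0^{1+\kappa}$ is only bounded by $2$, and shrinking $\nu$ does not make $Y_0$ or $Z_0$ small; what the smallness of $\nu$ buys is a small constant $\gamma\nu^l$ in the recursion, which raises the admissible threshold of the fast-geometric-convergence lemma above $2$ — this is precisely how $\nu_1$ is fixed in \eqref{nu1}. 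With that bookkeeping your argument closes exactly as in the paper.
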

\begin{proof}For $j=0,1,2,\cdots$, we define the sequences $k_j=\mu_-+2^{-1}\xi\omega+2^{-1-j}\xi\omega$, $\tilde k_j=\frac{1}{2}(k_j+k_{j+1})$,
$\rho_j=2^{-1}\rho+2^{-j-1}\rho$, $\tilde\rho_j=\frac{1}{2}(\rho_j+\rho_{j+1})$ and $\hat\rho_j=\frac{1}{2}(\tilde\rho_j+\rho_{j+1})$.
We take a cutoff function $\varphi_j=\phi_j(x)\in C_0^\infty(B_{\hat \rho_j}(x_0))$,
which is independent of the time variable. Moreover, we assume that $\phi_j=1$ in $B_{\rho_{j+1}}(x_0)$ and $|D\varphi_j|\leq 2^{j+3}\rho^{-1}$.

With the choice of $\varphi=\phi_j$ in Lemma \ref{caclemma}, we first use the Caccioppoli inequality to the truncated functions $(u-k_j)_-$ over the cylinder $B_{\tilde\rho_j}(x_0)\times(t_1,t_1+\nu\hat\theta\rho^{sp})$ and then make a change of variable $v(x,t)=u(x_0+x,\hat\theta t+t_1)$.
This leads to
\begin{equation*}\begin{split}
T_{0,1}+T_{0,2}&:=\hat\theta^{-1}\esssup_{0<t<\nu\rho^{sp}}\int_{B_{\rho_{j+1}}\times\{t\}}g_-(v,k_j) \,\mathrm {d}x
\\
&+\int_{0}^{\nu\rho^{sp}}\iint_{B_{\rho_{j+1}}\times B_{\rho_{j+1}}}
\frac{|(v-k_j)_-(x,t)-(v-k_j)_-(y,t)|^p}{|x-y|^{n+sp}}\,\mathrm {d}x\mathrm {d}y\mathrm {d}t
\\
\leq&
\gamma 2^{(n+sp)j}\int_{0}^{\nu\rho^{sp}}\int_{\mathbb{R}^n\setminus B_{\tilde \rho_j}}\int_{B_{\tilde \rho_j}}
\frac{(v-k_j)_-(y,t)^{p-1}}{|y|^{n+sp}}(v-k_j)_-(x,t)\,\mathrm {d}x\mathrm {d}y\mathrm {d}t
\\&+ \gamma\frac{2^{jp}}{\rho^{sp}}\int_{0}^{\nu\rho^{sp}}\int_{B_{\tilde \rho_j}}(v-k_j)_-^p
\,\mathrm {d}x\mathrm {d}t=:T_1+T_2.
\end{split}\end{equation*}
In view of \eqref{2st assumption}, we find that the first term on the right-hand
side of \eqref{Cacinequality} vanishes.
Next, for  $j=0,1,2,\cdots$, we set $U_j=B_{\rho_j}\times(0,\nu \rho^{sp})$,
\begin{equation*}\begin{split}
Y_j=\frac{1}{|U_j|}|\{(x,t)\in U_j:v(x,t)\leq k_j\}|
\end{split}\end{equation*}
and
\begin{equation*}\begin{split}
Z_j=\left(\fint_{0}^{\nu\rho^{sp}}\left(\frac{1}{|B_{\rho_j}|}
|A_-(v;0,\rho_j,k_j)|\right)^\frac{m}{m-(p-1)}\,\mathrm{d}t\right)^\frac{m-(p-1)}{m(1+\kappa)}.
\end{split}\end{equation*}
Since $(v-k_j)_-\leq \xi\omega$ in $U_j$, we get $T_2\leq \gamma 2^{jp}\rho^{-sp}(\xi\omega)^p|U_j|Y_j$.
The estimate of $T_1$ is quite similar to the estimate of $I_3$ in the proof of \cite[Lemma 4.1]{BK}.
Taking \eqref{secondTail} into account, we deduce
\begin{equation*}\begin{split}
T_1\leq \gamma 2^{(n+sp)j}(\xi\omega)^p\rho^{-sp}|U_j|(Y_j+Z_j^{1+\kappa}).
\end{split}\end{equation*}
On the other hand, we apply H\"older's inequality, Lemma \ref{embedding} and Lemma \ref{b<a} to conclude that
\begin{equation}\begin{split}\label{yj+1Uj+1}
Y_{j+1}|U_{j+1}|&\leq 2^{jp}(\xi\omega)^{-p}|U_{j+1}|^\frac{sp}{n+sp}Y_j^\frac{sp}{n+sp}
\\&\times\left[
T_{0,2}
+\rho^{-sp}(\xi\omega)^pY_j|U_{j+1}|+\esssup_{0
<t<\nu\rho^{sp}}\int_{B_{\rho_{j+1}}}(v-\tilde k_j)_-^p(x,t)\,\mathrm{d}x
\right].
\end{split}
\end{equation}
It remains to treat the third term on the right-hand side of \eqref{yj+1Uj+1}. To this end, we claim that
\begin{equation}\begin{split}\label{claim1}
g_-(v,k_j)&\geq \gamma 2^{-qj}\hat\theta(v-k_j)_-^p\qquad\text{on}\quad U_{j+1}\cap\{v\leq\tilde k_j\}.
\end{split}
\end{equation}
First, we note that on the set
$U_{j+1}\cap\{v\leq\tilde k_j\}$, there holds
\begin{equation}\begin{split}\label{|v|+|kj|}
|v|+|k_j|\geq k_j-v\geq k_j-\tilde k_j=2^{-j-1}\xi\omega.
\end{split}
\end{equation}
In the case $|\mu_-|\leq \xi\omega$,
we have $\hat\theta=(\xi\omega)^{q+1-p}$.
Noting that $\mu_-\leq v\leq \mu_-+\xi\omega$ on
$U_{j+1}\cap\{v\leq\tilde k_j\}$, we get $|v|+|k_j|\leq 2|\mu_-|+\xi\omega\leq 3\xi\omega$. Since $(v-k_j)_-\leq\xi\omega$ in $U_{j+1}$,
we infer from Lemma \ref{equivalent} that
\begin{equation*}\begin{split}
g_-(v,k_j)&\geq \gamma(|v|+|k_j|)^{q-1}(v-k_j)_-^2\geq \gamma 2^{-qj}(\xi\omega)^{q-1}(v-k_j)_-^{2-p}(v-k_j)_-^p
\\&\geq \gamma 2^{-qj}(\xi\omega)^{1+q-p}(v-k_j)_-^p=\gamma 2^{-qj}\hat\theta(v-k_j)_-^p,
\end{split}
\end{equation*}
which proves the claim \eqref{claim1}. In the case $q\geq1$, we have $\hat\theta=(\xi\omega)^{q+1-p}$. According to \eqref{|v|+|kj|},
the claim \eqref{claim1} follows from Lemma \ref{equivalent} and the fact that $(v-k_j)_-\leq\xi\omega$ in $U_{j+1}$.

Finally, we consider the case $-2\omega\leq \mu_-\leq -\xi\omega$ and $q<1$. In this case, we have $\hat \theta=\omega^{q-1}(\xi\omega)^{2-p}$.
In view of $-2\omega\leq \mu_-\leq -\xi\omega$, we get $|\mu_-|\leq 2\omega$; hence $|v|\leq 2|\mu_-|+\xi\omega<5\omega$. It follows that
$|v|+|k_j|\leq 5\omega+|\mu_-|+\xi\omega<8\omega$. At this point, we deduce from Lemma \ref{equivalent} that
\begin{equation*}\begin{split}
g_-(v,k_j)&\geq \gamma(|v|+|k_j|)^{q-1}(v-k_j)_-^2\geq \gamma\omega^{q-1}(v-k_j)_-^{2-p}(v-k_j)_-^p
\\&\geq \gamma \omega^{q-1}(\xi\omega)^{2-p}(v-k_j)_-^p=\gamma \hat\theta(v-k_j)_-^p,
\end{split}
\end{equation*}
since $q<1$ and $(v-k_j)_-\leq\xi\omega$ in $U_{j+1}$. This completes the proof of the
claim \eqref{claim1}.

Combining \eqref{yj+1Uj+1}-\eqref{|v|+|kj|}, we conclude that
\begin{equation}\begin{split}\label{Yj+1xi}
Y_{j+1}\leq \gamma 2^{j(2p+n+q)}\nu^\frac{sp}{n+sp}\left(Y_j^{1+\frac{sp}{n+sp}}+Y_j^\frac{sp}{n+sp}Z_j^{1+\kappa}\right).
\end{split}
\end{equation}
Analysis similar to that in the proofs of \cite[Lemma 4.1, 4.2]{BK} shows that
\begin{equation}\begin{split}\label{Zj+1xi}
Z_{j+1}&\leq \gamma 2^{(n+2p+q)j}\nu^{1-\frac{m-(p-1)}{m(1+\kappa)}}\left(Y_j+Z_j^{1+\kappa}\right).
\end{split}
\end{equation}
At this stage, we choose
\begin{equation}\begin{split}\label{nu1}
\nu_1=(2\gamma)^{-\frac{1}{l}}2^{-\frac{\zeta}{(1+\kappa)l}-\frac{n+2p+q}{\zeta l}},
\end{split}
\end{equation}
where $\zeta=\min\left\{\kappa,\frac{sp}{n+sp}\right\}$ and $l=\min\left\{\frac{sp}{n+sp},1-\frac{m-(p-1)}{m(1+\kappa)}\right\}$.
Since $Y_0\leq1$ and $Z_0\leq1$, we find that the inequality
\begin{equation*}Y_0+Z_0^{1+\kappa}\leq 2\leq (2\gamma \nu^l)^{-\frac{1+\kappa}{\zeta}}2^{-\frac{(n+2p+q)(1+\kappa)}{\zeta^2}}\end{equation*}
holds for any $\nu\in(0,\nu_1]$.
Using a lemma on fast geometric convergence of sequences (see for instance \cite[Chapter I, Lemma 4.2]{Di93}),
we conclude from \eqref{Yj+1xi} and \eqref{Zj+1xi} that the sequence $\{Y_n\}$ tends to zero, as $n\to\infty$. Transforming
back to the original function $u$, we obtain the desired estimate \eqref{DeGiorgi2}.
This
completes the proof of Lemma \ref{lemmaDeGiorgi2}.
\end{proof}
Before proceeding further, we denote
\begin{equation}\begin{split}\label{xi0}
\xi_0=\min\left\{\frac{1}{8},\frac{1}{4}\left(\frac{\nu_1}{2^{sp}A}\right)^\frac{1}{p-q-1},4^\frac{q+1-p}{p-2}\left(\frac{\nu_1}{2^{sp}A}\right)
^\frac{1}{p-2}\right\},
\end{split}
\end{equation}
where $\nu_1$ is the constant in \eqref{nu1}. The next proposition is our main result in this subsection.
\begin{proposition}\label{1st proposition}
Let $u$ be a bounded weak solution to \eqref{LKut}-\eqref{kernel}
 in the sense of Definition \ref{weak solution}.
Assume that there exists a time level $t_0-(A-1)\theta\rho^{sp}\leq \bar t\leq t_0$ such that \eqref{1st} holds.
Moreover, suppose that
 \begin{equation}\begin{split}\label{Alt1Tail}
\frac{\rho^\frac{n\kappa}{p-1}}{\theta^\frac{1}{m}}\widetilde{\mathrm{Tail}}_m((u-\mu_-)_-;Q_0)\leq\xi_0\omega.\end{split}\end{equation}
Then, we have
\begin{equation}\begin{split}\label{osc1}
\essosc_{\tilde Q_0} u\leq (1-\tfrac{1}{2}\xi_0)\omega,
 \end{split}\end{equation}
 where $\tilde Q_0=Q_{\frac{1}{4}\rho}^{(\theta)}(z_0)$.
\end{proposition}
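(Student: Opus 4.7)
The plan is to chain Lemmas \ref{lemmaDeGiorgi1} and \ref{lemmaDeGiorgi2}: the first converts the measure condition \eqref{1st} into a pointwise lower bound on $u$ at the time slice $\bar t$, and the second propagates that bound forward in time to cover the full cylinder $\tilde Q_0$. The whole argument pivots on the calibration of $\xi_0$ in \eqref{xi0}, which is engineered so that the forward-propagation interval overshoots the temporal gap $t_0-\bar t$.

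First, I would apply Lemma \ref{lemmaDeGiorgi1} on the cylinder $Q_\rho^{(\theta)}(x_0,\bar t)$. The measure hypothesis \eqref{1st assumption} is exactly our \eqref{1st}, while the tail hypothesis \eqref{firstTail} is implied by \eqref{Alt1Tail} because $\xi_0\leq\tfrac{1}{8}<\tfrac{1}{4}$. This yields $u\geq\mu_-+\tfrac{1}{8}\omega$ a.e.\ on $Q_{\rho/2}^{(\theta)}(x_0,\bar t)$. Setting $t_1:=\bar t-\theta(\rho/2)^{sp}$, I obtain in particular $u(\cdot,t_1)\geq\mu_-+\xi_0\omega$ on $B_{\rho/2}(x_0)$, which is the initial datum \eqref{2st assumption} needed for Lemma \ref{lemmaDeGiorgi2} at radius $\rho/2$ with $\xi=\xi_0$.

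Next I would invoke Lemma \ref{lemmaDeGiorgi2} with this $\xi$ and radius, choosing the parameter $\nu:=(t_0-t_1)/(\hat\theta(\rho/2)^{sp})$ so that the forward cylinder terminates exactly at $t_0$. Since the framework \eqref{omegaxi0} forces $|\mu_-|\leq\sup_{Q_R}|u|\leq\xi_0\omega/4$, we lie in the first regime of \eqref{hattheta}, whence $\hat\theta/\theta=(4\xi_0)^{q+1-p}$. The tail condition \eqref{secondTail} collapses to $\theta\leq\nu\hat\theta$, which is automatic from the choice of $\nu$ since $\nu\hat\theta(\rho/2)^{sp}=t_0-t_1\geq\theta(\rho/2)^{sp}$. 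The main obstacle, and the whole reason for the precise form of \eqref{xi0}, is verifying $\nu\leq\nu_1$; using $t_0-\bar t\leq(A-1)\theta\rho^{sp}$, this inequality reduces to
\[
\hat\theta/\theta\;\geq\;\big((A-1)2^{sp}+1\big)/\nu_1,
\]
and substituting $\hat\theta/\theta=(4\xi_0)^{-(p-q-1)}$ produces precisely the second clause of \eqref{xi0}. (The third clause of \eqref{xi0} plays the analogous role in the second regime of \eqref{hattheta}, which becomes relevant outside the present setup.)

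The conclusion of Lemma \ref{lemmaDeGiorgi2} then reads $u>\mu_-+\tfrac{1}{2}\xi_0\omega$ a.e.\ on $B_{\rho/4}(x_0)\times(t_1,t_0)$. This set contains $\tilde Q_0=B_{\rho/4}(x_0)\times(t_0-\theta(\rho/4)^{sp},t_0)$, since $t_1\leq t_0-\theta(\rho/2)^{sp}\leq t_0-\theta(\rho/4)^{sp}$. Combining with the trivial upper bound $u\leq\mu_+\leq\mu_-+\omega$ on $\tilde Q_0$ delivers $\essosc_{\tilde Q_0}u\leq(1-\tfrac{1}{2}\xi_0)\omega$, which is \eqref{osc1}.
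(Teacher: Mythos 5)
Your argument follows the paper's own route: Lemma \ref{lemmaDeGiorgi1} at the slice $\bar t$, then Lemma \ref{lemmaDeGiorgi2} with $\xi=\xi_0$ at radius $\rho/2$, with the second clause of \eqref{xi0} guaranteeing that the forward cylinder reaches $t_0$. Your handling of the parameter $\nu$ is a mild streamlining of the paper's: instead of sandwiching $\nu\in[\nu_1',\nu_1]$ with $\nu_1'=4^{p-q-1}\xi_0^{p-1-q}$ (so that $\nu\hat\theta\geq\nu_1'\hat\theta=\theta$), you define $\nu$ by $\nu\hat\theta(\rho/2)^{sp}=t_0-t_1$ and observe directly that $\nu\hat\theta\geq\theta$, which reduces \eqref{secondTail} to \eqref{Alt1Tail}; this is correct and equivalent in substance.

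The one real divergence is that you prove only half of what the paper proves. The paper's proof splits according to \eqref{hattheta}: the case $|\mu_-|\leq\xi_0\omega$ or $q\geq1$ (your case), and the case $-2\omega\leq\mu_-\leq-\xi_0\omega$ with $q<1$, where $\hat\theta=\omega^{q-1}(\xi_0\omega)^{2-p}$, $\nu_1''=4^{p-q-1}\xi_0^{p-2}$ and the third clause of \eqref{xi0} are used. You dismiss the second case via $|\mu_-|\leq\sup_{Q_R}|u|\leq\tfrac{1}{4}\xi_0\omega$, which is indeed valid for the initial-scale quantities defined by \eqref{omegaxi0}, so under a strictly literal reading of the Proposition your proof closes. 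But the Proposition is quoted again at every rescaled level in Step 1 and Step 4 of subsection \ref{nearzeroproof}, where only \eqref{nearzeroallj} and $\essosc_{Q_j}u\leq\omega_j$ are available; there one knows $\mu_j^-\leq\xi_0\omega_j$ and $|\mu_j^-|\leq 2\omega_j$, but not $|\mu_j^-|\leq\tfrac{1}{4}\xi_0\omega_j$, and for $q<1$ with $\mu_j^-$ negative the second regime of \eqref{hattheta} is genuinely needed. This is precisely why the paper's proof avoids the stronger bound on $|\mu_-|$ and why \eqref{xi0} carries its third clause. So your proof cannot be substituted for the paper's in the iteration; to be a full replacement you would have to run the second case as the paper does (the argument is parallel, with $\nu_1''$ in place of $\nu_1'$), rather than relegating it to ``outside the present setup.''
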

\begin{proof} First, we infer from \eqref{Alt1Tail} that \eqref{firstTail} holds. This enables us to use Lemma \ref{lemmaDeGiorgi1}, since $|\mu_-|\leq 2\omega$.
According to \eqref{DeGiorgi1}, we infer that
\begin{equation}
\label{initial1}u(x,\tilde t)\geq\mu_-+\xi_0\omega\qquad\text{for}\ \ \text{a.e.}\ \ x\in B_{\frac{1}{2}\rho}(x_0),\end{equation}
where $\tilde t=\bar t-\theta (\frac{1}{2}\rho)^{sp}$. Next, we will apply Lemma \ref{lemmaDeGiorgi2} with $\xi=\xi_0$.
To this end, we distinguish two cases.

In the case $|\mu_-|\leq\xi_0\omega$ or $q\geq1$, we infer from \eqref{hattheta} that $\hat\theta=(\xi_0\omega)^{q+1-p}$.
In view of \eqref{xi0}, we find that
\begin{equation}\label{tildet1}\tilde t+\nu_1(\xi_0\omega)^{q+1-p}\left(\tfrac{1}{2}\rho\right)^{sp}\geq t_0,\end{equation}
since $\tilde t\geq t_0-A\theta\rho^{sp}=t_0-A(\frac{1}{4}\omega)^{q+1-p}\rho^{sp}$ and $\xi_0\leq \frac{1}{4}\left(\frac{1}{2^{sp}A}\nu_1\right)^\frac{1}{p-q-1}$.
On the other hand, we set $\nu_1^\prime=4^{p-q-1}\xi_0^{p-1-q}$. We observe that $\nu_1^\prime\leq 2^{-sp}A^{-1}\nu_1<\nu_1$ and there holds
\begin{equation}\label{tildet11}\tilde t+\nu_1^\prime(\xi_0\omega)^{q+1-p}\left(\tfrac{1}{2}\rho\right)^{sp}\leq t_0,\end{equation}
since $\tilde t=\bar t-\theta (\frac{1}{2}\rho)^{sp}\leq t_0-(\frac{1}{4}\omega)^{q+1-p}(\frac{1}{2}\rho)^{sp}$. From \eqref{tildet1}
and \eqref{tildet11}, we infer that there exists a constant $\nu\in [\nu_1^\prime,\nu_1]$, such that
\begin{equation*}
\tilde t+\nu\hat\theta\left(\tfrac{1}{2}\rho\right)^{sp}=
\tilde t+\nu(\xi_0\omega)^{q+1-p}\left(\tfrac{1}{2}\rho\right)^{sp}= t_0.\end{equation*}
Moreover, we infer from \eqref{Alt1Tail} that
\begin{equation*}\begin{split}
\frac{(\frac{1}{2}\rho)^\frac{n\kappa}{p-1}}{\omega^\frac{q+1-p}{m}}&\widetilde{\mathrm{Tail}}_m((u-\mu_-)_-;Q_0)<4^\frac{p-1-q}{m}\xi_0\omega
\\&= (\nu_1^\prime)^\frac{1}{m}\xi_0^{1-\frac{p-1-q}{m}}
\omega
\leq\nu^\frac{1}{m}\xi_0^{1-\frac{p-1-q}{m}}
\omega,
\end{split}\end{equation*}
which in turn yields that \eqref{secondTail} holds with $\xi=\xi_0$ and $\hat\theta=(\xi_0\omega)^{q+1-p}$.
We now apply Lemma \ref{lemmaDeGiorgi2} with $\rho$ replaced by $\rho/2$ to conclude that
\begin{equation}
\label{DeGiorgipro}
u(x,t)>\mu_-+\tfrac{1}{2}\xi_0\omega\qquad\text{for}\ \ \text{a.e.}\ \ (x,t)\in B_{\frac{1}{4}\rho}(x_0)\times(\tilde t,t_0),\end{equation}
which proves the desired estimate \eqref{osc1}.

In the case $|\mu_-|>\xi_0\omega$ and $q<1$, we infer from $\mu_-\leq\xi_0\omega$ and $|\mu_-|\leq 2\omega$ that
$-2\omega\leq\mu_-\leq-\xi_0\omega$. It follows from \eqref{hattheta} that
$\hat\theta=\omega^{q-1}(\xi_0\omega)^{2-p}$. Since $\tilde t\geq t_0-A\theta\rho^{sp}$ and $\xi_0\leq 4^\frac{q+1-p}{p-2}\left(\frac{1}{2^{sp}A}\nu_1\right)
^\frac{1}{p-2}$, we have
\begin{equation}\label{tildet2}\tilde t+\nu_1\omega^{q-1}(\xi_0\omega)^{2-p}\left(\tfrac{1}{2}\rho\right)^{sp}\geq t_0.\end{equation}
Furthermore, we set $\nu_1^{\prime\prime}=4^{p-q-1}\xi_0^{p-2}$. It can be easily seen that $\nu_1^{\prime\prime}<\nu_1$ and there holds
\begin{equation}\label{tildet22}\tilde t+\nu_1^{\prime\prime}\omega^{q-1}
(\xi_0\omega)^{2-p}\left(\tfrac{1}{2}\rho\right)^{sp}\leq t_0,\end{equation}
since $\tilde t\leq t_0-(\frac{1}{4}\omega)^{q+1-p}(\frac{1}{2}\rho)^{sp}$. In view of \eqref{tildet2}
and \eqref{tildet22}, we find that there exists a constant $\nu\in [\nu_1^{\prime\prime},\nu_1]$, such that
\begin{equation*}
\tilde t+\nu\hat\theta\left(\tfrac{1}{2}\rho\right)^{sp}=
\tilde t+\nu\omega^{q-1}(\xi_0\omega)^{2-p}\left(\tfrac{1}{2}\rho\right)^{sp}= t_0.\end{equation*}
Next, we conclude from \eqref{Alt1Tail} and $\nu_1^{\prime\prime}=4^{p-q-1}\xi_0^{p-2}$ that
\begin{equation*}\begin{split}
\frac{(\frac{1}{2}\rho)^\frac{n\kappa}{p-1}}{\omega^\frac{q+1-p}{m}}&\widetilde{\mathrm{Tail}}_m((u-\mu_-)_-;Q_0)<4^\frac{p-1-q}{m}\xi_0\omega
\\&= (\nu_1^{\prime\prime})^\frac{1}{m}\xi_0^{1-\frac{p-2}{m}}
\omega
\leq\nu^\frac{1}{m}\xi_0^{1-\frac{p-2}{m}}
\omega,
\end{split}\end{equation*}
which implies that \eqref{secondTail} holds with $\xi=\xi_0$ and $\hat\theta=\omega^{q-1}(\xi_0\omega)^{2-p}$.
At this point, we use Lemma \ref{lemmaDeGiorgi2} with $\rho$ replaced by $\rho/2$ to find that \eqref{DeGiorgipro} holds.
This completes the proof
of the proposition.
\end{proof}
\subsection{The second alternative}
In this subsection, we assume that \eqref{2nd} holds for any $t_0-(A-1)\theta\rho^{sp}\leq \bar t\leq t_0$, where $\theta=(\frac{1}{4}\omega)^{q+1-p}$.
It can be easily seen that there exists a time level $t_*\in[\bar t-\theta\rho^{sp},\bar t-\frac{1}{2}\nu_0\theta\rho^{sp}]$,
such that
\begin{equation}\label{t*}\large|\large\{u(\cdot,t_*)\leq\mu_+-\tfrac{1}{4}\omega\large
\}\cap B_\rho(x_0)\large|>\tfrac{1}{2}\nu_0|B_\rho|.\end{equation}
On the other hand, recalling that $|\mu_+|\leq 2\omega$ and $\mu_+\geq \frac{1}{4}\omega$, we get $\frac{1}{4}\omega\leq \mu_+\leq 2\omega$.
In order to proceed further, we first establish a result regarding the propagation of measure theoretical information.
\begin{lemma}\label{measure}
Let $u$ be a bounded weak solution to \eqref{LKut}-\eqref{kernel}
 in the sense of Definition \ref{weak solution}.
Assume that \eqref{t*} holds.
Then, there exists a constant $\xi_1\in (0,\frac{1}{8})$ depending only upon the data, such that if
 \begin{equation}\begin{split}\label{measureTail}
\frac{\rho^\frac{n\kappa}{p-1}}{\theta^\frac{1}{m}}\widetilde{\mathrm{Tail}}_m((u-\mu_+)_+;Q_0)\leq\xi_1\omega,\end{split}\end{equation}
then
\begin{equation}\label{t*bart}\large|\large\{u(\cdot,t)\leq\mu_+-\xi_1\omega\large
\}\cap B_\rho(x_0)\large|\geq\tfrac{1}{4}\nu_0|B_\rho|\end{equation}
holds for all $t\in [t_*,\bar t]$.
\end{lemma}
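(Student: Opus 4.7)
I argue by contradiction: suppose there is $t_\diamond \in [t_*, \bar t]$ with $|\{u(\cdot, t_\diamond) \leq \mu_+ - \xi_1 \omega\} \cap B_\rho(x_0)| < \tfrac{1}{4}\nu_0 |B_\rho|$, equivalently $|\{u(\cdot, t_\diamond) > \mu_+ - \xi_1\omega\} \cap B_\rho| > (1 - \tfrac{1}{4}\nu_0)|B_\rho|$. I apply Lemma~\ref{caclemma} to the truncation $(u-k)_+$ at the intermediate level $k = \mu_+ - 2\xi_1\omega$ (consistent with $\xi_1 < \tfrac{1}{8}$, so that $k > \mu_+ - \tfrac{1}{4}\omega$ and the hypothesis at $t_*$ forces $(u-k)_+(\cdot,t_*) = 0$ on the large set $\{u(\cdot,t_*)\leq \mu_+ - \tfrac{1}{4}\omega\}$), on the cylinder $B_{(1+\sigma)\rho}(x_0) \times (t_*, t_\diamond)$, with a \emph{time-independent} spatial cutoff $\varphi(x)$ supported in $B_{(1+\sigma)\rho}(x_0)$ and equal to $1$ on $B_\rho(x_0)$; this choice kills the $\partial_t \varphi^p$ contribution on the right-hand side of \eqref{Cacinequality}.

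The three remaining terms on the right of \eqref{Cacinequality} are estimated as follows. The initial term at $t_*$ is supported on $\{u(\cdot,t_*)>k\}\cap B_{(1+\sigma)\rho}$, a set of measure at most $(1-\tfrac{1}{2}\nu_0+c_n\sigma)|B_\rho|$ by the hypothesis; pointwise $g_+(u,k)\leq \gamma\xi_1^2\omega^{q+1}$ by Lemma~\ref{equivalent} (using $|u|+|k|\asymp\omega$ from $\mu_+\geq\tfrac{1}{4}\omega$ and $(u-k)_+\leq 2\xi_1\omega$), so the initial contribution is at most $\gamma\xi_1^2\omega^{q+1}(1-\tfrac{1}{2}\nu_0+c_n\sigma)|B_\rho|$. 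The gradient-of-cutoff Gagliardo term yields $\gamma\xi_1^p\sigma^{-p}\omega^{q+1}|B_\rho|$ after the Lipschitz bound $|\varphi(x)-\varphi(y)|\leq c\sigma^{-1}\rho^{-1}|x-y|$, the intrinsic scaling $\bar t - t_*\leq\theta\rho^{sp}$, and $\theta=(\omega/4)^{q+1-p}$. For the tail I split $(u-k)_+^{p-1}\lesssim(u-\mu_+)_+^{p-1}+(\xi_1\omega)^{p-1}$: the first piece vanishes inside $Q_0\supset B_{\rho_0}$ (since $\mu_+=\esssup_{Q_0}u$), so the outside integral reduces to $\int_{\mathbb{R}^n\setminus B_{\rho_0}}$, which H\"older in time and the hypothesis~\eqref{measureTail} absorb via the identity $n\kappa=sp(1-(p-1)/m)$; the total tail contribution is $\gamma\sigma^{-n-sp}\xi_1^p\omega^{q+1}|B_\rho|$. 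On the lower side, at $t_\diamond$ the putative measure estimate forces $(u-k)_+\geq\xi_1\omega$ on $\{u(\cdot,t_\diamond)>\mu_+-\xi_1\omega\}\cap B_\rho$, so Lemma~\ref{equivalent} gives $\int_{B_\rho}g_+(u,k)(\cdot,t_\diamond)\,\mathrm{d}x\geq\gamma^{-1}\xi_1^2\omega^{q+1}(1-\tfrac{1}{4}\nu_0)|B_\rho|$. Combining with the Caccioppoli upper bound and dividing through by $\xi_1^2\omega^{q+1}|B_\rho|$ yields the pivotal inequality
\[
\gamma^{-1}(1-\tfrac{1}{4}\nu_0)\;\leq\;\gamma\bigl[(1-\tfrac{1}{2}\nu_0)+c_n\sigma+\xi_1^{p-2}(\sigma^{-p}+\sigma^{-n-sp})\bigr],
\]
and fixing $\sigma=\sigma(\nu_0,\text{data})$ small followed by $\xi_1=\xi_1(\sigma,\nu_0,\text{data})$ small delivers the contradiction.

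\textbf{Main obstacle.} The delicate issue is that this single-pass contradiction goes through only if the multiplicative constant $\gamma$ coming from Lemma~\ref{equivalent} is moderate enough that $\gamma^{-1}(1-\tfrac{1}{4}\nu_0)>\gamma(1-\tfrac{1}{2}\nu_0)$ survives the $\sigma$- and $\xi_1$-corrections; in general $\gamma$ depends on $q$ and can be large (e.g., for $q$ near zero), whereupon one must replace the $L^2$-type Caccioppoli by a logarithmic energy estimate of DiBenedetto type, testing the equation with $\Psi'(u)\varphi^p$ for a convex $\Psi$ that diverges as $u\to\mu_+$ (such as $\Psi(s)=[\log(H/(H-(s-k)_+ + c))]_+^2$ with $H=\xi_1\omega$); Chebyshev applied to the divergent $\Psi$ then yields the required measure control on $\{u>\mu_+-\xi_1\omega\}$ independently of $\gamma$. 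The nonlocal tail arising in the logarithmic estimate is absorbed by exactly the same \eqref{measureTail}-based machinery as above, so the structure of the argument is unchanged.
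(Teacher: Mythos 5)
Your main argument has a genuine gap, and it is exactly the one you flag yourself: the pivotal inequality
\begin{equation*}
\gamma^{-1}\bigl(1-\tfrac{1}{4}\nu_0\bigr)\;\leq\;\gamma\bigl[(1-\tfrac{1}{2}\nu_0)+c_n\sigma+\xi_1^{p-2}(\sigma^{-p}+\sigma^{-n-sp})\bigr]
\end{equation*}
can never produce a contradiction in this setting. The constant $\gamma=\gamma(q)\geq 1$ from Lemma \ref{equivalent} is a fixed structural constant that need not be close to $1$, while $\nu_0$ is the tiny De Giorgi constant fixed in \eqref{nu0}; the gain between $(1-\tfrac{1}{4}\nu_0)$ and $(1-\tfrac{1}{2}\nu_0)$ is only of size $\tfrac{1}{4}\nu_0$, which cannot beat a factor $\gamma^2$, no matter how you choose $\sigma$ and $\xi_1$. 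So the single-pass contradiction collapses, and the entire weight of the proof falls on your fallback. That fallback (a DiBenedetto-type logarithmic estimate, testing with $\Psi'(u)\varphi^p$) is only asserted, not carried out: no logarithmic Caccioppoli inequality is available in this paper for the nonlocal doubly nonlinear equation, and deriving one is nontrivial — the time term $\partial_t(|u|^{q-1}u)$ must be paired with $\Psi'(u)$ through a bespoke analogue of $g_\pm$ plus a mollification argument, and the nonlocal good/bad terms in the log estimate have to be set up from scratch. Deferring to this unproved estimate is deferring the actual content of the lemma.

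The paper avoids the constant problem by a more elementary device, inherited from the local theory (the proof of \cite[Lemma 5.1]{BDLS}): it never replaces $g_+$ by its crude two-sided equivalent. With the spatial cutoff and the same treatment of the cutoff and tail terms as yours, it works with two nested levels $k=\mu_+-\xi\omega$ and $\tilde k=\mu_+-\tilde\xi\xi\omega$ and keeps the exact one-dimensional integrals, so that the measure of $\{u(\cdot,t)>\tilde k\}$ is bounded by $(1-\tfrac{1}{2}\nu_0)$ times the ratio
\begin{equation*}
\frac{\int_k^{\mu_+}\tau^{q-1}(\tau-k)_+\,\mathrm{d}\tau}{\int_k^{\tilde k}\tau^{q-1}(\tau-k)_+\,\mathrm{d}\tau}\;\leq\;1+C_0\tilde\xi,
\end{equation*}
which tends to $1$ as $\tilde\xi\to0$ because $k\geq\tfrac{1}{8}\omega$ keeps the weight $\tau^{q-1}$ nondegenerate here (this is where ``$u$ away from zero'' is used). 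Choosing first $\tilde\xi$ so that $(1-\tfrac{1}{2}\nu_0)(1+C_0\tilde\xi)\leq 1-\tfrac{3}{8}\nu_0$, then $\sigma=(16n)^{-1}\nu_0$, then $\xi$ small, and setting $\xi_1=\tilde\xi\xi$, gives \eqref{t*bart} directly for every $t\in[t_*,\bar t]$, with no contradiction argument and no logarithmic machinery. If you want to salvage your write-up, replace the crude use of Lemma \ref{equivalent} on both sides by this two-level ratio argument; otherwise you must actually prove the logarithmic energy estimate you invoke.
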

\begin{proof} Let $\xi\in(0,\frac{1}{8})$ be a fixed constant which will be determined in the course of the proof.
For any fixed $t\in (t_*,\bar t)$, we define $Q=B_\rho(x_0)\times (t_*,t)$. Moreover, we set $k=\mu_+-\xi\omega$ and observe that $k\geq \frac{1}{8}\omega>0$. Next, we take a testing function $\varphi(x,t)=\phi(x)$, defined in $B_\rho$, satisfying
$\phi\equiv 1$ in $B_{(1-\sigma)\rho}(x_0)$ and $|D\phi|\leq (\sigma \rho)^{-1}$, where $\sigma\in (0,1)$ is to be determined.
With these choices, we apply the Caccioppoli inequality \eqref{Cacinequality} to deduce
\begin{equation*}\begin{split}
&\int_{B_\rho(x_0)\times\{t\}}g_{+}(u,k)\phi^p \,\mathrm {d}x
\\
&\leq \int_{B_\rho(x_0)\times\{t_*\}}g_{+}(u,k)\phi^p \,\mathrm {d}x
+\gamma\sigma^{-p}\rho^{-sp}\iint_{Q}(u-k)_+^p\,\mathrm {d}x\mathrm {d}t
\\&
+\gamma \sigma^{-n-sp}\int_{t_*}^{\bar t}\int_{\mathbb{R}^n\setminus B_\rho(x_0)}\int_{B_\rho(x_0)}
\frac{(u-k)_+(y,t)^{p-1}}{|y-x_0|^{n+sp}}
(u-k)_+(x,t)\phi(x)^p\,\mathrm {d}x\mathrm {d}y\mathrm {d}t
\\&=:T_1+T_2+T_3
\end{split}\end{equation*}
holds for all $t\in (t_*,\bar t)$. To estimate below the integral
on the left-hand side, we set $\tilde k=\mu_+-\tilde\xi\xi\omega$ and $\xi_1=\tilde \xi\xi$, where $\tilde\xi\in(0,\frac{1}{2})$ is to
be determined later. From \eqref{g+-}, we get
\begin{equation*}\begin{split}
\int_{B_\rho(x_0)\times\{t\}}&g_{+}(u,k)\phi^p \,\mathrm {d}x
=q\int_{B_\rho(x_0)\times\{t\}}\int_k^u\tau^{q-1}(\tau-k)_+\,\mathrm{d}\tau\phi^p \,\mathrm {d}x
\\&\geq q\large|\large\{u(\cdot,t)>\tilde k\large
\}\cap B_{(1-\sigma)\rho}(x_0)\large|\int_k^{\tilde k}\tau^{q-1}(\tau-k)_+\,\mathrm{d}\tau.
\end{split}\end{equation*}
Noting that $\frac{1}{4}\omega\leq\mu_+\leq2\omega$, we obtain
\begin{equation*}\begin{split}
\int_k^{\tilde k}\tau^{q-1}(\tau-k)_+\,\mathrm{d}\tau\geq \gamma\omega^{q-1}(\xi\omega)^2= \gamma \xi^2\omega^{q+1}.
\end{split}\end{equation*}
Recalling that $u\leq\mu_+$ on $B_{\rho}(x_0)\times\{t_*\}$, we infer from \eqref{g+-} and \eqref{t*} that
\begin{equation*}\begin{split}
T_1&\leq \large|\large\{u(\cdot,t_*)\geq k\large
\}\cap B_{\rho}(x_0)\large|\int_k^{\mu_+}\tau^{q-1}(\tau-k)_+\,\mathrm{d}\tau
\\&<\left(1-\tfrac{1}{2}\nu_0
\right)|B_\rho|\int_k^{\mu_+}\tau^{q-1}(\tau-k)_+\,\mathrm{d}\tau.
\end{split}\end{equation*}
Next, we consider the estimate for $T_2$. Noting that $(u-k)_+\leq\xi\omega$ in $Q$ and $\bar t-t_*\leq \theta \rho^{sp}$, we have
$T_2\leq \sigma^{-p}\xi^p\omega^{1+q}|B_\rho|$,
since $\theta=(\frac{1}{4}\omega)^{q+1-p}$. To estimate $T_3$, we observe that $(u-k)_+\leq (u-\mu_+)_++\xi\omega$. It follows that
\begin{equation*}\begin{split}
T_3&\leq \gamma \sigma^{-n-sp}\int_{t_*}^{\bar t}\int_{\mathbb{R}^n\setminus B_\rho(x_0)}\int_{B_\rho(x_0)}
\frac{(\xi\omega)^{p-1}}{|y-x_0|^{n+sp}}
(u-k)_+(x,t)\phi(x)^p\,\mathrm {d}x\mathrm {d}y\mathrm {d}t
\\&+\gamma \sigma^{-n-sp}\int_{t_*}^{\bar t}\int_{\mathbb{R}^n\setminus B_{\rho_0}(x_0)}\int_{B_\rho(x_0)}
\frac{(u-\mu_+)_+(y,t)^{p-1}}{|y-x_0|^{n+sp}}
(u-k)_+(x,t)\phi(x)^p\,\mathrm {d}x\mathrm {d}y\mathrm {d}t
\\&=:T_{3,1}+T_{3,2}.
\end{split}\end{equation*}
We first observe that
$T_{3,1}\leq \gamma\sigma^{-n-sp}\xi^p\omega^{q+1}|B_\rho|$. To estimate $T_{3,2}$, we apply H\"older's inequality and \eqref{measureTail} to obtain
\begin{equation*}\begin{split}
T_{3,2}&\leq \gamma \sigma^{-n-sp}\xi\omega|B_\rho|(\bar t-t_*)^\frac{m-(p-1)}{m}\left[\int_{t_*}^{\bar t}\left(\int_{\mathbb{R}^n\setminus B_{\rho_0}(x_0)}
\frac{(u-\mu_+)_+(y,t)^{p-1}}{|y-x_0|^{n+sp}}
\,\mathrm {d}y\right)^\frac{m}{p-1}\,\mathrm {d}t\right]^\frac{p-1}{m}
\\&\leq \gamma \sigma^{-n-sp}\xi\omega (\theta\rho^{sp})^\frac{m-(p-1)}{m}|B_\rho|\widetilde{\mathrm{Tail}}_m((u-\mu_+)_+;Q_R)^{p-1}
\\&\leq \gamma\sigma^{-n-sp}\xi^p\omega^{q+1}|B_\rho|,
\end{split}\end{equation*}
since $\xi_1<\xi$.
Combining the above estimates, we arrive at
\begin{equation*}\begin{split}
&\large|\large\{u(\cdot,t)>\tilde k\large
\}\cap B_{(1-\sigma)\rho}(x_0)\large|
\\&\leq \left(1-\tfrac{1}{2}\nu_0\right)|B_\rho|\frac{\int_k^{\mu_+}\tau^{q-1}(\tau-k)_+\,\mathrm{d}\tau}
{\int_k^{\tilde k}\tau^{q-1}(\tau-k)_+\,\mathrm{d}\tau}+\gamma\sigma^{-n-sp}\xi^{p-2}|B_\rho|.
\end{split}\end{equation*}
Recalling that $\frac{1}{4}\omega\leq\mu_+\leq2\omega$ and $k\geq\frac{1}{8}\omega$, we find that $\frac{1}{8}\omega\leq \tilde k\leq 3\omega$.
According to the proof of \cite[Lemma 5.1]{BDLS}, we infer that there exists a constant $C_0=C_0(\text{data})>0$, such that\begin{equation*}\begin{split}
\frac{\int_k^{\mu_+}\tau^{q-1}(\tau-k)_+\,\mathrm{d}\tau}
{\int_k^{\tilde k}\tau^{q-1}(\tau-k)_+\,\mathrm{d}\tau}\leq 1+C_0\tilde\xi.
\end{split}\end{equation*}
Consequently, we infer that
\begin{equation*}\begin{split}
&\large|\large\{u(\cdot,t)>\tilde k\large
\}\cap B_{\rho}(x_0)\large|
\\&\leq \left(1-\tfrac{1}{2}\nu_0\right)(1+C_0\tilde\xi)|B_\rho|+\gamma\sigma^{-n-sp}\xi^{p-2}|B_\rho|+n\sigma|B_\rho|.
\end{split}\end{equation*}
At this point, we choose $\tilde\xi=\tilde\xi(\text{data},\nu_0)<\frac{1}{2}$ such that $\left(1-\frac{1}{2}\nu_0\right)(1+C_0\tilde\xi)\leq 1-\frac{3}{8}\nu_0$. Next, we set $\sigma=(16n)^{-1}\nu_0$ and then choose $\xi<\frac{1}{8}$ such that $\gamma\sigma^{-n-sp}\xi^{p-2}<\frac{1}{16}\nu_0$.
With the choice of $\xi_1=\tilde\xi\xi$, we
conclude that the lemma holds.
\end{proof}
\begin{remark}\label{largetime}
Noting that $\bar t$ is an arbitrary time level in $[t_0-(A-1)\theta\rho^{sp},t_0]$, then we conclude that if \eqref{measureTail} holds then
the estimate \eqref{t*bart} holds for all $t\in[t_0-(A-1)\theta\rho^{sp},t_0]$.
\end{remark}
Let $\sigma_*<1$ be a constant such that $A=1+\sigma_*^{-(p-2)}\xi_1^{2-p}$, where $\xi_1$ is the constant from Lemma \ref{measure}.
Next, we set $\tilde\theta=\theta\xi_1^{2-p}
=\left(\frac{1}{4}\omega\right)^{q+1-p}\xi_1^{2-p}$.
For simplicity of notation, we abbreviate $Q_A=Q_\rho^{(A-1)\tilde\theta}(z_0)=Q_\rho^{\sigma_*^{-(p-2)}\tilde\theta}(z_0)$.
Our next goal is to establish the following measure shrinking lemma.
\begin{lemma}\label{measureshrinkinglemma} Let $u$ be a bounded weak solution to \eqref{LKut}-\eqref{kernel}
 in the sense of Definition \ref{weak solution}.
Assume that \eqref{t*} holds.
Moreover, suppose that
 \begin{equation}\begin{split}\label{measureTail1}
\frac{\rho^\frac{n\kappa}{p-1}}{(\sigma_*^{2-p}
\tilde\theta)^\frac{1}{m}}\widetilde{\mathrm{Tail}}_m((u-\mu_+)_+;Q_0)\leq \sigma_*\xi_1\omega.\end{split}\end{equation}
There exists a constant $\gamma$ that can be determined a priori only in terms of the data such that
\begin{equation}\label{measureQ}\large|\large\{u\geq\mu_+-\tfrac{1}{4}\xi_1\sigma_*\omega\large
\}\cap Q_A\large|\leq\gamma \sigma_*^{p-1}\nu_0^{-1}|Q_A|.\end{equation}
\end{lemma}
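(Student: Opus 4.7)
The plan is to apply Lemma \ref{caclemma} to the truncation $(u-k)_+$ at the intermediate level $k=\mu_+-\xi_1\sigma_*\omega$, which lies strictly between the measure-information level $\mu_+-\xi_1\omega$ coming from Lemma \ref{measure} together with Remark \ref{largetime}, and the target level $k_*=\mu_+-\tfrac{1}{4}\xi_1\sigma_*\omega$. The shrinking factor $\sigma_*^{p-1}$ is expected to emerge from a careful balance: one power of $\sigma_*$ is gained on the left-hand side through $k_*-k\simeq\sigma_*\xi_1\omega$, while two powers of $\sigma_*$ are absorbed on the right-hand side through $(u-k)_+\le\sigma_*\xi_1\omega$ and the length $(A-1)\theta\rho^{sp}=\sigma_*^{-(p-2)}\xi_1^{2-p}\theta\rho^{sp}$ of the time interval; the quotient of the two contributes exactly $\sigma_*^{p-1}$.

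First I would verify that \eqref{measureTail1} implies \eqref{measureTail} (for $\sigma_*$ sufficiently small relative to $\xi_1$), so that Lemma \ref{measure} combined with Remark \ref{largetime} gives $|\{u(\cdot,t)\le\mu_+-\xi_1\omega\}\cap B_\rho(x_0)|\ge\tfrac{\nu_0}{4}|B_\rho|$ for every $t\in[t_0-(A-1)\theta\rho^{sp},t_0]$. On this set one has $(u-k)_-\ge(1-\sigma_*)\xi_1\omega\ge\tfrac{1}{2}\xi_1\omega$. I would then apply Lemma \ref{caclemma} on $B_R(x_0)\times(t_0-(A-1)\theta\rho^{sp},t_0)$ with $R\simeq 2\rho$ and a time-independent cutoff $\phi$ supported in $B_R$ satisfying $\phi\equiv1$ on $B_\rho$ and $|D\phi|\le\gamma/\rho$; the $t$-independence kills the $\partial_t\varphi^p$ term in \eqref{Cacinequality}.

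The pivotal step is the lower bound for the second left-hand term of \eqref{Cacinequality}. For $x\in B_\rho$ the inner $y$-integral over $B_R$ is at least $c\nu_0(\xi_1\omega)^{p-1}\rho^{-sp}$, since $|x-y|\le 3\rho$ for $y\in B_\rho$ and $(u-k)_-\ge\tfrac{1}{2}\xi_1\omega$ on a subset of $B_\rho$ of measure at least $\tfrac{\nu_0}{4}|B_\rho|$. Combining with $(u-k)_+\ge k_*-k=\tfrac{3}{4}\sigma_*\xi_1\omega$ on $\{u\ge k_*\}$, the second left-hand term is bounded below by $c\nu_0\xi_1^p\sigma_*\omega^p\rho^{-sp}|\{u\ge k_*\}\cap Q_A|$. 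On the right-hand side each surviving contribution should be of order $\gamma\xi_1^2\sigma_*^2\omega^{q+1}|B_\rho|$: the initial data term via Lemma \ref{equivalent}, using that $|u|+|k|\simeq\omega$ on $\{u\ge k\}$ (a consequence of $\mu_+\ge\omega/4$ and $\xi_1\sigma_*\le 1/8$); the gradient-of-cutoff term via a direct computation invoking $(A-1)\theta=\sigma_*^{-(p-2)}\xi_1^{2-p}(\omega/4)^{q+1-p}$; and the tail term via the splitting $(u-k)_+(y)\le(u-\mu_+)_+(y)+\sigma_*\xi_1\omega$, bounding the local piece directly and the genuine tail by H\"older's inequality in time against \eqref{measureTail1}.

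The hardest piece will be the tail estimate. The scaling $\sigma_*^{2-p}\tilde\theta$ inside \eqref{measureTail1} is calibrated precisely so that, after raising the tail to the $(p-1)$-power, H\"older-pairing with $((A-1)\theta\rho^{sp})^{(m-(p-1))/m}$, and invoking the identity $n\kappa=sp(m-(p-1))/m$, the $\rho$- and $\theta$-exponents cancel and the $\sigma_*$-powers combine exactly to $\sigma_*^2$; this matches the other two right-hand side contributions. Dividing the resulting inequality by $c\nu_0\xi_1^p\sigma_*\omega^p/\rho^{sp}$ and comparing with $|Q_A|\simeq\sigma_*^{-(p-2)}\xi_1^{2-p}\omega^{q+1-p}\rho^{sp}|B_\rho|$ then produces exactly the desired $|\{u\ge k_*\}\cap Q_A|\le\gamma\sigma_*^{p-1}\nu_0^{-1}|Q_A|$.
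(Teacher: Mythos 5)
Your proposal is correct and follows essentially the same route as the paper: an energy estimate (Lemma \ref{caclemma}) with a time-independent cutoff at an intermediate level $k\simeq\mu_+-c\,\sigma_*\xi_1\omega$, a lower bound on the mixed nonlocal term via Lemma \ref{measure} and Remark \ref{largetime}, and upper bounds on the initial, cutoff-gradient and tail terms (the last via H\"older in time against \eqref{measureTail1}), with exactly the paper's $\sigma_*$-bookkeeping leading to $\sigma_*^{p-1}\nu_0^{-1}$. The only differences are cosmetic (the paper takes $k=\mu_+-\tfrac12\sigma_*\xi_1\omega$, and your caveat that \eqref{measureTail1} yields \eqref{measureTail} for $\sigma_*$ small relative to $\xi_1$ is, if anything, more careful than the paper's "since $\sigma_*<1$").
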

\begin{proof} First, we set $k=\mu_+-\frac{1}{2}\sigma_*\xi_1\omega$ and $\tilde Q_A=B_{2\rho}(x_0)\times (t_0-\sigma_*^{-(p-2)}\tilde\theta\rho^{sp},t_0)$. Next, we take a smooth cutoff function $0\leq\phi(x)\leq1$, supported in $B_{\frac{3}{2}\rho}(x_0)$, satisfying
$\phi\equiv 1$ in $B_{\rho}(x_0)$ and $|D\phi|\leq 2\rho^{-1}$.
Write the energy estimate \eqref{Cacinequality} over
the cylinder $\tilde Q_A$ for the truncated function $(u-k)_+$. This leads to
\begin{equation*}\begin{split}
&T_0:=\iint_{Q_A}(u-k)_+(x,t)\left(\int_{B_\rho(x_0)}\frac{(u-k)_-(y,t)^{p-1}
}{|x-y|^{n+sp}}\,\mathrm{d}y\right)\,\mathrm {d}x\mathrm {d}t
\\
&\leq \int_{B_{2\rho}(x_0)\times\{t_0-\sigma_*^{-(p-2)}\tilde\theta\rho^{sp}\}}g_+(u,k)\phi^p \,\mathrm {d}x
+\gamma\rho^{-sp}\iint_{\tilde Q_A}(u-k)_+^p\,\mathrm {d}x\mathrm {d}t
\\&
+\gamma \int_{t_0-\sigma_*^{-(p-2)}\tilde\theta\rho^{sp}}^{t_0}\int_{\mathbb{R}^n\setminus B_{2\rho}(x_0)}\int_{B_{2\rho}(x_0)}
\frac{(u-k)_+(y,t)^{p-1}}{|y-x_0|^{n+sp}}
(u-k)_+(x,t)\phi(x)^p\,\mathrm {d}x\mathrm {d}y\mathrm {d}t
\\&=:T_1+T_2+T_3.
\end{split}\end{equation*}
To estimate below the integral
on the left-hand side, we infer from \eqref{measureTail1} and $\sigma_*<1$ that \eqref{measureTail} holds.
According to
Remark \ref{largetime}, we find that
\begin{equation*}\begin{split}
&\int_{B_\rho(x_0)}\frac{(u-k)_-(y,t)^{p-1}
}{|x-y|^{n+sp}}\,\mathrm{d}y\geq \int_{B_\rho(x_0)\cap \{u\leq \mu_+-\xi_1\omega\}}\frac{(u-k)_-(y,t)^{p-1}
}{|x-y|^{n+sp}}\,\mathrm{d}y
\\&\geq \large|\large\{u(\cdot,t)\leq\mu_+-\xi_1\omega\large
\}\cap B_\rho(x_0)\large|\frac{[(1-\tfrac{1}{2}\sigma_*)\xi_1\omega]^{p-1}}{\rho^{n+sp}}
\geq \gamma \nu_0\frac{(\xi_1\omega)^{p-1}}{\rho^{sp}}
\end{split}\end{equation*}
holds for all $(x,t)\in Q_A$. It follows that
\begin{equation*}\begin{split}
T_0
&\geq \gamma \nu_0\frac{(\xi_1\omega)^{p-1}}{\rho^{sp}}\iint_{Q_A\cap\{u\geq\mu_+-\frac{1}{4}\sigma_*\xi_1\omega
\}}(u-k)_+\,\mathrm {d}x\mathrm {d}t
\\&\geq \gamma\nu_0\sigma_*(\xi_1\omega)^p\rho^{-sp}\large|\large\{u\geq\mu_+-\tfrac{1}{4}\xi_1\sigma_*\omega\large
\}\cap Q_A\large|.
\end{split}\end{equation*}
Next, we consider the estimate for $T_2$. Since $(u-k)_+\leq \frac{1}{2}\sigma_*\xi_1\omega$ in $\tilde Q_A$, we have $T_2\leq \gamma \rho^{-sp} (\sigma_*\xi_1\omega)^p|Q_A|$. To estimate $T_1$, we note that $\frac{1}{4}\omega\leq\mu_+\leq2\omega$ and hence $\frac{1}{16}\omega\leq
k\leq3\omega$, since $k=\mu_+-\frac{1}{2}\sigma_*\xi_1\omega$. We apply Lemma \ref{equivalent} to obtain
\begin{equation*}\begin{split}
T_1&\leq \gamma\int_{B_{2\rho}(x_0)\times\{t_0-\sigma_*^{-(p-2)}\tilde\theta\rho^{sp}\}}(|u|+|k|)^{q-1}(u-k)_+^2 \,\mathrm {d}x
\\&\leq \gamma\omega^{q-1}(\sigma_*\xi_1\omega)^2|B_\rho|=\gamma\omega^{q-1}(\sigma_*\xi_1\omega)^2\frac{|Q_A|}{\sigma_*^{-(p-2)}\tilde\theta\rho^{sp}}
\\&=\gamma\omega^{q-1}(\sigma_*\xi_1\omega)^2\frac{|Q_A|}{\sigma_*^{-(p-2)}\xi_1^{2-p}\omega^{q+1-p}\rho^{sp}}=\gamma\frac{(\sigma_*\xi_1\omega)^p}
{\rho^{sp}}|Q_A|,
\end{split}\end{equation*}
since $\tilde\theta
=\left(\frac{1}{4}\omega\right)^{q+1-p}\xi_1^{2-p}$.
Finally, we turn our attention to the estimate of $T_3$. To this end, we observe that $(u-k)_+\leq (u-\mu_+)_++\frac{1}{2}\sigma_*
\xi_1\omega$ and there holds
\begin{equation*}\begin{split}
T_3&\leq \gamma\frac{(\sigma_*\xi_1\omega)^p}
{\rho^{sp}}|Q_A|
\\&+\gamma \int_{t_0-\sigma_*^{-(p-2)}\tilde\theta\rho^{sp}}^{t_0}\int_{\mathbb{R}^n\setminus B_{\rho_0}(x_0)}\int_{B_\rho(x_0)}
\frac{(u-\mu_+)_+(y,t)^{p-1}}{|y|^{n+sp}}
(u-k)_+(x,t)\,\mathrm {d}x\mathrm {d}y\mathrm {d}t.
\end{split}\end{equation*}
To proceed further, we denote by $T_{3,1}$ the second term of the right-hand side of the above inequality.
We use H\"older's inequality and \eqref{measureTail1} to infer that
\begin{equation*}\begin{split}
T_{3,2}&\leq \gamma \sigma_*\xi_1\omega|B_\rho|\left(\frac{\tilde\theta\rho^{sp}}{\sigma_*^{p-2}}\right)^\frac{m-(p-1)}{m}
\left[\int_{t_0-\sigma_*^{-(p-2)}\tilde\theta\rho^{sp}}^{ t_0}\left(\int_{\mathbb{R}^n\setminus B_{\rho_0}(x_0)}
\frac{(u-\mu_+)_+(y,t)^{p-1}}{|y-x_0|^{n+sp}}
\,\mathrm {d}y\right)^\frac{m}{p-1}\,\mathrm {d}t\right]^\frac{p-1}{m}
\\&\leq \gamma \sigma_*\xi_1\omega|B_\rho|(\sigma_*^{-(p-2)}\tilde\theta\rho^{sp})^\frac{m-(p-1)}{m}\widetilde{\mathrm{Tail}}_m((u-\mu_+)_+;Q_0)^{p-1}
\\&\leq \gamma\sigma_*^{2-p}\tilde\theta(\sigma_*\xi_1\omega)^p|B_\rho|=\gamma \rho^{-sp} (\sigma_*\xi_1\omega)^p|Q_A|,
\end{split}\end{equation*}
since $(t_0-\sigma_*^{-(p-2)}\tilde\theta\rho^{sp},t_0)=(t_0-(A-1)\theta\rho^{sp},t_0)\subset (t_0-A\theta\rho_0^{sp},t_0)$.
This implies that $T_3\leq \gamma \rho^{-sp} (\sigma_*\xi_1\omega)^p|Q_A|$. Combining the estimates for $T_0$-$T_3$, we obtain the desired estimate
\eqref{measureQ}. This completes the proof.
\end{proof}
At this point, we introduce a De Giorgi-type lemma for the second alternative.
This lemma can be proved in a similar way as Lemma \ref{lemmaDeGiorgi1},
and we only focus on the
difficulties that arise for the second alternative.
\begin{lemma}\label{lemmaDeGiorgi3}
Let $u$ be a locally bounded weak solutions to \eqref{LKut}-\eqref{kernel}
 in the sense of Definition \ref{weak solution}. Assume that \eqref{t*} and \eqref{measureTail1} hold.
 There exists a constant $\nu_2\in(0,1)$, depending only on the data, such that if
\begin{equation}\label{2nd Degiorgi assumption}\left|Q_A\cap\left\{u\geq\mu_+-\tfrac{1}{4}\sigma_*\xi_1\omega\right\}\right
|\leq \nu_2|Q_A|,\end{equation}
then
\begin{equation}
\label{DeGiorgi3}u(x,t)\leq\mu_+-\tfrac{1}{8}\sigma_*\xi_1\omega\qquad\text{for}\ \ \text{a.e.}\ \ (x,t)\in Q_A^\prime,\end{equation}
where  $Q_A^\prime=B_{\frac{1}{2}\rho}(x_0)\times (t_0-\sigma_*^{-(p-2)}\tilde\theta(\frac{1}{2}\rho)^{sp},t_0)$.
\end{lemma}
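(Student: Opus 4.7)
The plan is to follow the scheme of Lemma \ref{lemmaDeGiorgi1}, now with an increasing sequence of levels approaching $\mu_+$ from below and with the intrinsic time factor $\sigma_*^{-(p-2)}\tilde\theta$ in place of $\theta$. Concretely, I define $k_j = \mu_+ - \tfrac{1}{8}\sigma_*\xi_1\omega - 2^{-3-j}\sigma_*\xi_1\omega$, the nested radii $\rho_j = \tfrac{1}{2}\rho + 2^{-j-1}\rho$, together with the auxiliary half-steps $\tilde k_j$, $\tilde\rho_j$, $\hat\rho_j$ as in Lemma \ref{lemmaDeGiorgi1}, and cutoff functions $\varphi_j$ adapted to the shrinking cylinders $Q_{\tilde\rho_j}^{(\sigma_*^{-(p-2)}\tilde\theta)}(z_0)$. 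Applying Lemma \ref{caclemma} to the truncations $(u-k_j)_+$ and rescaling time by $v(x,t) = u(x_0+x,\sigma_*^{-(p-2)}\tilde\theta\,t+t_0)$ produces energy bounds on fixed-shape cylinders $Q_{\tilde\rho_j}$ in the spirit of the estimate for $T_{0,1}+T_{0,2}$.

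The key calibration is that every term on the right-hand side should exhibit the natural homogeneity $(\sigma_*\xi_1\omega)^p/\rho^{sp}$. For the term $T_1$ involving $g_+(v,k_j)|\partial_t\varphi^p|$, I use that $k_j \geq \tfrac{1}{4}\omega - \tfrac{1}{4}\sigma_*\xi_1\omega > 0$ and $|v| \leq \mu_+ \leq 2\omega$ on $\{v > k_j\}$, whence $|v|+|k_j| \leq \gamma\omega$ on this set; Lemma \ref{equivalent} then gives $g_+(v,k_j) \leq \gamma\omega^{q-1}(\sigma_*\xi_1\omega)^2\chi_{\{v>k_j\}}$, and the identity $\sigma_*^{-(p-2)}\tilde\theta\cdot\omega^{q-1}(\sigma_*\xi_1\omega)^2 = \gamma(\sigma_*\xi_1\omega)^p$ delivers the desired factor. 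The local term $T_2$ is immediate since $(v-k_j)_+ \leq \tfrac{1}{4}\sigma_*\xi_1\omega$ in $Q_\rho$. The tail term $T_3$ is handled as in Lemma \ref{lemmaDeGiorgi1}, splitting $(v-k_j)_+ \leq (v-\mu_+)_+ + \sigma_*\xi_1\omega$ and using the tail hypothesis \eqref{measureTail1} to bound the outer contribution by $(\sigma_*\xi_1\omega)^p/\rho^{sp}$ times $|Q_{\rho_j}|(Y_j+Z_j^{1+\kappa})$.

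For the Sobolev step producing $Y_{j+1}$, the crucial lower bound is $g_+(v,k_j) \geq \gamma 2^{-qj}\sigma_*^{-(p-2)}\tilde\theta\,(v-k_j)_+^p$ on $\{v > \tilde k_j\}\cap Q_{\rho_{j+1}}$. Indeed there $|v|+|k_j| \geq v-k_j \geq \tilde k_j - k_j = 2^{-j-4}\sigma_*\xi_1\omega$ while $|v|+|k_j| \leq 4\omega$, and $(v-k_j)_+ \leq \tfrac{1}{4}\sigma_*\xi_1\omega$; since $p>2$, Lemma \ref{equivalent} gives
\[
g_+(v,k_j) \geq \gamma\omega^{q-1}(v-k_j)_+^{2-p}(v-k_j)_+^p \geq \gamma\omega^{q-1}(\sigma_*\xi_1\omega)^{2-p}(v-k_j)_+^p,
\]
which by the definition $\tilde\theta = \xi_1^{2-p}(\omega/4)^{q+1-p}$ is exactly the claimed bound. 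Combining this with Lemma \ref{embedding} and Lemma \ref{b<a} produces the iterative inequalities
\[
Y_{j+1} \leq \gamma 2^{j(2p+n+q)}\bigl(Y_j^{1+\frac{sp}{n+sp}} + Y_j^{\frac{sp}{n+sp}}Z_j^{1+\kappa}\bigr),\qquad Z_{j+1} \leq \gamma 2^{j(n+2p+q)}\bigl(Y_j+Z_j^{1+\kappa}\bigr),
\]
identical in form to \eqref{Yj+1}--\eqref{Zj+1}. Choosing $\nu_2$ in the spirit of \eqref{nu0} and invoking the fast geometric convergence lemma yields $Y_j \to 0$, and transforming back to $u$ gives \eqref{DeGiorgi3}.

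The main obstacle is bookkeeping the scaling: the enlarged vertical factor $\sigma_*^{-(p-2)}\tilde\theta$ in the time direction, together with the subtle definition of $\tilde\theta$, must conspire with the tail hypothesis \eqref{measureTail1} so that the energy term, the local $L^p$ term, and the nonlocal tail term all share the common homogeneity $(\sigma_*\xi_1\omega)^p/\rho^{sp}$. Once this intrinsic scaling is tracked consistently, the De Giorgi iteration closes in exactly the same way as in Lemma \ref{lemmaDeGiorgi1}, and no new mechanism is required beyond the already-established Caccioppoli inequality and Sobolev embedding.
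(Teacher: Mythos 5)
Your proposal is correct and follows essentially the same route as the paper's proof: the same Caccioppoli inequality applied to $(u-k_j)_+$, the rescaling $v(x,t)=u(x_0+x,\hat\theta t+t_0)$ with $\hat\theta=\sigma_*^{-(p-2)}\tilde\theta=(\sigma_*\xi_1)^{2-p}\theta$, the same quantities $Y_j$, $Z_j$, the same use of the tail hypothesis \eqref{measureTail1}, the lower bound $g_+\geq\gamma\hat\theta(v-k_j)_+^p$ in the Sobolev step, and fast geometric convergence. Two harmless slips: the calibration for the time-derivative term should read $\hat\theta^{-1}\omega^{q-1}(\sigma_*\xi_1\omega)^2=\gamma(\sigma_*\xi_1\omega)^p$ (the reciprocal of the factor you wrote), and for $q\geq1$ the bound $(|v|+|k_j|)^{q-1}\geq\gamma\omega^{q-1}$ should be obtained from $|k_j|\geq\tfrac{1}{32}\omega$ as in \eqref{vandkj}, not from $\tilde k_j-k_j\sim 2^{-j}\sigma_*\xi_1\omega$, since the latter would make the constant degenerate in $\sigma_*$ and $j$.
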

\begin{proof}
Let $\hat \theta=\sigma_*\tilde\theta=(\sigma_*\xi_1)^{2-p}\theta$.
For $j=0,1,2,\cdots$, we define the sequences $k_j=\mu_+-2^{-3}\sigma_*\xi_1\omega-2^{-j-4}\sigma_*\xi_1\omega$, $\tilde k_j=\frac{1}{2}(k_j+k_{j+1})$,
$\rho_j=2^{-1}\rho+2^{-j-1}\rho$, $\tilde\rho_j=\frac{1}{2}(\rho_j+\rho_{j+1})$ and $\hat\rho_j=\frac{1}{2}(\tilde\rho_j+\rho_{j+1})$.
Moreover, we take a cutoff function
$0\leq\varphi_j\leq1$, such that $\varphi_j=1$ in $Q_{\rho_{j+1}}^{(\hat\theta)}(z_0)$, $\varphi_j=0$ on $\partial_PQ_{\hat\rho_j}^{(\hat\theta)}(z_0)$,
\begin{equation*}\begin{split}
|D\varphi_j|\leq 2^{j+3}\rho^{-1}\qquad\text{and}\qquad
|\partial_t\varphi_j|\leq \gamma\hat\theta^{-1}2^j\rho^{-sp}.
\end{split}\end{equation*}
With the choice of $\varphi=\varphi_j$ in Lemma \ref{caclemma}, we first apply
 the Caccioppoli inequality to the truncated functions $(u-k_j)_+$ over the cylinder $Q_{\tilde\rho_j}^{(\hat\theta)}(z_0)$
and subsequently make a
change of variable $v(x,t)=u(x_0+x,t_0+\hat\theta t)$.
 This leads to
\begin{equation*}\begin{split}
T_{0,1}&+T_{0,2}:=\hat\theta^{-1}\esssup_{-\tilde\rho_j^{sp}<t<0}\int_{B_{\rho_{j+1}}\times\{t\}}g_+(v,k_j) \,\mathrm {d}x
\\
&+\int_{-\tilde\rho_j^{sp}}^{0}\iint_{B_{\rho_{j+1}}\times B_{\rho_{j+1}}}
\frac{|(v-k_j)_+(x,t)-(v-k_j)_+(y,t)|^p}{|x-y|^{n+sp}}\,\mathrm {d}x\mathrm {d}y\mathrm {d}t
\\
&\leq \gamma 2^j\hat\theta^{-1}\rho^{-sp}
\iint_{Q_{\tilde\rho_j}} g_+(v,k_j)\,\mathrm {d}x\mathrm {d}t
+\gamma\frac{2^{jp}}{\rho^{sp}}\iint_{Q_{\tilde\rho_j}}(v-k_j)_+^p
\,\mathrm {d}x\mathrm {d}t
\\&
+\gamma 2^{(n+sp)j}\int_{-\tilde\rho_j^{sp}}^0\int_{\mathbb{R}^n\setminus B_{\tilde \rho_j}}\int_{B_{\tilde \rho_j}}
\frac{(v-k_j)_+(y,t)^{p-1}}{|y|^{n+sp}}(v-k_j)_+(x,t)\,\mathrm {d}x\mathrm {d}y\mathrm {d}t
\\&=:T_1+T_2+T_3.
\end{split}\end{equation*}
At this point, for  $j=0,1,2,\cdots$, we set
\begin{equation*}\begin{split}
Y_j=\frac{1}{|Q_{\rho_j}|}|\{(x,t)\in Q_{\rho_j}:v(x,t)\geq k_j\}|
\end{split}\end{equation*}
and
\begin{equation*}\begin{split}
Z_j=\left(\fint_{-\rho_j^{sp}}^0\left(\frac{1}{|B_j|}|A_+(v;0,\rho_j,k_j)|\right)^\frac{m}{m-(p-1)}\,\mathrm{d}t\right)^\frac{m-(p-1)}{m(1+\kappa)}.
\end{split}\end{equation*}
Since $(v-k_j)_+\leq \frac{1}{4}\sigma_*\xi_1\omega$ in $Q_\rho$, we have $T_2\leq\gamma 2^{jp}\rho^{-sp}(\sigma_*\xi_1\omega)^p|Q_{\rho_j}|Y_j$.
To estimate $T_1$, we first note that
$\mu_+-\sigma_*\xi_1\omega\leq v\leq \mu_+$
on the set $Q_{\rho_j}\cap \{v\geq k_j\}$. Noting that $\frac{1}{4}\omega\leq\mu_+\leq2\omega$, we get
\begin{equation}\label{vandkj}\tfrac{1}{32}\omega\leq|k_j|\leq |v|+|k_j|\leq 2|\mu_+|+2\sigma_*\xi_1\omega\leq 6\omega\end{equation}
on the set $Q_{\rho_j}\cap \{v\geq k_j\}$.
We now apply
Lemma \ref{equivalent} to obtain
\begin{equation*}\begin{split}
g_+(v,k_j)&\leq\gamma(|v|+|k_j|)^{q-1}(v-k_j)_+^2
\leq \gamma \omega^{q-1}(\sigma_*\xi_1\omega)^2.
\end{split}
\end{equation*}
Recalling that $\hat \theta=(\sigma_*\xi_1)^{2-p}\theta$ and $\theta=\left(\frac{1}{4}\omega\right)^{q+1-p}$, we have
\begin{equation*}\begin{split}
T_1\leq \gamma \rho^{-sp}\frac{2^j}{(\sigma_*\xi_1)^{2-p}\omega^{q+1-p}}\omega^{q-1}(\sigma_*\xi_1\omega)^2
Y_j|Q_{\rho_j}|=\gamma 2^{jp}\rho^{-sp}(\sigma_*\xi_1\omega)^p|Q_{\rho_j}|Y_j.
\end{split}
\end{equation*}
Finally, we consider the estimate of $T_3$. To this end, we apply H\"older's inequality and \eqref{measureTail1} to obtain
\begin{equation*}\begin{split}
T_3&\leq \gamma 2^{j(n+sp)}\rho^{-sp}(\sigma_*\xi_1\omega)^p|Q_{\rho_j}|Y_j
\\&\quad+2^{j(n+sp)}\hat\theta^{-\frac{p-1}{m}}\widetilde{\mathrm{Tail}}_m((u-\mu_+)_+;Q_0)
^{p-1}(\sigma_*\xi_1\omega) \rho^{sp\frac{m-(p-1)}{m}}|B_\rho|Z_j^{1+\kappa}
\\&\leq \gamma 2^{j(n+sp)}\rho^{-sp}(\sigma_*\xi_1\omega)^p|Q_{\rho_j}|(Y_j+Z_j^{1+\kappa}),
\end{split}
\end{equation*}
since $\hat \theta=\sigma_*\tilde\theta$.
Combining these inequalities, we arrive at
\begin{equation*}\begin{split}
T_{0,1}+T_{0,2}\leq \gamma 2^{j(n+sp)}\rho^{-sp}(\sigma_*\xi_1\omega)^p|Q_{\rho_j}|(Y_j+Z_j^{1+\kappa}).
\end{split}
\end{equation*}
On the other hand, we use H\"older's inequality and Lemma \ref{embedding} to deduce
\begin{equation*}\begin{split}
Y_{j+1}|Q_{\rho_{j+1}}|&\leq \iint_{Q_{\rho_{j+1}}}\frac{(v-\tilde k_j)_+^p}{(k_{j+1}-\tilde k_j)^p}\,\mathrm{d}x\mathrm{d}t
\\&\leq \gamma 2^{jp}(\sigma_*\xi_1\omega)^{-p}|Q_{\rho_{j+1}}|^\frac{sp}{n+sp}Y_j^\frac{sp}{n+sp}\left[
\int_{-\rho_{j+1}^{sp}}^0\left[(v-\tilde k_j)_+(\cdot,t)\right]_{W^{s,p}(B_{\rho_{j+1}})}^p\,\mathrm{d}t
\right.
\\&\left.
+\rho_{j+1}^{-sp}\|(v-\tilde k_j)_+\|_{L^p(Q_{\rho_{j+1}})}^p+\esssup_{-\rho_{j+1}^{sp}
<t<0}\int_{B_{\rho_{j+1}}}(v-\tilde k_j)_+^p(x,t)\,\mathrm{d}x
\right]
\\&=:T_4+T_5+T_6.
\end{split}
\end{equation*}
To estimate $T_4$, we note that
$-\tilde k_j\leq -k_j$, $(v-\tilde k_j)_+=(\tilde k_j-v)_-=(-v-(-\tilde k_j))_-$ and $(v-k_j)_+=(-v-(-k_j))_-$.
We use Lemma \ref{b<a} to obtain
\begin{equation*}\begin{split}
&\left[(v-\tilde k_j)_+(\cdot,t)\right]_{W^{s,p}(B_{\rho_{j+1}})}
=\left[(-v-(-\tilde k_j))_-(\cdot,t)\right]_{W^{s,p}(B_{\rho_{j+1}})}
\\&\leq \left[(-v-(- k_j))_-(\cdot,t)\right]_{W^{s,p}(B_{\rho_{j+1}})}=\left[(v- k_j)_+(\cdot,t)\right]_{W^{s,p}(B_{\rho_{j+1}})}.
\end{split}
\end{equation*}
It follows that
\begin{equation*}\begin{split}
T_4\leq \gamma 2^{jp}(\sigma_*\xi_1\omega)^{-p}|Q_{\rho_{j+1}}|^\frac{sp}{n+sp}Y_j^\frac{sp}{n+sp}T_{0,2}\leq \gamma 2^{(n+2p)j}
|Q_{\rho_j}|Y_j^\frac{sp}{n+sp}(Y_j+Z_j^{1+\kappa}).
\end{split}
\end{equation*}
Recalling that $(v-k_j)_+\leq \frac{1}{4}\sigma_*\xi_1\omega$ in $Q_\rho$, we get $T_5\leq \gamma 2^{jp}Y_j^{1+\frac{sp}{n+sp}}|Q_{\rho_{j+1}}|$.
Finally, we consider the estimate for $T_6$. To this end, we infer from \eqref{vandkj} and Lemma \ref{equivalent} that
\begin{equation*}\begin{split}
g_+(v,k_j)&\geq \gamma(|v|+|k_j|)^{q-1}(v-k_j)_+^2\geq \gamma\omega^{q-1}(v-k_j)_+^{2-p}(v-k_j)_+^p
\\&\geq \gamma \omega^{q-1}(\sigma_*\xi_1\omega)^{2-p}(v-k_j)_+^p
\\&=\gamma (\sigma_*\xi_1)^{2-p}\theta(v-k_j)_+^p=\gamma \hat\theta (v-k_j)_+^p,
\end{split}
\end{equation*}
since $p>2$, $\theta=\left(\frac{1}{4}\omega\right)^{q+1-p}$ and $\hat \theta=(\sigma_*\xi_1)^{2-p}\theta$.
It follows that
\begin{equation*}\begin{split}
T_6&\leq \gamma \gamma 2^{jp}(\sigma_*\xi_1\omega)^{-p}|Q_{\rho_{j+1}}|^\frac{sp}{n+sp}Y_j^\frac{sp}{n+sp} T_{0,1}
\\&\leq \gamma 2^{j(n+2p)}|Q_{\rho_j}|Y_j^\frac{sp}{n+sp}(Y_j+Z_j^{1+\kappa}).
\end{split}
\end{equation*}
Consequently, we infer that
\begin{equation}\begin{split}\label{2ndYj+1}
Y_{j+1}\leq \gamma 2^{j(2p+n)}\left(Y_j^{1+\frac{sp}{n+sp}}+Y_j^\frac{sp}{n+sp}Z_j^{1+\kappa}\right).
\end{split}
\end{equation}
To estimate $Z_{j+1}$,
we proceed similarly as in the proof of \cite[Lemma 4.1]{BK}. This leads to
\begin{equation}\begin{split}\label{2ndZj+1}
Z_{j+1}&\leq \gamma 2^{(n+2p)j}\left(Y_j+Z_j^{1+\kappa}\right).
\end{split}
\end{equation}
At this stage, we choose
\begin{equation}\begin{split}\label{nu2}
\nu_2=(4\gamma)^{-\frac{m(1+\kappa)}{(m-(p-1))\zeta}}2^{-(n+2p)\frac{m(1+\kappa)}{(m-(p-1))\zeta^2}},
\end{split}
\end{equation}
where $\zeta=\min\left\{\kappa,\frac{sp}{n+sp}\right\}$.
Using a lemma on fast geometric convergence of sequences (see for instance \cite[Chapter I, Lemma 4.2]{Di93}),
we conclude from \eqref{2ndYj+1} and \eqref{2ndZj+1} that $Y_n\to0$ as $n\to\infty$. This proves the desired estimate \eqref{DeGiorgi3} and
we have thus proved the lemma.
\end{proof}
We are now in a position to establish a decay estimate of $u$ for the second alternative and the following proposition is our main result in this subsection.
\begin{proposition}\label{2nd proposition}
Let $u$ be a bounded weak solution to \eqref{LKut}-\eqref{kernel}
 in the sense of Definition \ref{weak solution}.
Assume that \eqref{2nd} holds for any $t_0-(A-1)\theta\rho^{sp}\leq \bar t\leq t_0$.
Then, there exists a constant $\sigma_*<1$ depending only upon the data, such that if
\begin{equation}\begin{split}\label{Alt2Tail}
\frac{\rho^\frac{n\kappa}{p-1}}{
\theta^\frac{1}{m}}\widetilde{\mathrm{Tail}}_m((u-\mu_+)_+;Q_0)\leq (\sigma_*\xi_1)^{1-\frac{p-2}{m}}\omega,\end{split}\end{equation}
then
\begin{equation}\begin{split}\label{osc2}
\essosc_{\tilde Q_0} u\leq (1-\tfrac{1}{8}\sigma_*\xi_1)\omega,
 \end{split}\end{equation}
 where $\tilde Q_0=Q_{\frac{1}{4}\rho}^{(\theta)}(z_0)$.
\end{proposition}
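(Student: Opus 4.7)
The plan is to chain together the three preparatory lemmas of this subsection (Lemma \ref{measure}, Lemma \ref{measureshrinkinglemma}, and Lemma \ref{lemmaDeGiorgi3}) and then choose the parameter $\sigma_*$ so that the smallness hypothesis of the final De Giorgi-type lemma is satisfied.

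First, I would check that the tail hypothesis \eqref{Alt2Tail} propagates to the hypotheses of each of the three lemmas. Recalling that $\tilde\theta=\xi_1^{2-p}\theta$, one computes
\begin{equation*}
\bigl(\sigma_*^{2-p}\tilde\theta\bigr)^{-\frac{1}{m}}=(\sigma_*\xi_1)^{\frac{p-2}{m}}\theta^{-\frac{1}{m}},
\end{equation*}
so \eqref{Alt2Tail} is exactly the requirement \eqref{measureTail1} needed by Lemma \ref{measureshrinkinglemma} and Lemma \ref{lemmaDeGiorgi3}. Since $\sigma_*<1$ and $1-(p-2)/m\in(0,1)$ because $m>p-1$, a simple quantitative choice $\sigma_*\le\xi_1^{(p-2)/(m-p+2)}$ also yields the weaker tail bound \eqref{measureTail} needed in Lemma \ref{measure}. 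In parallel, the assumption \eqref{2nd} produces, by an elementary pigeonhole on time slices, a time level $t_*\in[\bar t-\theta\rho^{sp},\,\bar t-\tfrac{1}{2}\nu_0\theta\rho^{sp}]$ at which \eqref{t*} holds, so Lemma \ref{measure} (together with Remark \ref{largetime}) applies on the whole interval $[t_0-(A-1)\theta\rho^{sp},t_0]$.

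Next, with these measure-theoretical and tail inputs in place, I would invoke Lemma \ref{measureshrinkinglemma} to obtain
\begin{equation*}
\bigl|\{u\ge \mu_+-\tfrac{1}{4}\sigma_*\xi_1\omega\}\cap Q_A\bigr|\le \gamma\,\sigma_*^{p-1}\nu_0^{-1}|Q_A|,
\end{equation*}
and then fix $\sigma_*$ small enough, depending only on the data, so that
\begin{equation*}
\gamma\,\sigma_*^{p-1}\nu_0^{-1}\le \nu_2,
\end{equation*}
where $\nu_2$ is the universal constant furnished by Lemma \ref{lemmaDeGiorgi3}. This makes the smallness condition \eqref{2nd Degiorgi assumption} valid, and Lemma \ref{lemmaDeGiorgi3} then gives
\begin{equation*}
u(x,t)\le \mu_+-\tfrac{1}{8}\sigma_*\xi_1\omega\quad\text{a.e. in }Q_A'=B_{\frac{1}{2}\rho}(x_0)\times\bigl(t_0-\sigma_*^{-(p-2)}\tilde\theta(\tfrac{1}{2}\rho)^{sp},t_0\bigr).
\end{equation*}

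Finally, I would verify the inclusion $\tilde Q_0=Q_{\frac{1}{4}\rho}^{(\theta)}(z_0)\subset Q_A'$. The spatial inclusion is immediate, and the temporal inclusion reduces to checking that $\theta(\tfrac{1}{4}\rho)^{sp}\le \sigma_*^{-(p-2)}\tilde\theta(\tfrac{1}{2}\rho)^{sp}$, equivalently $(\sigma_*\xi_1)^{p-2}\le 2^{sp}$, which is trivial since $\sigma_*\xi_1<1$ and $p>2$. Combining the pointwise upper bound on $\tilde Q_0$ with $\mu_-\le u$ gives
\begin{equation*}
\essosc_{\tilde Q_0}u\le \mu_+-\tfrac{1}{8}\sigma_*\xi_1\omega-\mu_-\le \bigl(1-\tfrac{1}{8}\sigma_*\xi_1\bigr)\omega,
\end{equation*}
which is exactly \eqref{osc2}. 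The main obstacle I expect is bookkeeping: one must verify simultaneously that the single tail bound \eqref{Alt2Tail} implies all the heterogeneous tail hypotheses of the three lemmas (each normalised by a different intrinsic time scale, $\theta$, $\hat\theta=\sigma_*\tilde\theta$, or $\sigma_*^{-(p-2)}\tilde\theta$), and one must choose $\sigma_*$ small enough to satisfy the three independent constraints above (enabling Lemma \ref{measure}, closing the De Giorgi iteration via $\gamma\sigma_*^{p-1}\nu_0^{-1}\le\nu_2$, and giving the inclusion $\tilde Q_0\subset Q_A'$), all while keeping $\sigma_*$ dependent only on the data.
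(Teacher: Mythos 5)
Your proposal is correct and follows essentially the same route as the paper: deduce the tail hypotheses of Lemmas \ref{measure}, \ref{measureshrinkinglemma} and \ref{lemmaDeGiorgi3} from \eqref{Alt2Tail} (noting \eqref{Alt2Tail} is precisely \eqref{measureTail1} since $\sigma_*^{2-p}\tilde\theta=(\sigma_*\xi_1)^{2-p}\theta$), choose $\sigma_*$ with $\gamma\sigma_*^{p-1}\nu_0^{-1}\le\nu_2$ so that the measure-shrinking estimate triggers the De Giorgi lemma, and conclude via the inclusion $\tilde Q_0\subset Q_A'$. In fact you are slightly more careful than the paper on one point: the paper asserts \eqref{Alt2Tail}$\Rightarrow$\eqref{measureTail} merely from $\sigma_*,\xi_1<1$, whereas (as you note) one should additionally take $\sigma_*\le\xi_1^{(p-2)/(m-p+2)}$, a harmless data-dependent shrinking of $\sigma_*$.
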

\begin{proof} First, we note that \eqref{Alt2Tail} implies \eqref{measureTail}, since $\sigma_*<1$ and $\xi_1<1$. So, Lemma \ref{measure}
and Lemma \ref{measureshrinkinglemma} are at our disposal. Let $\nu_2<1$ be the constant defined in \eqref{nu2}. We now choose
$\sigma_*=(\gamma^{-1}\nu_0\nu_2)^\frac{1}{p-1}$, where $\gamma$ is the constant claimed by Lemma \ref{measureshrinkinglemma}. According to
Lemma \ref{measureshrinkinglemma}, we obtain \eqref{2nd Degiorgi assumption}. Moreover, we infer from Lemma \ref{lemmaDeGiorgi3} that
\eqref{DeGiorgi3} holds for such a choice of $\sigma_*$. This proves the desired estimate \eqref{osc2}, since $\tilde Q_0\subset Q_A^\prime$.
\end{proof}
From the proof of Proposition \ref{2nd proposition}, we remark that the choice of $\sigma_*=(\gamma^{-1}\nu_0\nu_2)^\frac{1}{p-1}$ also
determines the value of $A$ by $A=1+\sigma_*^{-(p-2)}\xi_1^{2-p}$. This also fixes the value of $\xi_0$ via \eqref{xi0}.
\subsection{The proof of Theorem \ref{main1}}\label{nearzeroproof}
This subsection is devoted to the proof of Theorem \ref{main1} under an assumption that "$u$ is near zero".
The precise definition of such assumption will be described in \eqref{nearzeroallj}.
Our proof is divided into five steps.

Step 1:
First, we observe that \eqref{osc1} or \eqref{osc2} still holds if $\mu_+-\mu_-\leq\frac{1}{2}\omega$.
Let $\nu_*=\min\{\xi_0,(\sigma_*\xi_1)^{1-\frac{p-2}{m}}\}$ and $\eta=\min\{\frac{1}{2}\xi_0,\frac{1}{8}\sigma_*\xi_1\}$.
Propositions \ref{1st proposition} and \ref{2nd proposition} may be summarized by saying that
if
 \begin{equation}\begin{split}\label{AltTailall}
\frac{\rho^\frac{n\kappa}{p-1}}{\theta^\frac{1}{m}}\widetilde{\mathrm{Tail}}_m((u-\mu_\pm)_\pm;Q_0)\leq\nu_*\omega,\end{split}\end{equation}
then
\begin{equation}\begin{split}\label{oscall}
\essosc_{\tilde Q_0} u\leq (1-\eta)\omega,
 \end{split}\end{equation}
 where $\tilde Q_0=Q_{\frac{1}{4}\rho}^{(\theta)}(z_0)$.

Step 2: \emph{We claim that there exists a constant $\hat\sigma=\hat\sigma(\text{data},\nu_*,A)<1$, such that if $\rho=\hat \sigma \rho_0$,
then \eqref{AltTailall} holds}. To prove the claim, we only deal with the case of the truncated function $(u-\mu_-)_-$ as the other case is similar.
First, we decompose
\begin{equation*}\begin{split}
&\frac{\rho^\frac{n\kappa}{p-1}}{\theta^\frac{1}{m}}\widetilde{\mathrm{Tail}}_m((u-\mu_-)_-;Q_0)
\\&\leq \gamma\frac{\rho^\frac{n\kappa}{p-1}}{\theta^\frac{1}{m}}\left[\int_{t_0-A\theta\rho_0^{sp}}^{t_0}\left(\int_{B_R
\setminus B_{\rho_0}(x_0)}
\frac{(u-\mu_-)_-(y,t)^{p-1}}{|y-x_0|^{n+sp}}
\,\mathrm {d}y\right)^\frac{m}{p-1}\,\mathrm {d}t\right]^\frac{1}{m}
\\&+\gamma\frac{\rho^\frac{n\kappa}{p-1}}{\theta^\frac{1}{m}}\left[\int_{t_0-A\theta\rho_0^{sp}}^{t_0}\left(\int_{\mathbb{R}^n\setminus B_R}
\frac{(u-\mu_-)_-(y,t)^{p-1}}{|y-x_0|^{n+sp}}
\,\mathrm {d}y\right)^\frac{m}{p-1}\,\mathrm {d}t\right]^\frac{1}{m}
\\&=:T_1+T_2.
\end{split}\end{equation*}
To estimate $T_1$, we note that $(u-\mu_-)_-\leq |u|+|\mu_-|\leq 2\sup_{Q_R}|u|$ in $Q_R$. It follows from \eqref{omegaxi0} that
\begin{equation*}\begin{split}
T_1&\leq \gamma\frac{\rho^\frac{n\kappa}{p-1}}{\theta^\frac{1}{m}}\sup_{Q_R}|u|\cdot\left[\int_{t_0-A\theta\rho_0^{sp}}^{t_0}\left(\int_{\mathbb{R}^n
\setminus B_{\rho_0}(x_0)}
\frac{1}{|y-x_0|^{n+sp}}
\,\mathrm {d}y\right)^\frac{m}{p-1}\,\mathrm {d}t\right]^\frac{1}{m}
\\&\leq \gamma \omega\frac{\rho^\frac{n\kappa}{p-1}}{\theta^\frac{1}{m}}(A\theta\rho_0^{sp})^\frac{1}{m}\rho_0^{-\frac{sp}{p-1}}
=\gamma A^\frac{1}{m}\left(\frac{\rho}{\rho_0}\right)^\frac{n\kappa}{p-1}\omega.
\end{split}\end{equation*}
Next, we consider the estimate for $T_2$. To this end, we observe that $|y-x_0|\geq |y|-|x_0|\geq\frac{3}{4}|y|$ holds for all
$|y|\geq R$, since $|x_0|\leq \frac{1}{4}R\leq \frac{1}{4}|y|$. In view of $(u-\mu_-)_-\leq |u|+|\mu_-|\leq |u|+\omega$
and $(t_0-A\theta\rho_0^{sp},t_0)\subset (-R^{sp},0)$
, we infer from
\eqref{omegaxi0} that
\begin{equation*}\begin{split}\text{Tail}_m(|u|;Q_R)<\omega^\frac{m-(p-q-1)}{m}=4^\frac{q+1-p}{m}\theta^\frac{1}{m}\omega
\end{split}\end{equation*}
and hence
\begin{equation*}\begin{split}
T_2&\leq \gamma\frac{\rho^\frac{n\kappa}{p-1}}{\theta^\frac{1}{m}}\left[\int_{-R^{sp}}^{0}\left(\int_{\mathbb{R}^n\setminus B_R}
\frac{|u(y,t)|^{p-1}}{|y|^{n+sp}}
\,\mathrm {d}y\right)^\frac{m}{p-1}\,\mathrm {d}t\right]^\frac{1}{m}
\\&+\gamma\omega\frac{\rho^\frac{n\kappa}{p-1}}{\theta^\frac{1}{m}}\left[\int_{t_0-A\theta\rho_0^{sp}}^{t_0}\left(\int_
{\mathbb{R}^n\setminus B_R}
\frac{1}{|y|^{n+sp}}
\,\mathrm {d}y\right)^\frac{m}{p-1}\,\mathrm {d}t\right]^\frac{1}{m}
\\&\leq \gamma\frac{\rho^\frac{n\kappa}{p-1}}{\theta^\frac{1}{m}}R^{\frac{sp}{m}-\frac{sp}{p-1}}\text{Tail}_m(|u|;Q_R)
+\gamma\omega\frac{\rho^\frac{n\kappa}{p-1}}{\theta^\frac{1}{m}}(A\theta\rho_0^{sp})^\frac{1}{m}R^{-\frac{sp}{p-1}}
\\&\leq \gamma (1+A^\frac{1}{m})\left(\frac{\rho}{\rho_0}\right)^\frac{n\kappa}{p-1}\omega,
\end{split}\end{equation*}
since $\rho_0<R$. Therefore, we conclude that there exists a constant $\gamma=\gamma(\text{data})$, such that
\begin{equation*}\begin{split}
\frac{\rho^\frac{n\kappa}{p-1}}{\theta^\frac{1}{m}}\widetilde{\mathrm{Tail}}_m((u-\mu_-)_-;Q_0)
\leq\gamma (1+A^\frac{1}{m})\hat\sigma^\frac{n\kappa}{p-1}\omega.
\end{split}\end{equation*}
At this point, we choose $\hat\sigma>0$ so small that $\gamma (1+A^\frac{1}{m})\hat\sigma^\frac{n\kappa}{p-1}<\nu_*$.
This proves the inequality \eqref{AltTailall}, which is our claim.

Step 3: Let us denote by $\tilde\rho_1$ the radius $\rho$ in Step 1.
Here and subsequently, $\omega_0$ and $\theta_0$ stand for $\omega$ and $\theta$ in Step 1, respectively. Moreover, we set
$\omega_1=(1-\eta)\omega_0$ and $\theta_1=\left(\frac{1}{4}\omega_1\right)^{q+1-p}$. Our task now is to construct a smaller
radius $\rho_1<\tilde\rho_1$ and a scaled parabolic
cylinder $Q_1=Q_{\rho_1}^{(A\theta_1)}(z_0)$ such that $Q_1\subseteq \tilde Q_0$. To this end, we define
$\lambda=\frac{1}{4}(1-\eta)^\frac{p-(q+1)}{sp}A^{-\frac{1}{sp}}$,
$\hat\delta=\lambda\hat
\sigma$
and
$\rho_1=\lambda\tilde\rho_1=\lambda\hat
\sigma\rho_0=\hat\delta \rho_0$.
At this point, we set $Q_1=Q_{\rho_1}^{(A\theta_1)}(z_0)$, $\mu_1^-=\essinf_{Q_1}u$ and $\mu_1^+=\esssup_{Q_1}u$.
In view of $\rho_1=\lambda\tilde\rho_1$, we find that $Q_1\subseteq \tilde Q_0$; hence
\begin{equation*}\begin{split}\essosc_{Q_1}u=\mu_1^+-\mu_1^-\leq\essosc_{\tilde Q_0}u\leq \omega_1.\end{split}\end{equation*}
Let $\sigma<1$ be a constant that will be determined later.
Next, we define $\delta=\lambda\sigma$, where $\lambda=\frac{1}{4}(1-\eta)^\frac{p-(q+1)}{sp}A^{-\frac{1}{sp}}$.
For $j\geq2$, we set $\tilde \rho_j=\sigma\rho_{j-1}$
and $\rho_j=\lambda\tilde\rho_{j}=\delta \rho_{j-1}$. Set $\omega_j=(1-\eta)\omega_{j-1}=(1-\eta)^j\omega_0$, $\theta_j=\left(\frac{1}{4}\omega_j\right)^{q+1-p}$,
$\tilde Q_{j-1}=Q_{\frac{1}{4}\tilde\rho_j}^{(\theta_{j-1})}(z_0)$, $Q_j=Q_{\rho_j}^{(A\theta_j)}(z_0)$, $\mu_j^-=\essinf_{Q_j}u$ and $\mu_j^+
=\esssup_{Q_j}u$. Since $\rho_j=\lambda\tilde\rho_j$ and $\tilde \rho_j<\rho_{j-1}$, we find that $Q_j\subseteq \tilde Q_{j-1}\subseteq Q_{j-1}$
holds for any $j\geq1$. In the rest of this section, we assume that
\begin{equation}\begin{split}\label{nearzeroallj}\mu_j^-\leq\xi_0\omega_j\qquad \text{and}\qquad \mu_j^+\geq-\xi_0\omega_j\quad \text{hold}\ \text{for}\ \text{all}\quad
j\geq1,
\end{split}\end{equation}
where $\xi_0$ is the constant defined in \eqref{xi0}. Recalling that $\omega_0=\omega$ is defined in \eqref{omegaxi0}, we find that
\eqref{nearzeroallj} holds for $j=0$.

Step 4: \emph{Let $j_0\geq1$ be a fixed integer. Assume that for $j=1,\cdots,j_0$, there holds
\begin{equation}\begin{split}\label{jAltTailall}
\frac{\tilde\rho_j^\frac{n\kappa}{p-1}}{\theta_{j-1}
^\frac{1}{m}}\widetilde{\mathrm{Tail}}_m((u-\mu_{j-1}^\pm)_\pm;Q_{j-1})\leq\nu_*\omega_{j-1}.\end{split}\end{equation}
Moreover, repeating the arguments as in Step 1, we conclude that
\begin{equation}\begin{split}\label{assumptionforosc}
\essosc_{\tilde Q_{j-1}}u\leq\omega_j\end{split}\end{equation}
holds for any $1\leq j\leq j_0$. Then, we claim that there exists a constant $\sigma<1$ depending only upon the data, such that if $\tilde \rho_j=\sigma\rho_{j-1}$ holds for any $j\geq2$,
then
\begin{equation}\begin{split}\label{j0AltTailall}
\frac{\tilde\rho_{j_0+1}^\frac{n\kappa}{p-1}}{\theta_{j_0}
^\frac{1}{m}}\widetilde{\mathrm{Tail}}_m((u-\mu_{j_0}^\pm)_\pm;Q_{j_0})\leq\nu_*\omega_{j_0}.\end{split}\end{equation}}
To prove the claim, we only treat the case of the truncated function $(u-\mu_{j_0}^-)_-$ as the other case is similar.
We first consider the case when $j_0\geq2$.
Since $p>2$, we have $\frac{1}{p-1}<1$ and $m>p-1>1$.
To simplify the presentation, we write $U(y,t)=(u-\mu_{j_0}^-)_-(y,t)^{p-1}|y-x_0|^{-n-sp}$ and obtain
\begin{equation*}\begin{split}
&\frac{\tilde\rho_{j_0+1}^\frac{n\kappa}{p-1}}{\theta_{j_0}
^\frac{1}{m}}\widetilde{\mathrm{Tail}}_m((u-\mu_{j_0}^-)_-;Q_{j_0})
\\&\leq \frac{\tilde\rho_{j_0+1}^\frac{n\kappa}{p-1}}{\theta_{j_0}
^\frac{1}{m}}\left[\int_{t_0-A\theta_{j_0}\rho_{j_0}^{sp}}^{t_0}\left(\sum_{l=2}^{j_0}\left(\int_{B_{\frac{1}{4}\tilde\rho_{l-1}
}(x_0)\setminus B_{\frac{1}{4}\tilde\rho_l}
(x_0)}U(y,t)\,\mathrm{d}y
\right)^\frac{1}{p-1}
\right.\right.
\\&+
\left(\int_{B_{\frac{1}{4}\tilde\rho_{j_0}
}(x_0)\setminus B_{\rho_{j_0}}
(x_0)}U(y,t)\,\mathrm{d}y
\right)^\frac{1}{p-1}
+\left(\int_{B_{\rho_0}(x_0)\setminus B_{\frac{1}{4}\tilde\rho_1}
(x_0)}U(y,t)\,\mathrm{d}y
\right)^\frac{1}{p-1}
\\&\left.\left.+\left(\int_{\mathbb{R}^n\setminus B_{\rho_0}(x_0)}U(y,t)\,\mathrm{d}y
\right)^\frac{1}{p-1}\right)^m\,\mathrm{d}t\right]^\frac{1}{m},
\end{split}\end{equation*}
since $\frac{1}{p-1}<1$.
By Minkowski's inequality, we get
\begin{equation*}\begin{split}
&\frac{\tilde\rho_{j_0+1}^\frac{n\kappa}{p-1}}{\theta_{j_0}
^\frac{1}{m}}\widetilde{\mathrm{Tail}}_m((u-\mu_{j_0}^-)_-;Q_{j_0})
\\&\leq \frac{\tilde\rho_{j_0+1}^\frac{n\kappa}{p-1}}{\theta_{j_0}
^\frac{1}{m}}\sum_{l=2}^{j_0}\left(\int_{t_0-A\theta_{j_0}\rho_{j_0}^{sp}}^{t_0}
\left(\int_{B_{\frac{1}{4}\tilde\rho_{l-1}}(x_0)\setminus B_{\frac{1}{4}\tilde\rho_l}
(x_0)}U(y,t)\,\mathrm{d}y
\right)^\frac{m}{p-1}\,\mathrm{d}t\right)^\frac{1}{m}
\\&+\frac{\tilde\rho_{j_0+1}^\frac{n\kappa}{p-1}}{\theta_{j_0}
^\frac{1}{m}}\left(\int_{t_0-A\theta_{j_0}\rho_{j_0}^{sp}}^{t_0}
\left(\int_{B_{\rho_0}(x_0)\setminus B_{\frac{1}{4}\tilde\rho_1}
(x_0)}U(y,t)\,\mathrm{d}y
\right)^\frac{m}{p-1}\,\mathrm{d}t\right)^\frac{1}{m}
\\&+\frac{\tilde\rho_{j_0+1}^\frac{n\kappa}{p-1}}{\theta_{j_0}
^\frac{1}{m}}\left(\int_{t_0-A\theta_{j_0}\rho_{j_0}^{sp}}^{t_0}
\left(\int_{B_{\frac{1}{4}\tilde\rho_{j_0}
}(x_0)\setminus B_{\rho_{j_0}}
(x_0)}U(y,t)\,\mathrm{d}y
\right)^\frac{m}{p-1}\,\mathrm{d}t\right)^\frac{1}{m}
\\&+ \frac{\tilde\rho_{j_0+1}^\frac{n\kappa}{p-1}}{\theta_{j_0}
^\frac{1}{m}}\left(\int_{t_0-A\theta_{j_0}\rho_{j_0}^{sp}}^{t_0}
\left(\int_{\mathbb{R}^n\setminus B_{\rho_0}(x_0)}U(y,t)\,\mathrm{d}y
\right)^\frac{m}{p-1}\,\mathrm{d}t\right)^\frac{1}{m}
\\&=:\sum_{l=2}^{j_0}T_l+T+T^\prime+T^{\prime\prime}.
\end{split}\end{equation*}
For $l=2,\cdots, j_0$, we observe that $B_{\frac{1}{4}\tilde\rho_{l-1}}(x_0)\times (t_0-A\theta_{j_0}\rho_{j_0}^{sp},t_0)\subseteq \tilde
Q_{l-2}$. It follows from
\eqref{assumptionforosc} that
$(u-\mu_{j_0}^-)_-\leq \essosc_{\tilde Q_{l-2}}u\leq \omega_{l-1}$ on the set $\tilde Q_{l-2}$. Noting that $\tilde\rho_{j_0+1}=\sigma\rho_{j_0}$,
$\rho_{j_0}=\delta^{j_0-l}\rho_l=(\lambda\sigma)^{j_0-l}\rho_l$, $\tilde\rho_l>\rho_l$ and $\omega_{j_0}=(1-\eta)^{j_0-l}\omega_l$,
we obtain
\begin{equation*}\begin{split}
T_l&\leq \gamma \frac{\tilde\rho_{j_0+1}^\frac{n\kappa}{p-1}}{\theta_{j_0}
^\frac{1}{m}}\omega_{l-1}\tilde\rho_l^{-\frac{sp}{p-1}}(A\theta_{j_0}\rho_{j_0}^{sp})^\frac{1}{m}
\leq\gamma(1-\eta)^{-1}A^\frac{1}{m}\sigma^\frac{n\kappa}{p-1}\left(\frac{\rho_{j_0}}{\rho_l}\right)^\frac{sp}{p-1}
\left(\frac{\omega_l}{\omega_{j_0}}\right)\omega_{j_0}
\\&\leq \gamma (1-\eta)^{-1}
A^\frac{1}{m}\sigma^\frac{n\kappa}{p-1}\left[(\lambda\sigma)^\frac{sp}{p-1}(1-\eta)^{-1}\right]^{j_0-l}\omega_{j_0}.
\end{split}\end{equation*}
At this point, we set
\begin{equation}\begin{split}\label{sigmadef}
\sigma=\left[\tfrac{1}{100}\gamma^{-1}A^{-\frac{1}{m}}\nu_*(1-\eta)\right]^\frac{p-1}{n\kappa}.
\end{split}\end{equation}
With this choice of $\sigma$, we find that $(\lambda\sigma)^\frac{sp}{p-1}<\frac{1}{2}(1-\eta)$; hence
\begin{equation*}\begin{split}
\sum_{l=2}^{j_0}T_l&\leq \gamma (1-\eta)^{-1}A^\frac{1}{m}\sigma^\frac{n\kappa}{p-1}\omega_{j_0}\sum_{l=0}^\infty2^{-l}\leq\tfrac{1}{5}\nu_*\omega_{j_0}.
\end{split}\end{equation*}
To estimate $T$, we observe that
$B_{\rho_0}(x_0)\times (t_0-A\theta_{j_0}\rho_{j_0}^{sp},t_0)\subseteq Q_R$; hence $(u-\mu_{j_0}^-)_-\leq 2\omega_0$. It follows that
\begin{equation*}\begin{split}
T&\leq \gamma \frac{\tilde\rho_{j_0+1}^\frac{n\kappa}{p-1}}{\theta_{j_0}
^\frac{1}{m}}\omega_0\tilde\rho_1^{-\frac{sp}{p-1}}(A\theta_{j_0}\rho_{j_0}^{sp})^\frac{1}{m}\leq \gamma A^\frac{1}{m}\sigma^\frac{n\kappa}{p-1}
(1-\eta)^{-j_0}\left(\frac{\rho_{j_0}}{\rho_1}\right)^\frac{sp}{p-1}\omega_{j_0}.
\end{split}\end{equation*}
Noting that $j_0\geq 2>1$, $\rho_{j_0}=(\lambda\sigma)^{j_0-1}\rho_1$ and $(\lambda\sigma)^\frac{sp}{p-1}<\frac{1}{2}(1-\eta)$, we get
\begin{equation*}\begin{split}
T&\leq \gamma A^\frac{1}{m}\sigma^\frac{n\kappa}{p-1}(\lambda\sigma)^{-\frac{sp}{p-1}}
\left(\frac{(\lambda\sigma)^\frac{sp}{p-1}}{1-\eta}\right)^{j_0}\omega_{j_0}
\leq \gamma A^\frac{1}{m}\sigma^\frac{n\kappa}{p-1}(1-\eta)^{-1}\omega_{j_0}
\leq \tfrac{1}{5}\nu_*\omega_{j_0}.
\end{split}\end{equation*}
Next, we consider the estimate for $T^\prime$. In view of $B_{\frac{1}{4}\tilde\rho_{j_0}
}(x_0)\times (t_0-A\theta_{j_0}\rho_{j_0}^{sp},t_0)\subseteq \tilde Q_{j_0-1}$, we conclude from \eqref{assumptionforosc} that
$(u-\mu_{j_0}^-)_-\leq \essosc_{\tilde Q_{j_0-1}}u\leq \omega_{j_0}$ on the set $\tilde Q_{j_0-1}$; hence
\begin{equation*}\begin{split}
T^\prime&\leq \gamma \frac{\tilde\rho_{j_0+1}^\frac{n\kappa}{p-1}}{\theta_{j_0}
^\frac{1}{m}}\omega_{j_0}\rho_{j_0}^{-\frac{sp}{p-1}}(A\theta_{j_0}\rho_{j_0}
^{sp})^\frac{1}{m}= \gamma A^\frac{1}{m}\sigma^\frac{n\kappa}{p-1}
\omega_{j_0}\leq \tfrac{1}{5}\nu_*\omega_{j_0}.
\end{split}\end{equation*}
Finally, we come to the estimate of $T^{\prime\prime}$.
To this end, we observe that $(u-\mu_{j_0}^-)_-(y,t)\leq (u-\mu_{0}^-)_-(y,t)+\mu_{j_0}^--\mu_0^-\leq (u-\mu_{0}^-)_-(y,t)+\omega_0$.
According to Step 2, we get
\begin{equation*}\begin{split}
T^{\prime\prime}&\leq
\sigma^{j_0\frac{n\kappa}{p-1}}\frac{\tilde\rho_1^\frac{n\kappa}{p-1}}{\theta_0
^\frac{1}{m}}\widetilde{\mathrm{Tail}}_m((u-\mu_0^-)_-;Q_0)
+\gamma \frac{\tilde\rho_{j_0+1}^\frac{n\kappa}{p-1}}{\theta_{j_0}
^\frac{1}{m}}\omega_0\rho_0^{-\frac{sp}{p-1}}(A\theta_{j_0}\rho_{j_0}^{sp})^\frac{1}{m}
\\&\leq \sigma^{j_0\frac{n\kappa}{p-1}} \nu_*\omega_0+\gamma A^\frac{1}{m}\sigma^\frac{n\kappa}{p-1}
(1-\eta)^{-j_0}\left(\frac{\rho_{j_0}}{\rho_1}\right)^\frac{sp}{p-1}\omega_{j_0}
,\end{split}\end{equation*}
since $\theta_{j_0}\geq \theta_0$ and $\rho_0>\rho_1$. Similar to the estimate of $T$, we infer from \eqref{sigmadef} that
\begin{equation*}\begin{split}
T^{\prime\prime}&\leq  \gamma A^\frac{1}{m}\sigma^\frac{n\kappa}{p-1}(1-\eta)^{-1}\omega_{j_0}+\tfrac{1}{5}\nu_*\omega_{j_0}
<\tfrac{2}{5}\nu_*\omega_{j_0}.
\end{split}\end{equation*}
This proves the desired inequality \eqref{j0AltTailall} for $j_0\geq2$. Finally, we consider the case $j_0=1$. This is an easy case and we have
\begin{equation*}\begin{split}
&\frac{\tilde\rho_{2}^\frac{n\kappa}{p-1}}{\theta_{1}
^\frac{1}{m}}\widetilde{\mathrm{Tail}}_m((u-\mu_{1}^-)_-;Q_{1})
\leq T+T^\prime+T^{\prime\prime}<\nu_*\omega_{j_0}.
\end{split}\end{equation*}
This completes the proof of the claim.

Step 5:
\emph{Proof of the inequality \eqref{theorem1oscillation}.}
According to the assumption \eqref{nearzeroallj}, we conclude from \eqref{j0AltTailall} and Step 1 that
\begin{equation*}\begin{split}
\essosc_{Q_{j_0+1}}u\leq
\essosc_{\tilde Q_{j_0}}u\leq\omega_{j_0+1}.\end{split}\end{equation*}
This also implies that \eqref{assumptionforosc} holds for $j=j_{0}+1$.
We can repeat the arguments from Step 3, and there holds
\begin{equation}\begin{split}\label{assumptionforoscforallj}
\essosc_{Q_{\rho_j}^{(\theta_0)}(z_0)} u\leq
\essosc_{Q_j}u\leq\omega_j=(1-\eta)^j\omega_0\end{split}\end{equation}
for all $j\geq0$. Here, $\rho_1=\hat\delta\rho_0$ and $\rho_{j+1}=\delta\rho_j$ for $j\geq1$. For any fixed $r<\rho_1$,
there exists an integer $\bar j\geq0$, such that $\delta^{\bar j+1}\rho_1\leq r\leq \delta^{\bar j}\rho_1$. Moreover, we apply
\eqref{assumptionforoscforallj} to obtain
\begin{equation*}\begin{split}
\essosc_{Q_r^{(\theta_0)}(z_0)} u&\leq
\essosc_{Q_{\bar j+1}}u\leq\omega_{\bar j+1}
\\&\leq \gamma\left(\frac{r}{\rho_1}\right)^{\alpha_0}\omega_0\leq \gamma_0\left(\frac{r}{R}\right)^{\alpha_0},\end{split}\end{equation*}
where $\alpha_0=\frac{\ln(1-\eta)}{\ln\delta}$
and the constant $\gamma_0$ depends on the data, $R$, $\|u\|_\infty$ and $\mathrm{Tail}_m(|u|;Q_{R})$.
This proves the inequality \eqref{theorem1oscillation} under the assumption \eqref{nearzeroallj}.
\section{Reduction of oscillation away from zero}
In this section we assume that there exists an integer $j\geq1$, such that \eqref{nearzeroallj} does not hold.
Suppose that $i_0\geq1$ is the first index satisfying $\mu_{i_0}^->\xi_0\omega_{i_0}$ or $\mu_{i_0}^+<-\xi_0\omega_{i_0}$.
We only consider the case of $\mu_{i_0}^->\xi_0\omega_{i_0}$. This is because $-u$ is also the weak solution to \eqref{LKut}-\eqref{kernel},
and hence the case $\mu_{i_0}^+<-\xi_0\omega_{i_0}$ can be addressed analogously.

Since $i_0$ is the first index such that \eqref{nearzeroallj} does not hold, we find that
$\mu_{i_0-1}^+\leq\mu_{i_0-1}^-+\omega_{i_0-1}\leq (1+\xi_0)\omega_{i_0-1}$. Recalling that $Q_{i_0}\subset Q_{i_0-1}$ and $\omega_{i_0}=(1-\eta)\omega_{i_0-1}$, we see that
$\mu_{i_0}^-\leq \mu_{i_0-1}^+\leq(1+\xi_0)\omega_{i_0-1}\leq\frac{1+\xi_0}{1-\eta}\omega_{i_0}$. This implies that \begin{equation}\label{ximuequivalent}\xi_0\omega_{i_0}<\mu_{i_0}^-\leq\mu_{i_0}^+\leq \frac{1+\xi_0}{1-\eta}\omega_{i_0}.\end{equation}
On the other hand, we  conclude from Step 2 and Step 4 of subsection \ref{nearzeroproof} that
\begin{equation}\begin{split}\label{i0AltTailall}
\frac{\tilde\rho_{i_0}^\frac{n\kappa}{p-1}}{\theta_{i_0-1}
^\frac{1}{m}}\widetilde{\mathrm{Tail}}_m((u-\mu_{i_0-1}^\pm)_\pm;Q_{i_0-1})\leq\nu_*\omega_{i_0-1}.\end{split}\end{equation}
In the following we omit for simplification the index $i_0$ in our notation until subsection \ref{finalsubsection}.
We introduce a
new function $\tilde v=u/\mu_-$. In view of \eqref{ximuequivalent}, we infer that
\begin{equation}\label{tildev}1\leq\tilde v\leq \frac{\mu^+}{\mu^-}\leq \frac{\mu^-+\omega}{\mu^-}\leq\frac{1+\xi_0}{\xi_0}
\qquad\text{a.e.}\quad\text{in}\quad Q.\end{equation}
Here, we abbreviate $Q=Q_{i_0}$. Moreover, it follows from \eqref{tildev} that $\tilde v>0$ in $Q$, and there holds
\begin{equation}\begin{split}\label{infsuptildev}
\mu_{\tilde v}^-:=
\essinf_Q\tilde v=\frac{1}{\mu^-}\essinf_Qu=1
\quad\text{and}\quad
\mu_{\tilde v}^+:=
\esssup_Q\tilde v=\frac{1}{\mu^-}\esssup_Qu\leq\frac{1+\xi_0}{\xi_0}.
\end{split}\end{equation}
To proceed further, we set $\omega_{\tilde v}=\mu_{\tilde v}^+-\mu_{\tilde v}^-$. We infer from \eqref{ximuequivalent} and \eqref{infsuptildev}
that $\omega_{\tilde v}=(\mu^-)^{-1}\omega$ and
$\frac{1-\eta}{1+\xi_0}\leq\omega_{\tilde v}<\xi_0^{-1}$.
From the definition of $\tilde v$, we have
\begin{equation*}\begin{split}
\tilde v\in C_{\loc}(0,T;L_{\loc}^{q+1}(\Omega))\cap L_{\loc}^p(0,T;W_{\loc}^{s,p}(\Omega))
\cap L_{\loc}^m(0,T;L_{sp}^{p-1}(\mathbb{R}^n)).
\end{split}
\end{equation*}
Let
$U\Subset B_\rho(x_0)$
be an open set. Next, we define $\hat v=\tilde v^q$ on the set $Q$.
For every subinterval $(t_1,t_2)\subset(t_0-A\theta\rho^{sp},t_0)$, we infer from \eqref{weaksolution} that
the identity
\begin{equation}\begin{split}\label{weaksolutionfortildev}
\int_U &\hat v(\cdot,t)\varphi(\cdot,t)\,\mathrm{d}x\bigg|_{t=t_1}^{t_2}+\iint_{U\times(t_1,t_2)}-\hat v\partial_t
 \varphi
\,\mathrm {d}x\mathrm {d}t
\\&+(\mu^-)^{p-q-1}\int_{t_1}^{t_2}\iint_{B_\rho(x_0)\times B_\rho(x_0)}K(x,y,t)|\hat v(x,t)^\frac{1}{q}-\hat v(y,t)^\frac{1}{q}|^{p-2}(
\hat v(x,t)^\frac{1}{q}-\hat v(y,t)^\frac{1}{q})
\\&\qquad\qquad\times(\varphi(x,t)-\varphi(y,t))
\,\mathrm {d}x\mathrm {d}y\mathrm {d}t
\\&+2(\mu^-)^{p-q-1}\int_{t_1}^{t_2}\int_{\mathbb{R}^n\setminus B_\rho(x_0)}\int_{B_\rho(x_0)}K(x,y,t)|\hat v(x,t)^\frac{1}{q}-\tilde
 v(y,t)|^{p-2}
\\&\qquad\qquad\times (
\hat v(x,t)^\frac{1}{q}-\tilde v(y,t))\varphi(x,t)
\,\mathrm {d}x\mathrm {d}y\mathrm {d}t=0
\end{split}\end{equation}
holds for
any testing function
\begin{equation*}\varphi\in W_{\loc}^{1,q+1}(t_0-A\theta\rho^{sp},t_0;L^{q+1}(U))\cap L_{\loc}^p(t_0-A\theta\rho^{sp},t_0;W_0^{s,p}(U)).
\end{equation*}
At this point, we set $T_0=(\mu^-)^{p-q-1}A\theta\rho^{sp}$ and $\hat Q=B_\rho(x_0)\times(-T_0,0)$. Moreover, we introduce the new functions
\begin{equation*}
v(x,t)=\hat v(x,t_0+(\mu^-)^{q-p+1}t)\qquad\text{on}\ \text{the}\ \text{set}\quad \hat Q
\end{equation*}
and $\bar v(x,t)=\tilde v(x,t_0+(\mu^-)^{q-p+1}t)$.
For any subinterval $(t_1,t_2)\subseteq(-T_0,0)$ and open set $U\Subset B_\rho(x_0)$, we conclude from \eqref{weaksolutionfortildev} that
\begin{equation}\begin{split}\label{weaksolutionforv}
\int_U & v(\cdot,t)\varphi(\cdot,t)\,\mathrm{d}x\bigg|_{t=t_1}^{t_2}+\iint_{U\times(t_1,t_2)}-v\partial_t
 \varphi
\,\mathrm {d}x\mathrm {d}t
\\&+\int_{t_1}^{t_2}\iint_{B_\rho(x_0)\times B_\rho(x_0)}\tilde K(x,y,t)|v(x,t)^\frac{1}{q}-v(y,t)^\frac{1}{q}|^{p-2}(
v(x,t)^\frac{1}{q}-v(y,t)^\frac{1}{q})
\\&\qquad\qquad\times(\varphi(x,t)-\varphi(y,t))
\,\mathrm {d}x\mathrm {d}y\mathrm {d}t
\\&+2\int_{t_1}^{t_2}\int_{\mathbb{R}^n\setminus B_\rho(x_0)}\int_{B_\rho(x_0)}\tilde K(x,y,t)| v(x,t)^\frac{1}{q}-\bar
 v(y,t)|^{p-2}
\\&\qquad\qquad\times (
v(x,t)^\frac{1}{q}-\bar v(y,t))\varphi(x,t)
\,\mathrm {d}x\mathrm {d}y\mathrm {d}t=0
\end{split}\end{equation}
holds for
any testing function
$\varphi\in W_{\loc}^{1,q+1}(-T_0,0;L^{q+1}(U))\cap L_{\loc}^p(-T_0,0;W_0^{s,p}(U))$.
Here, $\tilde K(x,y,t)=K(x,y,t_0+(\mu^-)^{q-p+1}t)$ and satisfies \eqref{kernel}. Moreover, we infer from \eqref{tildev} that
\begin{equation}\label{barvv}
1\leq\bar v\leq \frac{1+\xi_0}{\xi_0}
\qquad\text{a.e.}\quad\text{in}\quad \hat Q.\end{equation}
According to \eqref{infsuptildev}, we find that
\begin{equation}\begin{split}\label{infsupbarv}
\mu_{\bar v}^-:=
\essinf_{\hat Q}\bar v=1
\qquad\text{and}\qquad
\mu_{\bar v}^+:=
\esssup_{\hat Q} \bar v\leq\frac{1+\xi_0}{\xi_0}.
\end{split}\end{equation}
Furthermore, we denote $\omega_{\bar v}=\mu_{\bar v}^+-\mu_{\bar v}^-$. It follows that
$\omega_{\bar v}=\omega_{\tilde v}=(\mu^-)^{-1}\omega$; hence
\begin{equation}\begin{split}\label{barvomega}\frac{1-\eta}{1+\xi_0}\leq\omega_{\bar v}<\frac{1}{\xi_0}.\end{split}\end{equation}
At this stage, we define
\begin{equation*}\begin{split}
\mu_{ v}^-:=
\essinf_{\hat Q} v=(\mu_{\bar v}^-)^q=1,\quad
\mu_{ v}^+:=
\esssup_{\hat Q} v=(\mu_{\bar v}^+)^q
\end{split}\end{equation*}
\begin{equation*}\begin{split}
\quad\text{and}\quad
\omega_v=\essosc_{\hat Q}v=\mu_{ v}^+-\mu_{ v}^-.
\end{split}\end{equation*}
By the mean value theorem, we conclude from \eqref{infsupbarv} that
\begin{equation}\label{gamma0}
\omega_v=(\mu_{\bar v}^+)^q-(\mu_{\bar v}^-)^q\leq \gamma_0(\mu_{\bar v}^+-\mu_{\bar v}^-)= \gamma_0 \omega_{\bar v},\end{equation}
where the constant $\gamma_0$
depends only upon $q$ and $\xi_0$. Similarly, we find that
there exists a constant $\gamma_1=\gamma_1(q,\xi_0)$, such that
\begin{equation}\begin{split}\label{gamma1}\omega_{\bar v}&=\mu_{\bar v}^+-\mu_{\bar v}^-=[(\mu_{\bar v}^+)^q]^{1/q}-[(\mu_{\bar v}^-)^q]^{1/q}
\\&\leq\gamma_1 [(\mu_{\bar v}^+)^q-(\mu_{\bar v}^-)^q]=\gamma_1 \omega_v.\end{split}\end{equation}
Consequently, we infer from \eqref{barvomega} that
there exist positive constants $C_1$ and $C_2$ depending only upon the data, such that
\begin{equation}\begin{split}\label{omegavc1c2}
C_1\leq\omega_v\leq C_2.
\end{split}\end{equation}
At this point, we fix a constant $\tilde \omega\in (\omega_v,2\omega_v)$.
The inequality \eqref{omegavc1c2} means that $\omega_v$ or $\tilde\omega$ can be considered as a constant.
Finally, we claim that $v\in C(-T_0,0;L^2(B_\rho(x_0)))$. To prove this claim, we observe that
$\bar v\in C(-T_0,0;L^{q+1}(B_\rho(x_0)))$. Recalling that $v=\bar v^q$ in
 $\hat Q$, we infer from \eqref{barvv} that for any
$s,t\in(-T_0,0)$, there holds
\begin{equation*}\begin{split}
\|v(\cdot,t)-v(\cdot,s)\|_{L^2(B_\rho(x_0))}^2&\leq\gamma\|v\|_{L^\infty(\hat Q)}^{2-\hat q}\|\bar v(\cdot,t)^q-\bar v(\cdot,s)^q\|_{L^{\hat q}(B_\rho(x_0))}^{\hat q}
\\&\leq\gamma \|\bar v(\cdot,t)-\bar v(\cdot,s)\|_{L^{\hat q}(B_\rho(x_0))}^{\hat q}\to 0\qquad\text{as}\quad s\to t,
\end{split}\end{equation*}
where $\hat q=\min\{2,q+1\}$. This proves the claim.
\subsection{Caccioppoli inequality}
The aim of this section is to establish a Caccioppoli-type inequality for the function $v$.
Let $z_1=(x_0,t_1)$ be a fixed point. Next, let $r<\rho$ and $s>0$ be positive numbers
 such that $Q_{r,s}(z_1)\subseteq\hat Q$. We denote by
$\varphi$ a piecewise smooth function in $Q_{r,s}(z_1)$ such that
\begin{equation}\label{def zeta1}0\leq\varphi\leq1, \quad|D\varphi|<\infty\quad\text{and}\quad\varphi=0\quad\text{
on}\quad B_r(x_0)\setminus B_{(1-\sigma)r}(x_0),\end{equation}
where $\sigma\in(0,1)$. We now study the Caccioppoli inequality for $v$. Before stating this result, we remark that the time mollification defined in \eqref{timemollifierexp} cannot be used in the proof of the Caccioppoli inequality in this case, since the function $v$ is defined locally on the set $\hat Q$. Instead, we have to use a family of
convolutions $v*\xi_\epsilon$, where the smooth functions $\xi_\epsilon$ have compact supports in $(-\frac{1}{2},\frac{1}{2})$.

More precisely, the functions $\xi_\epsilon$ are defined as follows.
Let $\xi(t)$ be a nonnegative, even smooth function with compact support in $(-\frac{1}{2},\frac{1}{2})$.
Moreover, we assume that $\int_{-\frac{1}{2}}^\frac{1}{2}\xi(t)\,\mathrm{d}t=1$.
For any fixed $\epsilon\in(0,1)$ and an integrable function $f\in L^1(\mathbb{R}^{n+1})$, we define
$\xi_\epsilon=\epsilon^{-1}\xi(t/\epsilon)$ and
\begin{equation*}\begin{split}f^\epsilon=f*\xi_\epsilon=\int_{\mathbb{R}}\xi_\epsilon(\tau)
f(x,t-\tau)\,\mathrm{d}\tau.\end{split}\end{equation*}
Let us denote by $w$ the extension function of $v$, i.e., $w=v$ in $\hat Q$ and $w=0$ in $\mathbb{R}^{n+1}\setminus \hat Q$.
The next lemma is our main result in this subsection.
\begin{lemma}\label{caclemmatype2}
Let $k\geq0$ be a fixed positive number.
 There exists a positive constant
$\gamma$ depending only upon the data, such that for every piecewise smooth cutoff function
$\varphi$ and satisfying \eqref{def zeta1}, there holds
\begin{equation}\begin{split}\label{Cacinequality2}
&\esssup_{t_1-s<t<t_1}\int_{B_r(x_0)\times\{t\}}(v-k)_\pm^2\varphi^p \,\mathrm {d}x
\\&+\iint_{Q_{r,s}(z_1)}(v-k)_\pm(x,t)\varphi(x,t)^p\left(\int_{B_r(x_0)}\frac{(v-k)_\mp(y,t)^{p-1}
}{|x-y|^{n+sp}}\,\mathrm{d}y\right)\,\mathrm {d}x\mathrm {d}t\\
&+\int_{t_1-s}^{t_1}\iint_{B_r(x_0)\times B_r(x_0)}\min\left\{\varphi(x,t),\varphi(y,t)\right\}^p
\\&\qquad\qquad\qquad\qquad\times\frac{|(v-k)_\pm(x,t)-(v-k)_\pm(y,t)|^p}{|x-y|^{n+sp}}\,\mathrm {d}x\mathrm {d}y\mathrm {d}t
\\
&\leq \int_{B_r(x_0)\times\{t_1-s\}}(v-k)_\pm^2\varphi^p \,\mathrm {d}x+\gamma
\iint_{Q_{r,s}(z_1)} (v-k)_\pm^2|\partial_t\varphi^p|\,\mathrm {d}x\mathrm {d}t
\\&+\gamma\int_{t_1-s}^{t_1}\iint_{B_r(x_0)\times B_r(x_0)}\max\left\{(v-k)_\pm(x,t),(v-k)_\pm(y,t)\right\}^p
\\&\qquad\qquad\qquad\qquad \times\frac{|\varphi(x,t)-\varphi(y,t)|^p}{|x-y|^{n+sp}}
\,\mathrm {d}x\mathrm {d}y\mathrm {d}t
\\&
+\gamma \sigma^{-n-sp}\int_{t_1-s}^{t_1}\int_{\mathbb{R}^n\setminus B_r(x_0)}\int_{B_r(x_0)}
\frac{(\bar v-k^\frac{1}{q})_\pm(y,t)^{p-1}}{|y-x_0|^{n+sp}}
(v-k)_\pm(x,t)\varphi(x,t)^p\,\mathrm {d}x\mathrm {d}y\mathrm {d}t.
\end{split}\end{equation}
Here, we assume that $-T_0<t_1-s<t_1\leq0$.
\end{lemma}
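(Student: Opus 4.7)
The proof will mirror the argument for Lemma \ref{caclemma}, which in turn adapts \cite[Proposition 3.1]{BDL} and \cite[Lemma 2.5]{BK}, but with three essential modifications dictated by the transformation $v = \bar v^q$ on the local cylinder $\hat Q$. First, because $v$ is only defined locally, the exponential mollification $[u]_{\bar h}$ used previously is not at our disposal; I will replace it by the Friedrichs mollification $v^\epsilon = w * \xi_\epsilon$ of the zero-extension $w$, and plug $\varphi_h = (v^\epsilon - k)_\pm \varphi^p \zeta_{\epsilon'}$ into the weak formulation \eqref{weaksolutionforv}, where $\zeta_{\epsilon'}$ is the Lipschitz temporal cut-off from the proof of Lemma \ref{caclemma}. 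After letting $\epsilon\downarrow 0$ and then $\epsilon'\downarrow 0$, the standard commutator argument eliminates $(v - v^\epsilon)$ contributions and produces, in the limit, the desired identity in which the nonlocal parts are tested against $(v-k)_\pm \varphi^p$.

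The parabolic term is the main structural change compared to Lemma \ref{caclemma}: since the time derivative in \eqref{weaksolutionforv} acts on $v$ (not on $|v|^{q-1}v$), the Steklov identity
\begin{equation*}
v^\epsilon \partial_t (v^\epsilon - k)_\pm = \pm\tfrac{1}{2}\partial_t (v^\epsilon - k)_\pm^2 \pm k \partial_t (v^\epsilon - k)_\pm
\end{equation*}
leads after integration by parts to the quadratic boundary term $\tfrac{1}{2}(v-k)_\pm^2 \varphi^p$ at the temporal endpoints, and to the volume term $(v-k)_\pm^2 |\partial_t \varphi^p|$; the $k$-linear pieces cancel against $-v^\epsilon(v^\epsilon-k)_\pm$ after rearrangement, exactly as in the $p$-homogeneous case. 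This yields the first term on the left-hand side of \eqref{Cacinequality2} and the first term on the right.

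For the nonlocal contributions one has to address the fact that the operator acts on $v^{1/q} = \bar v$ rather than on $v$. Both the interior double integral $\iint_{B_r \times B_r}$ and the tail integral $\iint_{B_r \times (\mathbb{R}^n \setminus B_r)}$ involve the nonlinearity $|\bar v(x) - \bar v(y)|^{p-2}(\bar v(x) - \bar v(y))$, while the truncation $(v-k)_\pm$ entering the test function is expressed in terms of $v = \bar v^q$. The uniform bounds \eqref{barvv}, namely $1 \leq \bar v \leq (1+\xi_0)/\xi_0$, ensure that the map $s \mapsto s^q$ is bi-Lipschitz on the relevant range, so $(v-k)_\pm$ is pointwise comparable to $(\bar v - k^{1/q})_\pm$ and the standard algebraic inequalities of \cite[Lemma 2.5]{BK}, \cite[Lemma 2.8]{BGK1} apply modulo constants depending only on the data. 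This produces the Gagliardo term and the cross-term $(v-k)_\pm(x)\int_{B_r}(v-k)_\mp(y)^{p-1}|x-y|^{-n-sp}\,\mathrm{d}y$ on the left, together with the $\max\{(v-k)_\pm(x),(v-k)_\pm(y)\}^p|\varphi(x)-\varphi(y)|^p$ absorbable error on the right. For the tail, the same splitting as in Lemma \ref{caclemma} --- writing the integrand as the sum of its principal monotone part and a controllable error --- delivers the last term on the right of \eqref{Cacinequality2}, where the appearance of $(\bar v - k^{1/q})_\pm^{p-1}$ rather than $(v-k)_\pm^{p-1}$ is precisely due to $\bar v$ being the natural variable of the nonlocal operator outside $B_r$.

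The main obstacle I anticipate is the reconciliation of the truncation in the $v$-variable with the nonlocal operator's natural variable $\bar v = v^{1/q}$: in particular, verifying the coercivity bound
\begin{equation*}
|\bar v(x) - \bar v(y)|^{p-2}(\bar v(x) - \bar v(y))\bigl(\varphi^p(x)(v-k)_+(x) - \varphi^p(y)(v-k)_+(y)\bigr) \gtrsim \min\{\varphi(x),\varphi(y)\}^p |(v-k)_+(x) - (v-k)_+(y)|^p
\end{equation*}
modulo an error absorbable into the $|\varphi(x) - \varphi(y)|^p$ term. A case analysis on whether $\bar v(x), \bar v(y)$ lie above or below the level $k^{1/q}$, combined with the bi-Lipschitz equivalence noted above, handles this; one simply needs to track carefully the constants produced by the mean value theorem applied to $s \mapsto s^q$ on $[1, (1+\xi_0)/\xi_0]$.
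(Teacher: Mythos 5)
Your proposal is correct and follows essentially the same route as the paper: replace the exponential time-mollification by the Friedrichs mollification of the zero extension of $v$, derive the energy identity, and then exploit the bounds $1\leq\bar v\leq(1+\xi_0)/\xi_0$ on $\hat Q$ so that $s\mapsto s^q$ is bi-Lipschitz on the relevant range, keeping the tail in the variable $\bar v$, which is exactly why $(\bar v-k^{1/q})_\pm$ appears there. The only cosmetic differences are that the paper implements the commutator step by mollifying the test function $\tilde\varphi=(v^\epsilon-k)_+\varphi^p$ and transferring the mollifier onto $v$ via Fubini (verifying admissibility through \eqref{fractionalvk}--\eqref{timevk} and showing the error terms $\Sigma(\epsilon)$, $T_1$, $T_2$ vanish using the tail condition), and it delegates the final algebraic coercivity estimates to the proof of \cite[Lemma 7.6]{KA}, which your case analysis reproduces.
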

\begin{proof}
We only treat the case of the truncated function $(v-k)_+$ as the other case is similar.
For a fixed $t_2\in (t_1-s,t_1)$, we set $\epsilon_0=\frac{1}{1000}\min\{-t_2,t_1-s+T_0\}$.
For $\epsilon<\epsilon_0$, we find that for $t_1-s<t<t_2$ and $x\in B_r(x_0)$, there holds \begin{equation*}w^\epsilon(x,t)=\int_{\mathbb{R}}\xi_\epsilon(t-\tau)w(x,\tau)\,\mathrm{d}\tau=\int_{\mathbb{R}}\xi_\epsilon(t-\tau)v(x,\tau)
\,\mathrm{d}\tau.\end{equation*}
This is because $\supp\xi_\epsilon\subset (-\frac{\epsilon}{2},\frac{\epsilon}{2})$ and the domain of the integration is $t-\frac{\epsilon}{2}<\tau<t+\frac{\epsilon}{2}$. This implies that $-T_0<\tau<0$; hence $w(x,\tau)=v(x,\tau)$ on
$\supp\xi_\epsilon (t-\cdot)$. For simplicity of notation, we write $v^\epsilon$ instead of $w^\epsilon$.

To proceed further, we set $\tau_1=t_1-s-\epsilon_0$ and $\tau_2=t_2+\epsilon_0$.
Next, we take a testing function $\tilde\varphi$ satisfying
\begin{equation}\label{conditionforphi}\tilde\varphi \in L^p(\tau_1,\tau_2;W_0^{s,p}(B_r(x_0)))\cap W^{1,\infty}(\tau_1,\tau_2;L^\infty(B_r(x_0))).\end{equation}
Note that for $t_1-s<t<t_2$ and $\epsilon<\epsilon_0$, there holds $t-\frac{\epsilon}{2}>t_1-s-\frac{\epsilon}{2}>\tau_1$ and
$t+\frac{\epsilon}{2}<t_2+\epsilon_0=\tau_2$. By a similar argument, we introduce a mollifier $\tilde \varphi^\epsilon$ by
$\tilde \varphi^\epsilon=\int_{\mathbb{R}}\xi_\epsilon(t-\tau)\tilde \varphi(x,\tau)
\,\mathrm{d}\tau$.
In the weak formulation \eqref{weaksolutionforv}, take the test function $\tilde \varphi^\epsilon$. This leads to
\begin{equation}\begin{split}\label{weaksolutionforvproof}
\int_{B_r(x_0)} & v(x,t_2)\tilde \varphi^\epsilon(x,t_2)\,\mathrm{d}x+\int_{t_1-s}^{t_2}\int_{B_r(x_0)}-v\partial_t
\tilde \varphi^\epsilon
\,\mathrm {d}x\mathrm {d}t
\\& -\int_{B_r(x_0)} v(x,t_1-s)\tilde \varphi^\epsilon(x,t_1-s)\,\mathrm{d}x
\\&+\int_{t_1-s}^{t_2}\iint_{B_r(x_0)\times B_r(x_0)}\tilde K(x,y,t)|v(x,t)^\frac{1}{q}-v(y,t)^\frac{1}{q}|^{p-2}(
v(x,t)^\frac{1}{q}-v(y,t)^\frac{1}{q})
\\&\qquad\qquad\times(\tilde \varphi^\epsilon(x,t)-\tilde \varphi^\epsilon(y,t))
\,\mathrm {d}x\mathrm {d}y\mathrm {d}t
\\&+2\int_{t_1-s}^{t_2}\int_{\mathbb{R}^n\setminus B_r(x_0)}\int_{B_r(x_0)}\tilde K(x,y,t)| v(x,t)^\frac{1}{q}-\bar
 v(y,t)|^{p-2}
\\&\qquad\qquad\times (
v(x,t)^\frac{1}{q}-\bar v(y,t))\tilde \varphi^\epsilon(x,t)
\,\mathrm {d}x\mathrm {d}y\mathrm {d}t=0.
\end{split}\end{equation}
By Fubini's theorem, we obtain
\begin{equation*}\begin{split}
&\int_{t_1-s}^{t_2}\int_{B_r(x_0)}-v\partial_t
\tilde \varphi^\epsilon
\,\mathrm {d}x\mathrm {d}t
\\&=\int_{t_1-s+\frac{\epsilon}{2}}^{t_2-\frac{\epsilon}{2}}\int_{B_r(x_0)}\tilde \varphi\partial_tv^\epsilon
\,\mathrm {d}x\mathrm {d}t-\int_{B_r(x_0)}v^\epsilon(x,t_2-\tfrac{\epsilon}{2})
\tilde \varphi(x,t_2-\tfrac{\epsilon}{2})
\,\mathrm {d}x
\\&+\int_{B_r(x_0)}v^\epsilon(x,t_1-s+\tfrac{\epsilon}{2})
\tilde \varphi(x,t_1-s+\tfrac{\epsilon}{2})
\,\mathrm {d}x+\Sigma(\epsilon),
\end{split}\end{equation*}
where
\begin{equation*}\begin{split}
\Sigma(\epsilon)=&-\int_{B_r(x_0)}\int_{t_1-s-\frac{\epsilon}{2}}^{t_1-s+\frac{\epsilon}{2}}\left(
\frac{1}{\epsilon}\int_{t_1-s}^{\tau+\frac{\epsilon}{2}}v(x,t)\xi\left(\frac{\tau-t}{\epsilon}\right)\,\mathrm{d}t
\right)\partial_\tau\tilde\varphi(x,\tau)\,
\mathrm{d}\tau\mathrm{d}x
\\&-\int_{B_r(x_0)}\int_{t_2-\frac{\epsilon}{2}}^{t_2+\frac{\epsilon}{2}}
\left(\frac{1}{\epsilon}\int_{\tau-\frac{\epsilon}{2}}^{t_2}v(x,t)\xi\left(\frac{\tau-t}{\epsilon}\right)\,\mathrm{d}t\right)
\partial_\tau\tilde\varphi(x,\tau)\,
\mathrm{d}\tau\mathrm{d}x.
\end{split}\end{equation*}
At this point, we take $\tilde\varphi=v_k^\epsilon\varphi^p$, where $v_k^\epsilon=(v^\epsilon-k)_+$. We assert that this choice of $\tilde\varphi$
satisfies \eqref{conditionforphi}. To prove this assertion, we only need to show that
\begin{equation}\begin{split}\label{fractionalvk}
\int_{\tau_1}^{\tau_2}\iint_{B_r(x_0)\times B_r(x_0)}\frac{|v_k^\epsilon(x,t)-v_k^\epsilon(y,t)|^p}{|x-y|^{n+sp}}\,
\mathrm{d}x\mathrm{d}y\mathrm{d}t\leq \gamma \|\bar v\|_{L^p(-T_0,0;W^{s,p}(B_r(x_0)))}^p
\end{split}\end{equation}
and
\begin{equation}\begin{split}\label{timevk}
\|\partial_tv_k^\epsilon(x,t)\|_{L^\infty(\tau_1,\tau_2;L^\infty(B_r(x_0)))}\leq \gamma(\epsilon,\text{data}).
\end{split}\end{equation}
We first consider the proof of \eqref{fractionalvk}. Noting that $|v_k^\epsilon(x,t)-v_k^\epsilon(y,t)|\leq |v^\epsilon(x,t)-v^\epsilon(y,t)|$,
we use \cite[Lemma 3.2 (iv)]{DZZ}, \eqref{barvv} and the mean value theorem to obtain
\begin{equation*}\begin{split}
&\int_{\tau_1}^{\tau_2}\iint_{B_r(x_0)\times B_r(x_0)}\frac{|v_k^\epsilon(x,t)-v_k^\epsilon(y,t)|^p}{|x-y|^{n+sp}}\,
\mathrm{d}x\mathrm{d}y\mathrm{d}t
\\&\leq \int_{-T_0}^{0}\iint_{B_r(x_0)\times B_r(x_0)}\frac{|\bar v(x,t)^q-\bar v(y,t)^q|^p}{|x-y|^{n+sp}}\,
\mathrm{d}x\mathrm{d}y\mathrm{d}t
\\&\leq\gamma \int_{-T_0}^{0}\iint_{B_r(x_0)\times B_r(x_0)}\frac{|\bar v(x,t)-\bar v(y,t)|^p}{|x-y|^{n+sp}}\,
\mathrm{d}x\mathrm{d}y\mathrm{d}t\leq \gamma \|\bar v\|_{L^p(-T_0,0;W^{s,p}(B_r(x_0)))}^p,
\end{split}\end{equation*}
which proves the inequality \eqref{fractionalvk}. To prove \eqref{timevk}, we observe that $\partial_tv_k^\epsilon=\int_{\mathbb{R}}(\partial_t\xi_\epsilon)(t-\tau)v(x,\tau)\,\mathrm{d}\tau$
and hence
\begin{equation*}\begin{split}
\|\partial_tv_k^\epsilon(x,t)\|_{L^\infty(\tau_1,\tau_2;L^\infty(B_r(x_0)))}\leq \|\partial_t\xi_\epsilon
\|_{L^1(\mathbb{R})}\|\bar v\|_{L^\infty(\hat Q)}^q
\leq \gamma(\epsilon,\text{data}),
\end{split}\end{equation*}
where we used \eqref{barvv} for the last inequality.
This proves the inequality \eqref{timevk}. Recalling that $v\in C(-T_0,0;L^2(B_\rho(x_0)))$, we have
$v^\epsilon\in C(\tau_1,\tau_2;L^2(B_r(x_0)))$; hence $\tilde\varphi\in C(\tau_1,\tau_2;L^2(B_r(x_0)))$.
According to the proof of \cite[Lemma 3.3]{BLS}, we find that $\Sigma(\epsilon)\to 0$ as $\epsilon\downarrow 0$.
Moreover, we infer that
\begin{equation*}\begin{split}
&-\int_{B_r(x_0)}v^\epsilon(x,t_2-\tfrac{\epsilon}{2})
\tilde \varphi(x,t_2-\tfrac{\epsilon}{2})
\,\mathrm {d}x
+\int_{B_r(x_0)}v^\epsilon(x,t_1-s+\tfrac{\epsilon}{2})
\tilde \varphi(x,t_1-s+\tfrac{\epsilon}{2})
\,\mathrm {d}x
\\&+\int_{B_r(x_0)}v^\epsilon(x,t_2)
\tilde \varphi(x,t_2)
\,\mathrm {d}x
-\int_{B_r(x_0)}v^\epsilon(x,t_1-s)
\tilde \varphi(x,t_1-s)
\,\mathrm {d}x\to0
\end{split}\end{equation*}
as $\epsilon\downarrow 0$. Next, we consider the fourth and fifth terms on the left hand side of \eqref{weaksolutionforvproof}.
First, we set $A=\tilde K(x,y,t)|\bar v(x,t)-\bar v(y,t)|^{p-2}(
\bar v(x,t)-\bar v(y,t))$ and introduce two quantities
\begin{equation*}\begin{split}
T_1=\int_{t_1-s}^{t_2}\iint_{B_r(x_0)\times B_r(x_0)} A\cdot([\tilde\varphi^\epsilon(x,t)-\tilde\varphi^\epsilon(y,t)]-[
\tilde\varphi(x,t)-\tilde\varphi(y,t)])\,\mathrm {d}x\mathrm {d}y\mathrm {d}t
\end{split}\end{equation*}
and
\begin{equation*}\begin{split}
T_2=\int_{t_1-s}^{t_2}\int_{\mathbb{R}^n\setminus B_r(x_0)}\int_{B_r(x_0)} A\cdot(\tilde\varphi^\epsilon(x,t)-
\tilde\varphi(x,t))\,\mathrm {d}x\mathrm {d}y\mathrm {d}t.
\end{split}\end{equation*}
Our task now is to show that $T_1$ and $T_2$ converge to zero after sending $\epsilon\downarrow 0$. In view of \eqref{fractionalvk}, we dedcue
\begin{equation*}\begin{split}
\int_{\tau_1}^{\tau_2}\iint_{B_r(x_0)\times B_r(x_0)}\frac{|\tilde\varphi^\epsilon(x,t)-\tilde\varphi^\epsilon(y,t)|^p}{|x-y|^{n+sp}}\,
\mathrm{d}x\mathrm{d}y\mathrm{d}t\leq \gamma \|\bar v\|_{L^p(-T_0,0;W^{s,p}(B_r(x_0)))}^p.
\end{split}\end{equation*}
We now proceed along the lines of the proof of \cite[Lemma 3.3]{DZZ} to conclude that $T_1\to0$ as $\epsilon\downarrow 0$.
We now turn our attention to the estimate of $T_2$. For $x\in B_{(1-\sigma)r}(x_0)$ and $y\in\mathbb{R}^n\setminus B_r(x_0)$,
we have $|y-x|\geq |y-x_0|-|x-x_0|\geq\sigma|y-x_0|$. It follows that
\begin{equation*}\begin{split}
T_2&\leq \gamma\sigma^{-n-sp}\int_{t_1-s}^{t_2}\int_{\mathbb{R}^n\setminus B_r(x_0)}\int_{B_r(x_0)} \frac{|\bar v(x,t)|^{p-1}+|\bar v(y,t)|^{p-1}}{|y-x_0|^{n+sp}}|\tilde\varphi^\epsilon(x,t)-
\tilde\varphi(x,t)|\,\mathrm {d}x\mathrm {d}y\mathrm {d}t
\\&\leq  \gamma\sigma^{-n-sp}r^{-sp}\int_{t_1-s}^{t_2}\int_{B_r(x_0)}|\bar v(x,t)|^{p-1}|\tilde\varphi^\epsilon(x,t)-
\tilde\varphi(x,t)|\,\mathrm {d}x\mathrm {d}t
\\&\quad+\gamma\sigma^{-n-sp}\int_{t_1-s}^{t_2}\left(\int_{\mathbb{R}^n\setminus B_r(x_0)} \frac{|\bar v(y,t)|^{p-1}}{|y-x_0|^{n+sp}}
\mathrm {d}y\right)
\left(
\int_{B_r(x_0)}|\tilde\varphi^\epsilon(x,t)-
\tilde\varphi(x,t)|\,\mathrm {d}x\right)\mathrm {d}t
\\&=:T_{2,1}+T_{2,2}.
\end{split}\end{equation*}
In view of \eqref{barvv}, we obtain
\begin{equation*}\begin{split}
T_{2,1}\leq \gamma\sigma^{-n-sp}r^{-sp}\|\bar v\|_{\hat Q}^{p-1}\int_{t_1-s}^{t_2}\int_{B_r(x_0)}|\tilde\varphi^\epsilon(x,t)-
\tilde\varphi(x,t)|\,\mathrm {d}x\mathrm {d}t\to 0,
\end{split}\end{equation*}
as $\epsilon\downarrow 0$. To estimate $T_{2,2}$,
we apply H\"older's inequality to obtain
\begin{equation*}\begin{split}
T_{2,2}&\leq \gamma\sigma^{-n-sp}\left[\int_{t_1-s}^{t_2}\left(\int_{\mathbb{R}^n\setminus B_r(x_0)} \frac{|\bar v(y,t)|^{p-1}}{|y-x_0|^{n+sp}}
\mathrm {d}y\right)^\frac{m}{p-1}
\mathrm {d}t\right]^\frac{p-1}{m}
\\&\times \left[\int_{t_1-s}^{t_2}\left(
\int_{B_r(x_0)}|\tilde\varphi^\epsilon(x,t)-
\tilde\varphi(x,t)|\,\mathrm {d}x\right)^\frac{m}{m-(p-1)}\mathrm {d}t\right]^\frac{m-(p-1)}{m}
\\&=:\gamma\sigma^{-n-sp}T_{2,2,1}\cdot T_{2,2,2}.
\end{split}\end{equation*}
To estimate $T_{2,2,1}$, we note that $\bar v(x,t)=(\mu^-)^{-1}u(x,t_0+(\mu^-)^{q-p+1}t)$ and this yields that
\begin{equation*}\begin{split}
T_{2,2,1}&=(\mu^-)^{(p-1)\frac{p-1-q-m}{m}}
\left[\int_{t_0+(\mu^-)^{q-p+1}(t_1-s)}^{t_0+(\mu^-)^{q-p+1}t_2}
\left(\int_{\mathbb{R}^n\setminus B_r(x_0)} \frac{|u(y,t)|^{p-1}}{|y-x_0|^{n+sp}}
\mathrm {d}y\right)^\frac{m}{p-1}
\mathrm {d}t\right]^\frac{p-1}{m}
\\&\leq (\mu^-)^{(p-1)\frac{p-1-q-m}{m}}\widetilde{\mathrm{Tail}}_m(|u|;Q)^{p-1}<+\infty.
\end{split}\end{equation*}
Finally, we consider the estimate of $T_{2,2,2}$. We use H\"older's inequality to obtain
\begin{equation*}\begin{split}
 T_{2,2,2}&\leq c(r) \left[
\int_{B_r(x_0)}\int_{t_1-s}^{t_2}|\tilde\varphi^\epsilon(x,t)-
\tilde\varphi(x,t)|^\frac{m}{m-(p-1)}\,\mathrm {d}t\mathrm {d}x\right]^\frac{m-(p-1)}{m}
\\&\leq c(r) (1+\|\xi\|_{L^1})\left[
\int_{B_r(x_0)}\int_{t_1-s}^{t_2}|v^\epsilon(x,t)-
v(x,t)|^\frac{m}{m-(p-1)}\,\mathrm {d}t\mathrm {d}x\right]^\frac{m-(p-1)}{m}.
\end{split}\end{equation*}
Since $v\in L^\infty(\hat Q)$, we have $T_{2,2,2}\to 0$ as $\epsilon\downarrow 0$. Consequently, we infer that $T_2\to0$
as $\epsilon\downarrow 0$.
At this point, we pass to the limit $\epsilon\downarrow 0$ in \eqref{weaksolutionforvproof} and
we are thus led to the following identity
\begin{equation*}\begin{split}
&\tfrac{1}{2}\int_{B_r(x_0)\times\{t_2\}}(v-k)_+^2\varphi^p \,\mathrm {d}x
 -\tfrac{1}{2}\int_{B_r(x_0)\times\{t_1-s\}}(v-k)_+^2\varphi^p \,\mathrm {d}x
\\& -\tfrac{1}{2}\int_{t_1-s}^{t_2}\int_{B_r(x_0)}(v-k)_+^2\partial_t
\varphi^p
\,\mathrm {d}x\mathrm {d}t
\\&+\int_{t_1-s}^{t_2}\iint_{B_r(x_0)\times B_r(x_0)}\tilde K(x,y,t)|v(x,t)^\frac{1}{q}-v(y,t)^\frac{1}{q}|^{p-2}(
v(x,t)^\frac{1}{q}-v(y,t)^\frac{1}{q})
\\&\qquad\qquad\times([(v-k)_+\varphi^p](x,t)-[(v-k)_+\varphi^p](y,t))
\,\mathrm {d}x\mathrm {d}y\mathrm {d}t
\\&+2\int_{t_1-s}^{t_2}\int_{\mathbb{R}^n\setminus B_r(x_0)}\int_{B_r(x_0)}\tilde K(x,y,t)| v(x,t)^\frac{1}{q}-\bar
 v(y,t)|^{p-2}
\\&\qquad\qquad\times (
v(x,t)^\frac{1}{q}-\bar v(y,t))[(v-k)_+\varphi^p](x,t)
\,\mathrm {d}x\mathrm {d}y\mathrm {d}t=0.
\end{split}\end{equation*}
At this point, the desired estimate \eqref{Cacinequality2} follows from
the proof of \cite[Lemma 7.6]{KA}
and we omit the details. The proof of the lemma is now
complete.
\end{proof}
\subsection{Auxiliary lemmas}
In this subsection, we provide some auxiliary lemmas and establish a decay estimate for the oscillation of the weak
solution.
First, we define $\Theta=\left(\frac{1}{4}\tilde \omega\right)^{2-p}$ and fix a constant $B>1$. Next, we set
\begin{equation}\label{tau0}\tau_0=\min\left\{1,\ \left(\tfrac{\xi_0^{p-q-1}A}{4^{q-1}B}\right)^\frac{1}{sp}
\left(\gamma_1^{-1}\tfrac{1-\eta}{1+\xi_0}\right)^\frac{p-2}{sp}
\right\},\end{equation}
where $\gamma_1$ is the constant in \eqref{gamma1}.
Let
$r<\tau_0\rho$ be a fixed radius.
According to \eqref{ximuequivalent}, \eqref{barvomega} and \eqref{gamma1}, we find that
$Q_r^{(B\Theta)}(x_0,0)\subset \hat Q$. The value of $B$ will be determined during the proof of Proposition \ref{oscvproposition}.
The choice of $B$ also fixes the value of $\tau_0$ via \eqref{tau0}.
We now establish a De Giorgi-type lemma for the function $v$.
\begin{lemma}\label{DeGiorgilemmaforv}
Let $r>0$,
$\delta_0\in(0,1)$,
$\xi\in(0,\frac{1}{4})$ and $\hat \Theta=(\xi\tilde\omega)^{2-p}$. Assume that $Q_r^{(\delta_0\hat\Theta)}(x_0,\bar t)\subset \hat Q$
 and
 \begin{equation}\begin{split}\label{firstTailv}
\frac{r^\frac{n\kappa}{p-1}}{(\delta_0\hat\Theta)^\frac{1}{m}}\widetilde{\mathrm{Tail}}_m((\bar v-\mu_{\bar v}^\pm)_\pm;\hat Q)\leq\xi\tilde\omega.\end{split}\end{equation}
 There exists a constant $\nu_0\in(0,1)$, depending only on the data and $\delta_0$, such that if
\begin{equation}\label{1st assumptionv}\large|\large\{(x,t)\in Q_r^{(\delta_0\hat\Theta)}(x_0,\bar t):\pm(\mu_v^\pm-v(x,t))\leq\xi\tilde\omega\large\}\large
|\leq \nu_0|Q_r^{(\delta_0\hat\Theta)}(x_0,\bar t)|,\end{equation}
then
\begin{equation}
\label{DeGiorgi1v}\pm(\mu_v^\pm-v(x,t))\geq\tfrac{1}{2}\xi\tilde\omega\qquad\text{for}\ \ \text{a.e.}\ \ (x,t)\in Q_{\frac{1}{2}r}
^{(\delta_0\hat\Theta)}(x_0,\bar t).\end{equation}
\end{lemma}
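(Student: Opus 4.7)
The plan is to run a De Giorgi iteration that mirrors Lemma~\ref{lemmaDeGiorgi1} but is powered by the $v$-Caccioppoli inequality \eqref{Cacinequality2} and calibrated to the intrinsic time scale $\hat\Theta=(\xi\tilde\omega)^{2-p}$ rather than $\theta$. By symmetry it suffices to treat the case involving $\mu_v^+$, i.e., to prove that $v<\mu_v^+-\tfrac{1}{2}\xi\tilde\omega$ almost everywhere on $Q_{r/2}^{(\delta_0\hat\Theta)}(x_0,\bar t)$.

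First I would introduce the standard geometric families
\begin{equation*}
k_j=\mu_v^+-\tfrac{1}{2}\xi\tilde\omega-2^{-j-1}\xi\tilde\omega,\qquad r_j=\tfrac{r}{2}+2^{-j-1}r,
\end{equation*}
together with auxiliary intermediate levels and radii $\tilde k_j,\tilde r_j,\hat r_j$ and cutoff functions $\varphi_j$ that equal $1$ on $Q_{r_{j+1}}^{(\delta_0\hat\Theta)}(x_0,\bar t)$, vanish outside $Q_{\hat r_j}^{(\delta_0\hat\Theta)}(x_0,\bar t)$, and satisfy $|D\varphi_j|\leq C\,2^j r^{-1}$ and $|\partial_t\varphi_j|\leq C\,2^j(\delta_0\hat\Theta)^{-1}r^{-sp}$. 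Applying Lemma~\ref{caclemmatype2} to $(v-k_j)_+$ and then changing variables through $w(x,t)=v(x_0+x,\bar t+\delta_0\hat\Theta\,t)$ yields a scaled energy estimate on the standard cylinders $Q_{r_j}$. Defining
\begin{equation*}
Y_j=\frac{|\{w\geq k_j\}\cap Q_{r_j}|}{|Q_{r_j}|},\qquad Z_j=\left(\dashint_{-r_j^{sp}}^{0}\left(\frac{|\{w(\cdot,t)\geq k_j\}\cap B_{r_j}|}{|B_{r_j}|}\right)^{\frac{m}{m-(p-1)}}\,\mathrm{d}t\right)^{\frac{m-(p-1)}{m(1+\kappa)}},
\end{equation*}
the two local right-hand side terms of \eqref{Cacinequality2}, namely the $(v-k_j)_+^2$ time-derivative contribution and the $(v-k_j)_+^p$ energy contribution, are each bounded by $C\,2^{jp}(\xi\tilde\omega)^p(\delta_0\hat\Theta)^{-1}r^{-sp}|Q_{r_j}|Y_j$, using $(v-k_j)_+\leq\xi\tilde\omega$ together with the identity $\hat\Theta=(\xi\tilde\omega)^{2-p}$.

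For the nonlocal boundary term the key algebraic step is the comparison
\begin{equation*}
(\bar v-k_j^{1/q})_+\leq(\bar v-\mu_{\bar v}^+)_++(\mu_{\bar v}^+-k_j^{1/q}),
\end{equation*}
and since $\bar v\geq 1$ by \eqref{barvv}, the mean value theorem applied to $x\mapsto x^{1/q}$ on the bounded interval $[k_j,\mu_v^+]$ (which lies in a fixed compact subset of $(0,\infty)$ depending only on the data) gives $\mu_{\bar v}^+-k_j^{1/q}\leq C(\xi_0,q)\,\xi\tilde\omega$ uniformly in $j$. Feeding this inequality together with the hypothesis \eqref{firstTailv} into the tail integral produces a bound of the form $C\,2^{j(n+sp)}(\xi\tilde\omega)^p(\delta_0\hat\Theta)^{-1}r^{-sp}|Q_{r_j}|(Y_j+Z_j^{1+\kappa})$. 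Combining these ingredients with the parabolic embedding Lemma~\ref{embedding} applied to $(w-\tilde k_j)_+$ exactly as in the proofs of Lemmas~\ref{lemmaDeGiorgi1} and~\ref{lemmaDeGiorgi3} (and using Lemma~\ref{b<a} for the truncation comparison of the Gagliardo seminorm), I arrive at the standard recursive system
\begin{equation*}
Y_{j+1}\leq C\,b^j\bigl(Y_j^{1+\zeta}+Y_j^\zeta Z_j^{1+\kappa}\bigr),\qquad Z_{j+1}\leq C\,b^j\bigl(Y_j+Z_j^{1+\kappa}\bigr),
\end{equation*}
with $\zeta=\min\{\kappa,sp/(n+sp)\}$ and constants $C,b$ depending on the data and on $\delta_0$. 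A sufficiently small choice $\nu_0=\nu_0(\mathrm{data},\delta_0)$ together with the fast geometric convergence lemma of \cite[Chapter~I,~Lemma~4.2]{Di93} then forces $Y_j\to 0$; undoing the rescaling in $t$ produces exactly \eqref{DeGiorgi1v}.

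The step I anticipate as the main obstacle is precisely the tail comparison above: the nonlocal tail in \eqref{Cacinequality2} is phrased through $(\bar v-k_j^{1/q})_+$, whereas the standing hypothesis \eqref{firstTailv} controls the tail of $(\bar v-\mu_{\bar v}^+)_+$. The uniform positivity $\bar v\geq 1$ provided by \eqref{barvv} is what keeps $x\mapsto x^{1/q}$ Lipschitz on the relevant compact interval, and thereby yields a $j$-independent comparison constant; without this the tail contribution could not be absorbed into the geometric iteration. A secondary (routine) point is that the factor $\delta_0$ enters the De Giorgi threshold $\nu_0$ only through the $(\delta_0\hat\Theta)^{-1}$ prefactor in the time term of the Caccioppoli inequality and propagates unchanged through the fast-convergence argument.
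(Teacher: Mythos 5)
Your proposal is correct and follows essentially the same route as the paper's proof: a De Giorgi iteration based on the Caccioppoli inequality \eqref{Cacinequality2} at the intrinsic scale $\delta_0\hat\Theta$, with the embedding Lemma \ref{embedding}, Lemma \ref{b<a} and fast geometric convergence. In particular, your key comparison $(\bar v-k_j^{1/q})_\pm\leq(\bar v-\mu_{\bar v}^\pm)_\pm+\gamma\xi\tilde\omega$, obtained from the uniform Lipschitz bound for $x\mapsto x^{1/q}$ on a compact interval away from zero (guaranteed by \eqref{barvv}), is exactly the paper's claim \eqref{claim2}, stated there for the "$-$" truncation instead of the "$+$" one.
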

\begin{proof}
We give the proof only for the function $-(\mu_v^--v(x,t))$; the other case is left to the
reader.
For $j=0,1,2,\cdots$, we define the sequences $k_j=\mu_v^-+2^{-1}\xi\tilde\omega+2^{-1-j}\xi\tilde\omega$, $\tilde k_j=\frac{1}{2}(k_j+k_{j+1})$,
$r_j=2^{-1}r+2^{-j-1}r$, $\tilde r_j=\frac{1}{2}(r_j+r_{j+1})$ and $\hat r_j=\frac{1}{2}(\tilde r_j+r_{j+1})$.
Moreover, we take a cutoff function
$0\leq\varphi_j\leq1$, such that $\varphi_j=1$ in $Q_{r_{j+1}}^{(\delta_0\hat\Theta)}(x_0,\bar t)$, $\varphi_j=0$ on $\partial_PQ_{\hat r_j}^{(\delta_0\hat\Theta)}(x_0,\bar t)$,
\begin{equation*}\begin{split}
|D\varphi_j|\leq 2^{j+3}r^{-1}\qquad\text{and}\qquad
|\partial_t\varphi_j|\leq c\delta_0^{-1}\hat\Theta^{-1}2^jr^{-sp}.
\end{split}\end{equation*}
With the choice of $\varphi=\varphi_j$ in Lemma \ref{caclemmatype2}, we first use the Caccioppoli inequality
\eqref{Cacinequality2}
to the truncated functions $(v-k_j)_-$ over the cylinder $Q_{\tilde r_j}^{(\hat\Theta)}(x_0,\bar t)$ and then rewrite
 the Caccioppoli inequality in terms of the
new functions $v_1(x,t)=v(x_0+x,\delta_0\hat\Theta t+\bar t)$ and $\bar v_1(x,t)=\bar v(x_0+x,\delta_0\hat\Theta t+\bar t)$. This leads to
\begin{equation*}\begin{split}
T_{0,1}+T_{0,2}&:=\hat\Theta^{-1}\esssup_{-r_{j+1}^{sp}<t<0}\int_{B_{r_{j+1}}\times\{t\}}(v_1-k_j)_-^2 \,\mathrm {d}x
\\
&+\int_{-r_{j+1}^{sp}}^{0}\iint_{B_{r_{j+1}}\times B_{r_{j+1}}}
\frac{|(v_1-k_j)_-(x,t)-(v_1-k_j)_-(y,t)|^p}{|x-y|^{n+sp}}\,\mathrm {d}x\mathrm {d}y\mathrm {d}t
\\
&\leq \gamma 2^j\hat\Theta^{-1}r^{-sp}
\iint_{Q_{\tilde r_j}} (v_1-k_j)_-^2 \,\mathrm {d}x\mathrm {d}t
+\gamma\frac{2^{jp}}{r^{sp}}\iint_{Q_{\tilde  r_j}}(v_1-k_j)_-^p
\,\mathrm {d}x\mathrm {d}t
\\&
+\gamma 2^{(n+sp)j}\int_{-\tilde r_j^{sp}}^0\int_{\mathbb{R}^n\setminus B_{\tilde r_j}}\int_{B_{\tilde r_j}}
\frac{(\bar v_1-k_j^\frac{1}{q})_-(y,t)^{p-1}}{|y|^{n+sp}}(v_1-k_j)_-(x,t)\,\mathrm {d}x\mathrm {d}y\mathrm {d}t
\\&=:T_1+T_2+T_3,
\end{split}\end{equation*}
since $k_j\geq0$.
Next, for  $j=0,1,2,\cdots$, we define
\begin{equation*}\begin{split}
Y_j=\frac{1}{|Q_{r_j}|}|\{(x,t)\in Q_{r_j}:v_1(x,t)\leq k_j\}|
\end{split}\end{equation*}
and
\begin{equation*}\begin{split}
Z_j=\left(\fint_{-r_j^{sp}}^0\left(\frac{1}{|B_j|}|A_-(v_1;0,r_j,k_j)|\right)^\frac{m}{m-(p-1)}\,\mathrm{d}t\right)^\frac{m-(p-1)}{m(1+\kappa)}.
\end{split}\end{equation*}
In view of $(v_1-k_j)_-\leq \xi\tilde\omega$ in $Q_r$, we obtain $T_1+T_2\leq\gamma 2^{jp}r^{-sp}(\xi\tilde\omega)^p|Q_{r_j}|Y_j$.
Next, we consider the estimate for $T_3$. To this end, we observe that
\begin{equation*}\begin{split}
T_3&\leq \gamma(\xi\tilde\omega) 2^{(n+sp)j}\int_{-\tilde r_j^{sp}}^0\int_{B_\rho\setminus B_{\tilde r_j}}\int_{B_{\tilde r_j}}
\frac{(\bar v_1-k_j^\frac{1}{q})_-(y,t)^{p-1}}{|y|^{n+sp}}\chi_{\{v_1\leq k_j\}}(x,t)\,\mathrm {d}x\mathrm {d}y\mathrm {d}t
\\&\quad+\gamma(\xi\tilde\omega) 2^{(n+sp)j}\int_{-\tilde r_j^{sp}}^0\int_{\mathbb{R}^n\setminus B_\rho}\int_{B_{\tilde r_j}}
\frac{(\bar v_1-k_j^\frac{1}{q})_-(y,t)^{p-1}}{|y|^{n+sp}}\chi_{\{v_1\leq k_j\}}(x,t)\,\mathrm {d}x\mathrm {d}y\mathrm {d}t
\\&=:T_{3,1}+T_{3,2}.
\end{split}\end{equation*}
To estimate $T_{3,1}$, we observe that $\bar v_1=v_1^\frac{1}{q}$ in $B_\rho\times(-r^{sp},0)$. Moreover, we infer from \eqref{infsupbarv}
and \eqref{barvomega} that $k_j\geq \mu_v^-=(\mu_{\bar v}^-)^q=1$ and $k_j\leq \mu_v^-+\xi\tilde\omega<(\mu_{\bar v}^-)^q +2\omega_v<1+2\gamma\omega_{\bar v}\leq 1+2\gamma \xi_0^{-1}$.
For $(y,t)\in B_\rho\times(-r^{sp},0)$, we deduce from \eqref{barvv} that $1\leq |k_j|+|v_1|\leq 1+2\gamma \xi_0^{-1}+\left(\frac{1+\xi_0}{\xi_0}\right)^q$; hence
\begin{equation*}\begin{split}
(\bar v_1-k_j^\frac{1}{q})_-(y,t)=(v_1(y,t)^\frac{1}{q}-k_j^\frac{1}{q})_-\leq \gamma (|k_j|+|v_1|)^{\frac{1}{q}-1}(v_1-k_j)_-\leq\gamma\xi\tilde\omega.
\end{split}\end{equation*}
It follows that $T_{3,1}\leq\gamma 2^{j(n+sp)}r^{-sp}(\xi\tilde\omega)^p|Q_{r_j}|Y_j$. We now turn our attention to the estimate of $T_{3,2}$.
To this end, we use H\"older's inequality to obtain
\begin{equation*}\begin{split}
T_{3,2}&\leq \gamma(\xi\tilde\omega) 2^{(n+sp)j}r^{n(1+\kappa)}Z_j^{1+\kappa}\left[\int_{-\tilde r_j^{sp}}^0\left(\int_{\mathbb{R}^n\setminus B_\rho}
\frac{(\bar v_1-k_j^\frac{1}{q})_-(y,t)^{p-1}}{|y|^{n+sp}}\,\mathrm {d}y\right)^\frac{m}{p-1}\mathrm {d}t\right]^\frac{p-1}{m}
\\&=:\gamma(\xi\tilde\omega) 2^{(n+sp)j}r^{n(1+\kappa)}Z_j^{1+\kappa}T_{3,2,1}.
\end{split}\end{equation*}
Transforming
back to the original function $\bar v$, we obtain
\begin{equation*}\begin{split}
T_{3,2,1}=\hat \Theta^{-\frac{p-1}{m}}\left[\int_{\bar t-\hat\Theta\tilde r_j^{sp}}^{\bar t}\left(\int_{\mathbb{R}^n\setminus B_\rho(x_0)}
\frac{(\bar v-k_j^\frac{1}{q})_-(y,t)^{p-1}}{|y-x_0|^{n+sp}}\,\mathrm {d}y\right)^\frac{m}{p-1}\mathrm {d}t\right]^\frac{p-1}{m}
\end{split}\end{equation*}
Next, we claim that
\begin{equation}\begin{split}\label{claim2}
(\bar v-k_j^\frac{1}{q})_-(y,t)\leq \gamma\xi\tilde\omega+(\mu_{\bar v}^--\bar v(y,t))_+.
\end{split}\end{equation}
To prove \eqref{claim2}, we infer from $\mu_v^-=(\mu_{\bar v}^-)^q$ that
\begin{equation*}\begin{split}
&(k_j^\frac{1}{q}-\bar v)_+(y,t)=(k_j^\frac{1}{q}-(\mu_v^-)^\frac{1}{q}+\mu_{\bar v}^--\bar v(y,t))_+
\\&\leq (k_j^\frac{1}{q}-(\mu_v^-)^\frac{1}{q})_++(\mu_{\bar v}^--\bar v(y,t))_+.
\end{split}\end{equation*}
Since $1\leq k_j\leq 1+2\gamma \xi_0^{-1}$, we use \eqref{barvv} to obtain $1\leq |k_j|+|\mu_v^-|\leq 2+2\gamma \xi_0^{-1}$ and hence
\begin{equation*}\begin{split}
(k_j^\frac{1}{q}-(\mu_v^-)^\frac{1}{q})_+\leq \gamma(|k_j|+|\mu_v^-|)^{\frac{1}{q}-1}(k_j-\mu_v^-)_+\leq \gamma\xi\tilde\omega,
\end{split}\end{equation*}
which proves \eqref{claim2}. In view of \eqref{claim2}, we deduce
\begin{equation*}\begin{split}
T_{3,2,1}&\leq\hat \Theta^{-\frac{p-1}{m}}\left[\int_{\bar t-\hat\Theta\tilde r_j^{sp}}^{\bar t}\left(\int_{\mathbb{R}^n\setminus B_\rho(x_0)}
\frac{(\bar v-\mu_{\bar v}^-)_-(y,t)^{p-1}}{|y-x_0|^{n+sp}}\,\mathrm {d}y\right)^\frac{m}{p-1}\mathrm {d}t\right]^\frac{p-1}{m}
\\&\quad+\gamma(\xi\tilde\omega)^{p-1}r^{sp\frac{p-1}{m}}\rho^{-sp}
\\&\leq\Theta^{-\frac{p-1}{m}}\widetilde{\mathrm{Tail}}_m((\bar v-\mu_{\bar v}^-)_-;\hat Q)^{p-1}+\gamma(\xi\tilde\omega)^{p-1}r^{-n\kappa}
\\&\leq \gamma(\xi\tilde\omega)^{p-1}r^{-n\kappa},
\end{split}\end{equation*}
where we used \eqref{firstTailv} in the last step.
Combining these inequalities, we arrive at
\begin{equation*}\begin{split}
T_{0,1}+T_{0,2}\leq \gamma 2^{j(n+sp)}r^{-sp}(\xi\tilde\omega)^p|Q_{r_j}|(Y_j+Z_j^{1+\kappa}).
\end{split}
\end{equation*}
At this stage, we now proceed along the lines of the proof of \cite[Lemma 4.1]{BK} to conclude that
\begin{equation}\begin{split}\label{Yj+1v}
Y_{j+1}\leq \gamma 2^{j(2p+n)}\left(Y_j^{1+\frac{sp}{n+sp}}+Y_j^\frac{sp}{n+sp}Z_j^{1+\kappa}\right)
\end{split}
\end{equation}
and
\begin{equation}\begin{split}\label{Zj+1v}
Z_{j+1}&\leq \gamma 2^{(n+2p)j}\left(Y_j+Z_j^{1+\kappa}\right),
\end{split}
\end{equation}
where the constant $\gamma$ depends only on the data and $\delta_0$.
At this point, we set
\begin{equation}\begin{split}\label{nu0v}
\nu_0=\nu_0(\delta_0)=(4\gamma)^{-\frac{m(1+\kappa)}{(m-(p-1))\zeta}}2^{-(n+2p)\frac{m(1+\kappa)}{(m-(p-1))\zeta^2}},
\end{split}
\end{equation}
where $\zeta=\min\left\{\kappa,\frac{sp}{n+sp}\right\}$.
Using the lemma on fast geometric convergence of sequences again,
we infer from \eqref{Yj+1v} and \eqref{Zj+1v} that $Y_n\to0$ as $n\to\infty$. Transforming
back to the original function $v$, we obtain the desired estimate \eqref{DeGiorgi1v}.
This
completes the proof of Lemma \ref{DeGiorgilemmaforv}.
\end{proof}
The proof of Lemma \ref{DeGiorgilemmaforv} also provides a method to address the nonlocal tail estimates for the function $\bar v$.
With the help of this method, we now proceed along the lines of the proofs of \cite[Lemma 4.2-4.4]{BK}
to obtain the following three lemmas.
\begin{lemma}\label{lemmaDeGiorgiforv2}
Let $\xi\in(0,\frac{1}{4})$ be a fixed constant and set $\hat\Theta=(\xi\tilde\omega)^{2-p}$.
Let $t_1\in(-B\Theta r^{sp},0)$ be a fixed time level. Assume that
\begin{equation}\label{2st assumptionv}
v(\cdot,t_1)\geq \mu_v^-+\xi\tilde\omega\quad\text{a.e.}\quad\text{in}\quad B_r(x_0).\end{equation}
 There exists a constant $\nu_1\in(0,1)$, depending only on the data, such that if
  \begin{equation}\begin{split}\label{secondTailv}
\frac{r^\frac{n\kappa}{p-1}}{(\nu\hat\Theta)^\frac{1}{m}}\widetilde{\mathrm{Tail}}_m((\bar v-\mu_{\bar v}^-)_-;\hat Q)\leq\xi\tilde\omega\end{split}\end{equation}
and $B_r(x_0)\times(t_1,t_1+\nu\hat\Theta r^{sp})\subset \hat Q$ hold for a fixed $\nu<\nu_1$,
then
\begin{equation}
\label{DeGiorgi2v}v(x,t)>\mu_v^-+\tfrac{1}{2}\xi\tilde\omega\qquad\text{for}\ \ \text{a.e.}\ \ (x,t)\in B_{\frac{1}{2}r}(x_0)\times(t_1,t_1+\nu\hat\Theta r^{sp}).\end{equation}
\end{lemma}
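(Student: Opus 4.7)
The plan is to adapt the initial-data De Giorgi scheme of Lemma \ref{lemmaDeGiorgi2} to the rescaled function $v$, using the tail-handling technique developed for $\bar v$ in Lemma \ref{DeGiorgilemmaforv}. I would introduce the iteration levels $k_j = \mu_v^- + \tfrac{1}{2}\xi\tilde\omega + 2^{-j-1}\xi\tilde\omega$, shrinking radii $r_j = \tfrac{1}{2}r + 2^{-j-1}r$, $\tilde r_j$, $\hat r_j$, and time-independent cutoffs $\varphi_j = \phi_j(x)\in C_0^\infty(B_{\hat r_j}(x_0))$ with $\phi_j \equiv 1$ on $B_{r_{j+1}}(x_0)$ and $|D\phi_j|\leq 2^{j+3}/r$. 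Then I would apply the Caccioppoli inequality \eqref{Cacinequality2} for $(v-k_j)_-$ on the cylinder $B_{\tilde r_j}(x_0)\times(t_1,t_1+\nu\hat\Theta r^{sp})$. Because $k_j\leq \mu_v^-+\xi\tilde\omega$, the assumption \eqref{2st assumptionv} forces $(v-k_j)_-(\cdot,t_1)\equiv 0$, so the initial datum on the right-hand side of \eqref{Cacinequality2} drops out.

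Next I would perform the change of variable $v_1(x,t)=v(x_0+x,t_1+\hat\Theta t)$ and $\bar v_1(x,t)=\bar v(x_0+x,t_1+\hat\Theta t)$, which normalizes Caccioppoli onto the cylinder $B_{\tilde r_j}\times(0,\nu r^{sp})$, and introduce the standard quantities
\begin{equation*}
Y_j=\frac{|\{v_1\leq k_j\}\cap U_j|}{|U_j|},\qquad
Z_j=\left(\fint_{0}^{\nu r^{sp}}\left(\frac{|A_-(v_1;0,r_j,k_j)|}{|B_{r_j}|}\right)^{\!\!\frac{m}{m-(p-1)}}\,\mathrm{d}t\right)^{\!\!\frac{m-(p-1)}{m(1+\kappa)}},
\end{equation*}
where $U_j=B_{r_j}\times(0,\nu r^{sp})$. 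Since $(v_1-k_j)_-\leq \xi\tilde\omega$ on $U_j$, the interior terms of Caccioppoli are bounded by $\gamma\, 2^{jp}r^{-sp}(\xi\tilde\omega)^p|U_j|Y_j$.

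The delicate term is the nonlocal tail $T_3$, which involves $(\bar v_1-k_j^{1/q})_-^{p-1}$ rather than $(v_1-k_j)_-^{p-1}$. I would reuse verbatim the treatment from Lemma \ref{DeGiorgilemmaforv}: split the $y$-integral into $B_\rho\setminus B_{\tilde r_j}$ and $\mathbb{R}^n\setminus B_\rho$. On $B_\rho$ one has $\bar v_1=v_1^{1/q}$, and since $1\leq|v_1|+|k_j|\leq 1+2\gamma\xi_0^{-1}+((1+\xi_0)/\xi_0)^q$ stays in a compact set bounded away from zero, the map $s\mapsto s^{1/q}$ is Lipschitz there, so $(\bar v_1-k_j^{1/q})_-\leq \gamma\xi\tilde\omega$ on this piece. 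Outside $B_\rho$ I apply the pointwise estimate $(\bar v_1-k_j^{1/q})_-\leq \gamma\xi\tilde\omega+(\bar v_1-\mu_{\bar v}^-)_-$ (the analogue of \eqref{claim2}), Hölder's inequality in time, and the tail hypothesis \eqref{secondTailv}. Combining all contributions,
\begin{equation*}
T_{0,1}+T_{0,2}\leq \gamma\, 2^{j(n+sp)} r^{-sp}(\xi\tilde\omega)^p |U_j|\bigl(Y_j+Z_j^{1+\kappa}\bigr).
\end{equation*}

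Finally, I would invoke Hölder's inequality and the parabolic Sobolev embedding from Lemma \ref{embedding} (together with Lemma \ref{b<a}) to translate the energy estimate into recursive inequalities for $Y_{j+1}$ and $Z_{j+1}$. Exactly as in the proof of Lemma \ref{lemmaDeGiorgi2}, the shortness of the time interval $(0,\nu r^{sp})$ contributes a power of $\nu$, yielding
\begin{equation*}
Y_{j+1}\leq \gamma\, 2^{j(2p+n)}\nu^{\frac{sp}{n+sp}}\bigl(Y_j^{1+\frac{sp}{n+sp}}+Y_j^{\frac{sp}{n+sp}}Z_j^{1+\kappa}\bigr),
\end{equation*}
\begin{equation*}
Z_{j+1}\leq \gamma\, 2^{j(n+2p)}\nu^{1-\frac{m-(p-1)}{m(1+\kappa)}}\bigl(Y_j+Z_j^{1+\kappa}\bigr).
\end{equation*}
Choosing $\nu_1$ as in \eqref{nu1}, the fast geometric convergence lemma gives $Y_j\to 0$, and rescaling back to $v$ produces \eqref{DeGiorgi2v}. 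The main obstacle throughout is the tail piece; but because $k_j\geq \mu_v^-=1$ keeps the $x\mapsto x^{1/q}$ transformation Lipschitz on the relevant range and the tail hypothesis \eqref{secondTailv} is tailored to absorb the remaining contribution, this obstacle is already resolved by the method of Lemma \ref{DeGiorgilemmaforv} and requires no new idea beyond a careful reassembly.
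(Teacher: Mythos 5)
Your proposal is correct and follows essentially the route the paper intends for this omitted proof: the initial-data De Giorgi iteration of Lemma \ref{lemmaDeGiorgi2} (with the vanishing initial term from \eqref{2st assumptionv}, the rescaling by $\nu\hat\Theta$, and the $\nu$-powers entering the recursions for $Y_j$ and $Z_j$), combined with the tail treatment of Lemma \ref{DeGiorgilemmaforv}, namely splitting the nonlocal term at $B_\rho$, using that $s\mapsto s^{1/q}$ is Lipschitz on the range of $v$ fixed by \eqref{barvv}, and invoking the analogue of \eqref{claim2} together with \eqref{secondTailv}. The only cosmetic difference is that the admissible $\nu_1$ here carries the exponent $n+2p$ rather than $n+2p+q$ as in \eqref{nu1}, which is immaterial since it depends only on the data.
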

\begin{lemma}\label{measureforv}
Let $\alpha\in(0,1)$ be given and let $t_*\in(-B\Theta r^{sp},0)$ be a fixed time level.
Assume that
\begin{equation}\label{t*bartvassumption}\large|\large\{v(\cdot,t_*)\leq\mu_v^+-\xi\tilde\omega\large
\}\cap B_r(x_0)\large|\geq\alpha|B_r|\end{equation}
 holds for a fixed $\xi\in(0,\frac{1}{4})$.
Then, there exist constants $\epsilon$ and $\delta\in (0,1)$ depending only upon the data and $\alpha$, such that if $B_r(x_0)\times (t_*,t_*+\delta\hat\Theta r^{sp})\subset\hat Q$ and
 \begin{equation}\begin{split}\label{measureTailforv}
\frac{r^\frac{n\kappa}{p-1}}{(\delta\hat\Theta)^\frac{1}{m}}\widetilde{\mathrm{Tail}}_m((\bar v-\mu_{\bar v}^+)_+;\hat Q)\leq\xi\tilde\omega\end{split}\end{equation}
hold,
then
\begin{equation}\label{t*bartforv}\large|\large\{v(\cdot,t)\leq\mu_v^+-\epsilon\xi\tilde\omega\large
\}\cap B_r(x_0)\large|\geq\tfrac{1}{2}\alpha|B_r|\end{equation}
holds for all $t\in (t_*,t_*+\delta\hat\Theta r^{sp})$. Here, $\hat\Theta=(\xi\tilde\omega)^{2-p}$.
Moreover, the functional dependence of $\delta$ on the measure-theoretical parameter
$\alpha$ is of the form
\begin{equation}\begin{split}\label{deltaalphadenpendence}
\delta=\delta(\alpha)=\frac{1}{c(8n)^{n+p+1}}\alpha^{n+p+1}
\end{split}\end{equation}
for $c>1$ depending only on the data.
\end{lemma}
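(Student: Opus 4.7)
The plan is to mirror the measure-propagation argument of Lemma \ref{measure}, now carried out on the transformed function $v$ using the Caccioppoli-type estimate from Lemma \ref{caclemmatype2}. I would fix $k=\mu_v^+-\xi\tilde\omega$ and $\tilde k=\mu_v^+-\epsilon\xi\tilde\omega$ with $\epsilon\in(0,1)$ to be chosen, and take a time-independent cutoff $\phi=\phi(x)\in C_0^\infty(B_r(x_0))$ with $\phi\equiv 1$ on $B_{(1-\sigma)r}(x_0)$ and $|D\phi|\le(\sigma r)^{-1}$. Applying \eqref{Cacinequality2} to $(v-k)_+$ on $B_r(x_0)\times(t_*,t)$, and noting $\partial_t\phi^p\equiv 0$, for every $t\in(t_*,t_*+\delta\hat\Theta r^{sp})$ one obtains
\[
\int_{B_r\times\{t\}}(v-k)_+^2\phi^p\,\mathrm{d}x\le\int_{B_r\times\{t_*\}}(v-k)_+^2\,\mathrm{d}x+T_{\mathrm{loc}}+T_{\mathrm{tail}},
\]
where $T_{\mathrm{loc}}$ is the energy piece controlled by $(\sigma r)^{-sp}\iint(v-k)_+^p$ and $T_{\mathrm{tail}}$ is the nonlocal tail involving $(\bar v-k^{1/q})_+(y,t)^{p-1}|y-x_0|^{-n-sp}$.

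The left-hand side is bounded from below by $((1-\epsilon)\xi\tilde\omega)^2\,|\{v(\cdot,t)>\tilde k\}\cap B_{(1-\sigma)r}(x_0)|$. On the right-hand side, the initial slice is estimated by $(1-\alpha)(\xi\tilde\omega)^2|B_r|$ through the hypothesis \eqref{t*bartvassumption} together with $(v-k)_+\le\xi\tilde\omega$; using $\hat\Theta=(\xi\tilde\omega)^{2-p}$, the energy term $T_{\mathrm{loc}}$ is at most $C\sigma^{-p}\delta(\xi\tilde\omega)^2|B_r|$. For $T_{\mathrm{tail}}$ the plan is to exploit that $k^{1/q}$ lies within an $O(\xi\tilde\omega)$ distance from $\mu_{\bar v}^+$, a consequence of the mean value theorem applied to $\tau\mapsto\tau^{1/q}$ on the bounded interval provided by \eqref{barvv}, so as to write
\[
(\bar v-k^{1/q})_+\le(\bar v-\mu_{\bar v}^+)_++\gamma\xi\tilde\omega.
\]
The $\gamma\xi\tilde\omega$ contribution yields $C\sigma^{-n-sp}\delta(\xi\tilde\omega)^2|B_r|$ after integration of the singular kernel over $\mathbb{R}^n\setminus B_r(x_0)$, while the $(\bar v-\mu_{\bar v}^+)_+$ contribution is absorbed via H\"older's inequality together with \eqref{measureTailforv} and the identity $n\kappa=sp(m-(p-1))/m$; the scaling $\hat\Theta=(\xi\tilde\omega)^{2-p}$ matches exactly to produce another $C\sigma^{-n-sp}\delta(\xi\tilde\omega)^2|B_r|$. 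Adding the trivial bound $|B_r\setminus B_{(1-\sigma)r}|\le n\sigma|B_r|$ leads to
\[
|\{v(\cdot,t)\le\tilde k\}\cap B_r|\ge\Bigl(1-\frac{(1-\alpha)+C\sigma^{-n-sp}\delta}{(1-\epsilon)^2}-n\sigma\Bigr)|B_r|.
\]

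The last step is to choose the parameters: pick $\epsilon$ and $\sigma$ of order $\alpha$ so that $(1-\alpha)/(1-\epsilon)^2+n\sigma\le 1-\tfrac{3\alpha}{4}$, and then select $\delta$ small enough to absorb the residual $C\sigma^{-n-sp}\delta/(1-\epsilon)^2$; tracking these elementary choices yields the prescribed functional dependence $\delta(\alpha)=\alpha^{n+p+1}/[c(8n)^{n+p+1}]$. The main obstacle, not present in the corresponding Lemma \ref{measure} for $u$, is the presence of $(\bar v-k^{1/q})_+^{p-1}$ rather than $(v-k)_+^{p-1}$ in the nonlocal tail: the comparison between $k^{1/q}$ and $\mu_{\bar v}^+$ must be quantitative, and the resulting tail term must be absorbed into the $(\xi\tilde\omega)^2$ scale; this balance succeeds precisely because $\hat\Theta=(\xi\tilde\omega)^{2-p}$ and $\kappa=(sp/n)(m-(p-1))/m$ fit together.
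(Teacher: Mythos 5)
Your argument is correct and follows essentially the route the paper intends: the paper omits this proof, pointing to the adaptation of \cite[Lemmas 4.2--4.4]{BK} combined with the tail-handling device introduced in the proof of Lemma \ref{DeGiorgilemmaforv} (comparing $k^{1/q}$ with $\mu_{\bar v}^+$ via the mean value theorem and \eqref{barvv}, then splitting the tail and using H\"older with \eqref{measureTailforv} and $n\kappa=sp\frac{m-(p-1)}{m}$), which is exactly what you do, in the same spirit as Lemma \ref{measure} but with the simpler quadratic time term $(v-k)_+^2$ from Lemma \ref{caclemmatype2}. The parameter bookkeeping also checks out: with $\epsilon\sim\alpha$ and $\sigma\sim\alpha/(8n)$ one needs $\delta\lesssim\alpha\left(\alpha/(8n)\right)^{n+sp}$, and since $sp<p$ the stated choice $\delta=\alpha^{n+p+1}/\bigl[c(8n)^{n+p+1}\bigr]$ suffices.
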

\begin{lemma}\label{measureshrinkinglemmav}
Let $\xi$ and $\sigma_*\in(0,1)$ be given constants, and let $\tilde\Theta=(\sigma_*\xi\tilde\omega)^{2-p}$.
Assume that $Q_{2r}^{(\tilde\Theta)}(x_0,0)\subset \hat Q$ and
 \begin{equation}\begin{split}\label{measurevforvassumption}
\large|\large\{v(\cdot,t)\leq\mu_v^+-\xi\tilde\omega\large
\}\cap B_r(x_0)\large|\geq \alpha|B_r|\end{split}\end{equation}
holds for $a.e.$ $t\in(-\tilde\Theta r^{sp},0)$.
Moreover, suppose that
 \begin{equation}\begin{split}\label{measureTail1v}
\frac{r^\frac{n\kappa}{p-1}}{
\tilde\Theta^\frac{1}{m}}\widetilde{\mathrm{Tail}}_m((\bar v-\mu_{\bar v}^+)_+;\hat Q)\leq \sigma_*\xi\tilde\omega.\end{split}\end{equation}
Then, there exists a constant $\gamma$ that can be determined a priori only in terms of the data, such that
\begin{equation}\label{measureQv}\large|\large\{v\geq\mu_v^+-\tfrac{1}{4}\xi\sigma_*\tilde\omega\large
\}\cap Q_{r}^{(\tilde\Theta)}(x_0,0)\large|\leq\gamma \sigma_*^{p-1}\alpha^{-1}|Q_{r}^{(\tilde\Theta)}|.\end{equation}
\end{lemma}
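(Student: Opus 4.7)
The plan is to mimic the strategy of Lemma \ref{measureshrinkinglemma}, but adapted to the transformed function $v$ via the Caccioppoli inequality of Lemma \ref{caclemmatype2}. First, I would fix the truncation level $k=\mu_v^+-\tfrac12\sigma_*\xi\tilde\omega$ and choose a purely spatial cutoff $\phi\in C_0^\infty(B_{\frac{3}{2}r}(x_0))$ with $\phi\equiv 1$ on $B_r(x_0)$ and $|D\phi|\le 2/r$. Applying \eqref{Cacinequality2} to $(v-k)_+$ over the cylinder $B_{2r}(x_0)\times(-\tilde\Theta r^{sp},0)$, the inequality splits into four terms: an initial boundary term at $t=-\tilde\Theta r^{sp}$, a local gradient term $\gamma r^{-sp}\iint (v-k)_+^p$, a nonlocal tail term involving $(\bar v-k^{1/q})_+$, and the key cross term
\[
T_0:=\iint (v-k)_+(x,t)\Big(\int_{B_r(x_0)}\tfrac{(v-k)_-(y,t)^{p-1}}{|x-y|^{n+sp}}\,\dd y\Big)\,\dd x\dd t,
\]
which we will estimate from below.

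The second step is to exploit the measure assumption \eqref{measurevforvassumption}. Since on $\{v\le\mu_v^+-\xi\tilde\omega\}$ we have $(v-k)_-\ge(1-\tfrac12\sigma_*)\xi\tilde\omega\ge\tfrac12\xi\tilde\omega$, and $|x-y|\le 2r$ for $x,y\in B_r$, the inner integral is bounded below by $\gamma\alpha(\xi\tilde\omega)^{p-1}r^{-sp}$ for a.e.\ $t\in(-\tilde\Theta r^{sp},0)$. Restricting the outer integration to the set $S:=\{v\ge\mu_v^+-\tfrac14\xi\sigma_*\tilde\omega\}\cap Q_r^{(\tilde\Theta)}(x_0,0)$, where $(v-k)_+\ge\tfrac14\xi\sigma_*\tilde\omega$, yields
\[
T_0\ge \gamma\alpha\sigma_*(\xi\tilde\omega)^p r^{-sp}|S|.
\]

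Third, I would bound each term on the right-hand side by $\gamma(\sigma_*\xi\tilde\omega)^p r^{-sp}|Q_r^{(\tilde\Theta)}|$. The local terms are immediate from $(v-k)_+\le\tfrac12\sigma_*\xi\tilde\omega$ on $Q_{2r}^{(\tilde\Theta)}$ together with the identity $\tilde\Theta=(\sigma_*\xi\tilde\omega)^{2-p}$; in particular, the initial slice contributes $(\sigma_*\xi\tilde\omega)^2|B_{2r}|=(\sigma_*\xi\tilde\omega)^p\tilde\Theta|B_r|=(\sigma_*\xi\tilde\omega)^p r^{-sp}|Q_r^{(\tilde\Theta)}|$. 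The tail term needs care: I would use $(\bar v-k^{1/q})_+\le (\bar v-\mu_{\bar v}^+)_++(\mu_{\bar v}^+-k^{1/q})$ and apply the mean value theorem together with \eqref{infsupbarv}-\eqref{barvv} to show $\mu_{\bar v}^+-k^{1/q}\le\gamma\sigma_*\xi\tilde\omega$, so the near-diagonal piece is harmless. The genuinely nonlocal part is then controlled via H\"older's inequality in time (with exponents $m/(p-1)$ and $m/(m-(p-1))$) and the tail assumption \eqref{measureTail1v}; here the precise choice $\kappa=\tfrac{sp}{n}\tfrac{m-(p-1)}{m}$ makes the powers of $r$ cancel and produces exactly $\gamma(\sigma_*\xi\tilde\omega)^p r^{-sp}|Q_r^{(\tilde\Theta)}|$.

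Combining the lower bound on $T_0$ with these upper bounds gives $|S|\le\gamma\sigma_*^{p-1}\alpha^{-1}|Q_r^{(\tilde\Theta)}|$, which is \eqref{measureQv}. The main obstacle is the nonlocal tail term: because the Caccioppoli tail for $v$ naturally involves $(\bar v-k^{1/q})_\pm$ rather than $(\bar v-\mu_{\bar v}^+)_+$ directly, one must carefully compare the two via the exponent $q$ and exploit the two-sided bound $1\le\bar v\le(1+\xi_0)/\xi_0$ from \eqref{barvv} to ensure that the implicit constants depend only on the data.
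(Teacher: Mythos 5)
Your proposal is correct and takes essentially the intended route: the paper omits this proof, referring to the analogue for $u$ (Lemma \ref{measureshrinkinglemma}) and to the tail-handling device from the proof of Lemma \ref{DeGiorgilemmaforv}, and your argument reproduces exactly that — the Caccioppoli inequality of Lemma \ref{caclemmatype2} with $k=\mu_v^+-\tfrac12\sigma_*\xi\tilde\omega$ (note $k>\mu_v^-=1\geq0$, so the lemma applies), the measure hypothesis \eqref{measurevforvassumption} to bound the cross term from below on $\{v\geq\mu_v^+-\tfrac14\sigma_*\xi\tilde\omega\}$, and the splitting $(\bar v-k^{1/q})_+\leq(\bar v-\mu_{\bar v}^+)_++\gamma\sigma_*\xi\tilde\omega$ (justified by the mean value theorem and \eqref{barvv}--\eqref{infsupbarv}, with data-dependent constant) combined with H\"older in time and \eqref{measureTail1v} for the genuinely nonlocal part. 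The exponent bookkeeping with $\kappa=\tfrac{sp}{n}\tfrac{m-(p-1)}{m}$ and $\tilde\Theta=(\sigma_*\xi\tilde\omega)^{2-p}$ indeed makes every right-hand term comparable to $(\sigma_*\xi\tilde\omega)^p r^{-sp}|Q_r^{(\tilde\Theta)}|$, which yields \eqref{measureQv}.
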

The proofs of Lemma \ref{lemmaDeGiorgiforv2}-\ref{measureshrinkinglemmav} are omitted. It is natural to relate the tail conditions for $\bar v$ to the initial condition \eqref{omegaxi0}.
This motivates us to establish the following result.
\begin{lemma}\label{tailsatisfylemma} Let $\tilde \nu_*\in(0,1)$ be a fixed constant. Then, there exists a constant $\sigma^\prime>0$ depending only upon the data
and $\tilde \nu_*$, such that
if $r=\sigma^\prime\rho$, then
\begin{equation}\begin{split}\label{tailsatisfy}
\frac{r^\frac{n\kappa}{p-1}}{
\Theta^\frac{1}{m}}\widetilde{\mathrm{Tail}}_m((\bar v-\mu_{\bar v}^\pm)_\pm;\hat Q)\leq \tilde\nu_*\tilde\omega.\end{split}\end{equation}
\end{lemma}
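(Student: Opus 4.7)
The plan is to transport the tail of $(\bar v - \mu_{\bar v}^\pm)_\pm$ on $\hat Q$ back to a tail of $(u - \mu^\pm)_\pm$ on $Q$ through the space--time rescaling that defines $\bar v$, then to decompose the spatial integral into an annular piece on which $u$ is pinned by the oscillation on $Q_{i_0-1}$ and an outer piece controlled via the inductive tail bound \eqref{i0AltTailall}, and finally to absorb the geometric small parameter $(\sigma')^{n\kappa/(p-1)}$ into $\tilde\nu_*$. I will treat only the minus case; the plus case is strictly analogous with $\mu^-,\mu_{i_0-1}^-$ replaced by $\mu^+,\mu_{i_0-1}^+$.

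Since $\bar v(y,t) = u(y, t_0 + (\mu^-)^{q-p+1}t)/\mu^-$ and $\mu_{\bar v}^- = 1$, we have $(\bar v - 1)_-(y,t) = (\mu^-)^{-1}(\mu^- - u(y,\tau))_+$ with $\tau = t_0 + (\mu^-)^{q-p+1}t$. Changing variable in the time integral gives
\begin{equation*}
\widetilde{\mathrm{Tail}}_m((\bar v - 1)_-;\hat Q) = (\mu^-)^{(p-q-1)/m - 1}\,\widetilde{\mathrm{Tail}}_m((u - \mu^-)_-; Q),
\end{equation*}
since the temporal Jacobian contributes $(\mu^-)^{p-q-1}$, while $(\mu^-)^{-(p-1)}$ inside the inner integral, raised to the power $m/(p-1)$, contributes $(\mu^-)^{-m}$.

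Next, I split $\mathbb{R}^n \setminus B_\rho(x_0) = (B_{\rho_{i_0-1}}(x_0)\setminus B_\rho(x_0)) \cup (\mathbb{R}^n \setminus B_{\rho_{i_0-1}}(x_0))$ and apply Minkowski's inequality in $L^{m/(p-1)}$ in the time variable. On the inner annulus, the inclusion $Q \subset Q_{i_0-1}$ forces $u \geq \mu_{i_0-1}^-$ throughout, so $(u-\mu^-)_- \leq \mu^- - \mu_{i_0-1}^- \leq \omega_{i_0-1}$, producing a contribution bounded by $C\omega_{i_0-1}\rho^{-sp/(p-1)}(A\theta\rho^{sp})^{1/m}$. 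On the outer region I use $(u-\mu^-)_- \leq (u-\mu_{i_0-1}^-)_- + \omega_{i_0-1}$ and apply Minkowski once more: the first summand is controlled by $\widetilde{\mathrm{Tail}}_m((u-\mu_{i_0-1}^-)_-; Q_{i_0-1})$, hence by \eqref{i0AltTailall}, while the constant summand contributes at most $C\omega_{i_0-1}\rho_{i_0-1}^{-sp/(p-1)}(A\theta\rho^{sp})^{1/m}$.

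Using the identity $n\kappa/(p-1) = sp/(p-1) - sp/m$ to collapse the powers of $\rho$, after multiplying by the prefactor $r^{n\kappa/(p-1)}\Theta^{-1/m}(\mu^-)^{(p-q-1)/m-1}$ each of the three contributions is majorised by $C(\sigma')^{n\kappa/(p-1)}(\theta/\Theta)^{1/m}(\mu^-)^{(p-q-1)/m-1}\omega_{i_0-1}$, up to multiplicative constants depending only on the data, $A$, $\nu_*$, and $\eta$. Inserting $\theta = (\omega_{i_0}/4)^{q+1-p}$ and the lower bound $\Theta \geq C(\mu^-/\omega_{i_0})^{p-2}$ coming from $\tilde\omega \leq 2\gamma_0\omega_{i_0}/\mu^-$ via \eqref{gamma0}, and then invoking $\mu^- \geq \xi_0\omega_{i_0}$ from \eqref{ximuequivalent}, $\omega_{i_0-1}\leq(1-\eta)^{-1}\omega_{i_0}$, and the universal bound \eqref{omegavc1c2} on $\tilde\omega$, the expression simplifies to $C(\mathrm{data})(\sigma')^{n\kappa/(p-1)}\tilde\omega$. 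Choosing $\sigma'$ so small that $C(\mathrm{data})(\sigma')^{n\kappa/(p-1)} \leq \tilde\nu_*$ concludes the proof. The main technical difficulty is the interplay between the two intrinsic scales $\theta \asymp \omega_{i_0}^{q+1-p}$ and $\Theta \asymp (\mu^-/\omega_{i_0})^{p-2}$: the time-rescaling prefactor $(\mu^-)^{(p-q-1)/m-1}$ must combine with $(\theta/\Theta)^{1/m}$ to eliminate all dependence on the unknown $\mu^-$, and this cancellation is precisely what \eqref{ximuequivalent} permits by forcing $\mu^-$ and $\omega_{i_0}$ to be comparable.
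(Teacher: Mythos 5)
Your proposal is correct and follows essentially the same route as the paper: undo the intrinsic rescaling to convert the $\bar v$-tail into a $u$-tail with the factor $(\mu^-)^{(p-q-1)/m-1}$, split into the annulus $B_{\rho_{i_0-1}}\setminus B_\rho$ (controlled by $\mu^--\mu_{i_0-1}^-\leq\omega_{i_0-1}$) and the outer region (controlled by \eqref{i0AltTailall}), cancel the $\mu^-$-dependence via \eqref{ximuequivalent}, and choose $\sigma'\sim(\gamma^{-1}\tilde\nu_*)^{(p-1)/(n\kappa)}$. The only (harmless) difference is bookkeeping: the paper bounds $\Theta^{-1/m}$ by a constant at the outset using \eqref{omegavc1c2}, whereas you retain $(\theta/\Theta)^{1/m}$ and use $\Theta\gtrsim(\mu^-/\omega_{i_0})^{p-2}$, which rests on the same comparability facts.
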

\begin{proof} We only treat the case of the truncated function $(\bar v-\mu_{\bar v}^-)_-$ as the other case is similar.
In view of \eqref{omegavc1c2}, we have $\tilde\omega\leq 2\omega_v\leq 2C_2$; hence $\Theta^{-\frac{1}{m}}=\left(\frac{1}{4}\tilde \omega\right)^{\frac{p-2}{m}}\leq \left(\frac{1}{2}C_2\right)^{\frac{p-2}{m}}$. Recalling that $\bar v(x,t)=(\mu^-)^{-1}
u(x,t_0+(\mu^-)^{q-p+1}t)$ and $T_0=(\mu^-)^{p-q-1}A\theta\rho^{sp}$, we obtain
\begin{equation*}\begin{split}
&\frac{r^\frac{n\kappa}{p-1}}{\Theta^\frac{1}{m}}\widetilde{\mathrm{Tail}}_m((\bar v-\mu_{\bar v}^\pm)_\pm;\hat Q)
\\&\leq
\left(\tfrac{1}{2}C_2\right)^{\frac{p-2}{m}}r^\frac{n\kappa}{p-1}
\left[\int_{-T_0}^0\left(\int_{\mathbb{R}^n\setminus B_\rho(x_0)}
\frac{(\bar v-\mu_{\bar v}^-)_-(y,t)^{p-1}}{|y-x_0|^{n+sp}}\,\mathrm {d}y\right)^\frac{m}{p-1}\mathrm {d}t\right]^\frac{1}{m}
\\&=
\left(\tfrac{1}{2}C_2\right)^{\frac{p-2}{m}}r^\frac{n\kappa}{p-1}(\mu^-)^{\frac{p-q-1}{m}-1}
\left[\int_{t_0-A\theta\rho^{sp}}^{t_0}\left(\int_{\mathbb{R}^n\setminus B_\rho(x_0)}
\frac{(u-\mu^-)_-(y,t)^{p-1}}{|y-x_0|^{n+sp}}\,\mathrm {d}y\right)^\frac{m}{p-1}\mathrm {d}t\right]^\frac{1}{m}
\\&=:\left(\tfrac{1}{2}C_2\right)^{\frac{p-2}{m}}r^\frac{n\kappa}{p-1}(\mu^-)^{\frac{p-q-1}{m}-1}T_1,
\end{split}\end{equation*}
where the constant $C_2$ depends only upon the data.
Moreover, we observe that $\mu^--\mu_{-1}^-\leq \mu_{-1}^+-\mu_{-1}^-\leq \omega_{-1}=(1-\eta)^{-1}\omega$
and it follows that
\begin{equation*}\begin{split}
T_1&\leq \left[\int_{t_0-A\theta\rho^{sp}}^{t_0}\left(\int_{\mathbb{R}^n\setminus B_{\rho_{-1}}(x_0)}
\frac{(u-\mu^-)_-(y,t)^{p-1}}{|y-x_0|^{n+sp}}\,\mathrm {d}y\right)^\frac{m}{p-1}\mathrm {d}t\right]^\frac{1}{m}
\\&\quad+\left[\int_{t_0-A\theta\rho^{sp}}^{t_0}\left(\int_{B_{\rho_{-1}}(x_0)\setminus B_\rho(x_0)}
\frac{(u-\mu^-)_-(y,t)^{p-1}}{|y-x_0|^{n+sp}}\,\mathrm {d}y\right)^\frac{m}{p-1}\mathrm {d}t\right]^\frac{1}{m}
\\&\leq \gamma(A\theta\rho^{sp})^\frac{1}{m}\rho^{-\frac{sp}{p-1}}\omega+\widetilde{\mathrm{Tail}}_m((u-\mu_{-1}^-)_-;Q_{-1})
\end{split}\end{equation*}
Taking into account \eqref{i0AltTailall}, \eqref{ximuequivalent},
$\theta=\left(\frac{1}{4}\omega\right)^{q+1-p}$, $\tilde\rho=\lambda^{-1}\rho$ and $\omega_{\bar v}\leq \gamma\omega_{v}\leq \gamma\tilde\omega$,
 we deduce
\begin{equation*}\begin{split}
\frac{r^\frac{n\kappa}{p-1}}{\Theta^\frac{1}{m}}&\widetilde{\mathrm{Tail}}_m((\bar v-\mu_{\bar v}^\pm)_\pm;\hat Q)
\\&\leq
\gamma\left( \frac{r}{\rho}\right)^\frac{n\kappa}{p-1}\left(\frac{\mu^-}{\omega}\right)^{\frac{p-q-1}{m}}\frac{\omega}{\mu^-}
\leq \gamma (\sigma^\prime)^\frac{n\kappa}{p-1}\omega_{\bar v}\leq \gamma (\sigma^\prime)^\frac{n\kappa}{p-1}\tilde \omega.
\end{split}\end{equation*}
With the choice of $\sigma^\prime=(\gamma^{-1}\tilde\nu_*)^\frac{p-1}{n\kappa}$, we get \eqref{tailsatisfy},
which completes the proof of the lemma.
\end{proof}
With the help of the proceeding lemmas, we can now establish an oscillation decay estimate for $u$, and the following proposition is our
main result in this subsection.
\begin{proposition}\label{oscvproposition}
There exist constants $\tilde\nu_*<1$ and $\frac{1}{2}<\tilde\eta<1$ depending only upon the data, such that
if
\begin{equation}\begin{split}\label{tailsatisfyvv}
\frac{r^\frac{n\kappa}{p-1}}{
\Theta^\frac{1}{m}}\widetilde{\mathrm{Tail}}_m((\bar v-\mu_{\bar v}^\pm)_\pm;\hat Q)\leq \tilde\nu_*\tilde\omega,\end{split}\end{equation}
then, we have
\begin{equation}\begin{split}\label{oscforv}
\essosc_{\hat Q_0} v\leq (1-\tilde\eta)\tilde\omega,
 \end{split}\end{equation}
 where $\hat Q_0=Q_{\frac{1}{4}r}^{(\Theta)}(x_0,0)$.
\end{proposition}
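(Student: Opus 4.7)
The plan is to carry out the standard two-alternative intrinsic scaling argument for $v$, taking advantage of the fact that $v$ is bounded away from zero by \eqref{infsupbarv}--\eqref{barvomega}, so that the equation \eqref{weaksolutionforv} effectively behaves like a fractional $p$-Laplace equation. I first fix a small parameter $\xi\in(0,\tfrac{1}{4})$, set $\hat\Theta=(\xi\tilde\omega)^{2-p}$, and choose the constant $B$ so large that the cylinder $Q_r^{(B\Theta)}(x_0,0)$ is tall enough to accommodate every subsequent time-propagation step. Shrinking $\tilde\nu_*$ if necessary, I arrange that the single tail bound \eqref{tailsatisfyvv} implies every tail hypothesis needed in Lemmas \ref{DeGiorgilemmaforv}, \ref{lemmaDeGiorgiforv2}, \ref{measureforv} and \ref{measureshrinkinglemmav}.

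Then I distinguish two complementary cases according to whether there is some time level $\bar t\in[-B\Theta r^{sp},-\hat\Theta r^{sp}]$ with
\begin{equation*}
\bigl|\{v\leq\mu_v^-+\xi\tilde\omega\}\cap Q_r^{(\hat\Theta)}(x_0,\bar t)\bigr|\leq\nu_0\bigl|Q_r^{(\hat\Theta)}(x_0,\bar t)\bigr|,
\end{equation*}
where $\nu_0$ is the constant from Lemma \ref{DeGiorgilemmaforv}. In this first alternative, Lemma \ref{DeGiorgilemmaforv} (applied with sign ``$-$'' and $\delta_0=1$) gives $v\geq\mu_v^-+\tfrac{1}{2}\xi\tilde\omega$ on $Q_{r/2}^{(\hat\Theta)}(x_0,\bar t)$. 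Taking $t_1=\bar t$ as initial datum for Lemma \ref{lemmaDeGiorgiforv2} (with $\xi$ replaced by $\xi/2$) and iterating the forward time-propagation finitely many times until $t=0$ is reached---the required number of iterations depending only on $B$ and $\nu_1$---yields $v\geq\mu_v^-+c_1\tilde\omega$ on $\hat Q_0$ for a constant $c_1>0$ determined by the data. Hence $\essosc_{\hat Q_0}v\leq\omega_v-c_1\tilde\omega\leq(1-c_1)\tilde\omega$.

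In the complementary second alternative, the opposite measure inequality holds at every admissible $\bar t$. Since $\mu_v^-+\xi\tilde\omega\leq\mu_v^+-\xi\tilde\omega$ (which follows from $\omega_v>\tfrac{1}{2}\tilde\omega$ together with $\xi\leq\tfrac{1}{4}$), a pigeonhole in the time variable produces a slice $t_*$ at which $|\{v(\cdot,t_*)\leq\mu_v^+-\xi\tilde\omega\}\cap B_r(x_0)|\geq\alpha|B_r|$ for some $\alpha=\alpha(\nu_0)>0$. Lemma \ref{measureforv} propagates this instantaneous measure estimate forward over a time interval of length $\delta(\alpha)\hat\Theta r^{sp}$, with $\delta(\alpha)$ given by \eqref{deltaalphadenpendence}, and a sufficiently large $B$ guarantees that this interval covers $(-\tilde\Theta r^{sp},0)$ where $\tilde\Theta=(\sigma_*\epsilon\xi\tilde\omega)^{2-p}$ with $\sigma_*$ yet to be fixed. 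Lemma \ref{measureshrinkinglemmav} then yields
\begin{equation*}
\bigl|\{v\geq\mu_v^+-\tfrac{1}{4}\sigma_*\epsilon\xi\tilde\omega\}\cap Q_r^{(\tilde\Theta)}(x_0,0)\bigr|\leq\gamma\sigma_*^{p-1}\alpha^{-1}\bigl|Q_r^{(\tilde\Theta)}(x_0,0)\bigr|,
\end{equation*}
and choosing $\sigma_*$ so small that this density falls below the threshold $\nu_0$ of Lemma \ref{DeGiorgilemmaforv} applied with sign ``$+$'' produces $v\leq\mu_v^+-c_2\tilde\omega$ on $Q_{r/2}^{(\tilde\Theta)}(x_0,0)\supset\hat Q_0$ for a constant $c_2>0$. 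This gives $\essosc_{\hat Q_0}v\leq(1-c_2)\tilde\omega$. Setting $\tilde\eta=\min\{c_1,c_2\}$ (and enlarging it toward $1$ if necessary by combining with the trivial bound $\omega_v<\tilde\omega$) completes the proof.

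The hard part will be the quantitative matching of parameters in the second alternative: the intrinsic time scales $\hat\Theta$ and $\tilde\Theta$ differ by the factor $(\sigma_*\epsilon)^{2-p}$, so one must choose $B$, $\sigma_*$ and the exponent $\epsilon$ of Lemma \ref{measureforv} in such a way that the three lemmas chain together cleanly---namely, that the forward-propagation interval coming from Lemma \ref{measureforv} actually covers the cylinder used in Lemma \ref{measureshrinkinglemmav}, and that the shrinking estimate of the latter meets the De Giorgi hypothesis of Lemma \ref{DeGiorgilemmaforv} at the correct intrinsic scale. A second delicate point is to check that the single tail condition \eqref{tailsatisfyvv} implies each of the scale-dependent tail conditions in Lemmas \ref{DeGiorgilemmaforv}--\ref{measureshrinkinglemmav}; this is what forces the specific smallness of $\tilde\nu_*$ in the statement of Proposition \ref{oscvproposition}.
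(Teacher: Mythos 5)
Your second-alternative outline follows the same route as the paper (pigeonhole in time, Lemma \ref{measureforv}, Lemma \ref{measureshrinkinglemmav}, then Lemma \ref{DeGiorgilemmaforv} with the ``$+$'' sign at the intrinsic scale $\tilde\Theta$), and the parameter matching you defer as ``the hard part'' is indeed where the content lies: the paper resolves it by choosing $\xi_2$ inside $[\tfrac14\delta_1^{1/(p-2)},\tfrac14(2\nu_0^{-1}\delta_1)^{1/(p-2)}]$ so that $\delta_1(\xi_2\tilde\omega)^{2-p}r^{sp}=\bar t-\hat t$ exactly, by invoking Lemma \ref{DeGiorgilemmaforv} with $\delta_0=4^{2-p}$ (constant $\tilde\nu_0=\nu_0(4^{2-p})$), and by fixing $\sigma_*=(\gamma^{-1}\nu_0\tilde\nu_0)^{1/(p-1)}$ and $B$ as in \eqref{defB}; the admissible $\tilde\nu_*$ and $\tilde\eta$ are then read off in \eqref{defnu*}--\eqref{defetatilde}.

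The genuine gap is in your first alternative. You propose to apply Lemma \ref{DeGiorgilemmaforv} and then ``iterate the forward time-propagation of Lemma \ref{lemmaDeGiorgiforv2} finitely many times until $t=0$ is reached.'' This does not work as stated, for two reasons. First, each application of Lemma \ref{lemmaDeGiorgiforv2} converts initial data on a ball $B_r$ into positivity only on $B_{r/2}$, so after $k\ge2$ iterations you control $v$ only on a ball of radius $2^{-k-1}r<\tfrac14 r$, which no longer covers $\hat Q_0=Q_{\frac14 r}^{(\Theta)}(x_0,0)$; and the paper has deliberately not proved any expansion-of-positivity lemma that would let you recover the lost radius. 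Second, the time gained at the $k$-th step is of order $\nu\,(2^{-k}\xi\tilde\omega)^{2-p}(2^{-k}r)^{sp}\sim 2^{k(p-2-sp)}$ times the first step, so when $sp\ge p-2$ the increments do not add up to the required length, which is of order $B\Theta r^{sp}$ with $B$ large (of size $(\epsilon\delta_1^{1/(p-2)}\sigma_*)^{2-p}$); in addition every step would require its own tail hypothesis \eqref{secondTailv} at the new level and new $\nu$. The paper's argument avoids iteration altogether: it exploits the degenerate scaling ($\hat\Theta=(\xi\tilde\omega)^{2-p}\to\infty$ as $\xi\downarrow0$) by taking the level $\xi_1=\min\{\tfrac18,\tfrac14(2^{-sp}B^{-1}\nu_1)^{1/(p-2)}\}$ small \emph{depending on the already fixed} $B$, then selects $\nu\in[\nu_1',\nu_1]$ with $\nu_1'=4^{p-2}\xi_1^{p-2}$ so that $\tilde t+\nu(\xi_1\tilde\omega)^{2-p}(\tfrac12 r)^{sp}=0$ exactly; the lower bound $\nu\ge\nu_1'$ is precisely what allows \eqref{secondTailv} to be deduced from the single assumption \eqref{tailsatisfyvv}, and one application of Lemma \ref{lemmaDeGiorgiforv2} with $r$ replaced by $\tfrac12 r$ then gives positivity on $B_{\frac14 r}(x_0)\times(\tilde t,0)\supseteq\hat Q_0$. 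Finally, your parenthetical remark that $\tilde\eta$ can be ``enlarged toward $1$ by the trivial bound $\omega_v<\tilde\omega$'' is not correct: that bound only yields $\essosc_{\hat Q_0}v\le\omega_v$, and $\omega_v$ may be arbitrarily close to $\tilde\omega$, so it gives no improvement beyond $\tilde\eta=\min\{c_1,c_2\}$.
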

\begin{proof}
The proof of \eqref{oscforv}
will be divided into three steps.

Step 1: Once again, we consider two complementary cases.
Let $\nu_0>0$ be the constant claimed by Lemma \ref{DeGiorgilemmaforv} with $\delta_0=1$. Recalling that $\omega_v\leq\tilde\omega\leq 2\omega_v$,
we see that either
  \begin{itemize}
 \item[$\bullet$]
 \textbf{The first alternative}. There exists $-(B-1)\Theta r^{sp}\leq \bar t\leq 0$ such that
\begin{equation}\label{1stv}\left|\left\{(x,t)\in Q_r^{(\Theta)}(x_0,\bar t)
:v\leq \mu_v^-+\tfrac{1}{4}\tilde\omega\right\}\right|\leq \nu_0|Q_r^{(\Theta)}|\end{equation}
\end{itemize}
or this does not hold. If \eqref{1stv} does not hold, we find that the
 following second alternative holds.
  \begin{itemize}
 \item[$\bullet$]
 \textbf{The second alternative}. For any $-(B-1)\Theta r^{sp}\leq \bar t\leq 0$, there holds
\begin{equation}\label{2ndv}\left|\left\{(x,t)\in Q_r^{(\Theta)}(x_0,\bar t)
:v\leq\mu_v^-+\tfrac{1}{4}\tilde\omega\right\}\right|> \nu_0|Q_r^{(\Theta)}|.\end{equation}
\end{itemize}

Step 2: \emph{Analysis for the first alternative.}
First, we assume that $\tilde\nu_*<\frac{1}{4}$ and
hence \eqref{firstTailv} holds with $\xi=\frac{1}{4}$ and $\delta_0=1$. We now apply Lemma \ref{DeGiorgilemmaforv} with $\delta_0=1$ to conclude that for any $\xi_1<\frac{1}{8}$,
there holds
\begin{equation}
\label{initial1v}v(x,\tilde t)\geq\mu_v^-+\xi_1\tilde\omega\qquad\text{for}\ \ \text{a.e.}\ \ x\in B_{\frac{1}{2}r}(x_0),\end{equation}
where $\tilde t=\bar t-\Theta (\frac{1}{2}r)^{sp}$. Moreover, we will apply Lemma \ref{lemmaDeGiorgi2} with $\xi=\xi_1$.
To this end, we take $\xi_1=\min\{\frac{1}{8},\frac{1}{4}\left(2^{-sp}B^{-1}\nu_1\right)^\frac{1}{p-2}\}$,
where $\nu_1$ is the constant defined in Lemma \ref{lemmaDeGiorgiforv2}. It is easy to check that
\begin{equation}\label{tildet1v}\tilde t+\nu_1(\xi_1\tilde\omega)^{2-p}\left(\tfrac{1}{2}r\right)^{sp}\geq 0,\end{equation}
since $\tilde t\geq -B\Theta r^{sp}$.
Furthermore, we define $\nu_1^\prime=4^{p-2}\xi_1^{p-2}$ and observe that $\nu_1^\prime<\nu_1$. It follows that
\begin{equation}\label{tildet11v}\tilde t+\nu_1^\prime(\xi_1\tilde\omega)^{2-p}\left(\tfrac{1}{2}r\right)^{sp}\leq 0,\end{equation}
since $\tilde t\leq-(\frac{1}{4}\tilde\omega)^{2-p}(\frac{1}{2}r)^{sp}$. In view of \eqref{tildet1v}
and \eqref{tildet11v}, we find that there exists a constant $\nu\in [\nu_1^\prime,\nu_1]$, such that
$\tilde t+\nu\hat\Theta\left(\frac{1}{2}r\right)^{sp}= 0$, where $\hat\Theta=(\xi_1\tilde\omega)^{2-p}$.
At this point, we assume that $\tilde\nu_1\leq \xi_1$ and
we conclude from \eqref{tailsatisfyvv} that
\begin{equation*}\begin{split}
\frac{(\frac{1}{2}r)^\frac{n\kappa}{p-1}}{\tilde\omega^\frac{2-p}{m}}&\widetilde{\mathrm{Tail}}_m((\bar v-\mu_{\bar v}^-)_-;\hat Q)<4^\frac{p-2}{m}\xi_1\tilde\omega
\\&= (\nu_1^\prime)^\frac{1}{m}\xi_1^{1-\frac{p-2}{m}}
\tilde\omega
\leq\nu^\frac{1}{m}\xi_1^{1-\frac{p-2}{m}}
\tilde\omega,
\end{split}\end{equation*}
which in turn yields that \eqref{secondTailv} holds with $\xi=\xi_1$ and $\hat\Theta=(\xi_1\tilde\omega)^{2-p}$.
We now use Lemma \ref{lemmaDeGiorgiforv2} with $r$ replaced by $\frac{1}{2}r$ to conclude that
\begin{equation}
\label{DeGiorgiprov}
v(x,t)>\mu_v^-+\tfrac{1}{2}\xi_1\tilde\omega\qquad\text{for}\ \ \text{a.e.}\ \ (x,t)\in B_{\frac{1}{4}r}(x_0)\times(\tilde t,0),\end{equation}
which proves the inequality \eqref{oscforv} with $\tilde\eta\leq\frac{1}{2}\xi_1$.

Step 3: \emph{Analysis for the second alternative.} For any $\bar t\in(-(B-1)\Theta r^{ps},0)$, there exists a time level $\hat t\in
[\bar t-\Theta r^{sp},\bar t-\frac{1}{2}\nu_0\Theta r^{sp}]$, such that
\begin{equation*}\large|\large\{v(\cdot,\hat t)\leq\mu_v^+-\xi_2\tilde\omega\large
\}\cap B_r(x_0)\large|>\tfrac{1}{2}\nu_0|B_r|\end{equation*}
holds for any $\xi_2<\frac{1}{4}$. At this point,
we will apply Lemma \ref{measureforv} with $\alpha=\frac{1}{2}\nu_0$.
Let $\delta_1=\delta(\frac{1}{2}\nu_0)\in(0,1)$ be the constant defined in \eqref{deltaalphadenpendence}. Since
$\frac{1}{2}\nu_0\Theta r^{sp}\leq \bar t-\hat t\leq \Theta r^{sp}$, there exists a constant $\xi_2\in[\frac{1}{4}\delta_1^\frac{1}{p-2},
\frac{1}{4}(2\nu_0^{-1}\delta_1)^\frac{1}{p-2}]$ such that $\delta_1(\xi_2\tilde\omega)^{2-p} r^{sp}=\bar t-\hat t$.
Next. we claim that the upper bound of this interval is less than $\frac{1}{4}$. In view of \eqref{deltaalphadenpendence}, we find that
\begin{equation*}\frac{1}{4}(2\nu_0^{-1}\delta_1)^\frac{1}{p-2}=\frac{1}{4}\left(\frac{2^{-n-p}}{c(8n)^{n+p+1}}
\nu_0^{n+p}
\right)^\frac{1}{p-2}<\frac{1}{4},\end{equation*}
which proves the claim and the choice of $\xi_2$ is justified. To proceed further, we assume that $\tilde\nu_*\leq \delta_1^\frac{1}{p-2}$.
According to \eqref{tailsatisfyvv}, we find that \eqref{measureTailforv} holds with $\hat\Theta=(\xi_2\tilde\omega)^{2-p}$ and Lemma
\ref{measureforv} is at our disposal. We infer from \eqref{t*bartforv} and $\xi_2\geq \frac{1}{4}\delta_1^\frac{1}{p-2}$ that
\begin{equation}\label{t*bartforvv}\large|\large\{v(\cdot,t)\leq\mu_v^+-\tfrac{1}{4}\epsilon\delta_1^\frac{1}{p-2}\tilde\omega\large
\}\cap B_r(x_0)\large|\geq\tfrac{1}{4}\nu_0|B_r|\end{equation}
for all $t\in[\hat t,\bar t]$.
Here, $\epsilon\in(0,1)$
is the constant claimed by Lemma \ref{measureforv}
and depends only upon the data. Recalling that $\bar t$ is arbitrary in $[-(B-1)\Theta r^{sp}, 0]$, we infer that \eqref{t*bartforvv}
holds for any $t\in [-(B-1)\Theta r^{sp}, 0]$.

At this stage,
we will apply Lemma \ref{measureshrinkinglemmav} with $\alpha=\frac{1}{4}\nu_0$,
$\xi=\frac{1}{4}\epsilon\delta_1^\frac{1}{p-2}$
and $\tilde\Theta=(\frac{1}{4}\epsilon\delta_1^\frac{1}{p-2}\sigma_*\tilde\omega)^{2-p}$.
Let $\sigma_*>0$ be a constant that will be determined later. Furthermore, we assume that $\tilde\nu_*\leq (\frac{1}{4}\sigma_*\epsilon\delta_1^\frac{1}{p-2})^\frac{m-(p-2)}{m}$. Then, we obtain \eqref{measureTail1v}
for such a choice of $\tilde\nu_*$. We are now in a position to use Lemma \ref{measureshrinkinglemmav}.
For the fixed $\sigma_*>0$, we infer from \eqref{measureQv} that
\begin{equation}\label{measureQvv}\large|\large\{v\geq\mu_v^+-\tfrac{1}{16}\epsilon\delta_1^\frac{1}{p-2}\sigma_*\tilde\omega\large
\}\cap Q_{r}^{(\tilde\Theta)}(x_0,0)\large|\leq\gamma \sigma_*^{p-1}\nu_0^{-1}|Q_{r}^{(\tilde\Theta)}|,\end{equation}
provided that $Q_{2r}^{(\tilde\Theta)}(x_0,0)\subseteq B_{2r}(x_0)\times [-(B-1)\Theta r^{sp}, 0]$. For this inclusion to hold, it is enough to assume $B=(\epsilon\delta_1^\frac{1}{p-2}\sigma_*)^{2-p}+1$.

Let $\tilde\nu_0=\nu_0(4^{2-p})$ be the constant claimed by Lemma \ref{DeGiorgilemmaforv} with $\delta_0=4^{2-p}$ and $\bar t=0$.
At this point, we set $\sigma_*=(\gamma^{-1}\nu_0\tilde\nu_0)^\frac{1}{p-1}$, where $\gamma$ is the constant in \eqref{measureQvv}.
This also fixes the parameter
\begin{equation}\label{defB}
B=(\epsilon\delta_1^\frac{1}{p-2}(\gamma^{-1}\nu_0\tilde\nu_0)^\frac{1}{p-1})^{2-p}+1.
\end{equation}
Moreover, we assume that $\tilde \nu_*\leq 4^{(2-p)\frac{1}{m}}(\frac{1}{16}\epsilon \delta_1^\frac{1}{p-2}\sigma_*)^{1-\frac{p-2}{m}}$ and
this assumption guarantees that \eqref{firstTailv} holds with $\xi=\frac{1}{16}\epsilon \delta_1^\frac{1}{p-2}\sigma_*$.
An application of Lemma \ref{DeGiorgilemmaforv} gives us that
\begin{equation*}
v(x,t)<\mu_v^+-\tfrac{1}{32}\epsilon \delta_1^\frac{1}{p-2}\sigma_*\tilde\omega\qquad\text{for}\ \ \text{a.e.}\ \ (x,t)\in Q_{\frac{1}{2}r}
^{(\tilde\Theta)}(x_0,0).\end{equation*}
This proves the inequality \eqref{oscforv}. Finally,
we summarize the precise values of the parameters $\tilde\nu_*$ and $\tilde \eta$.
To this end, we choose
\begin{equation}\label{defnu*}
\tilde\nu_*=\min\left\{\xi_1,\delta_1^\frac{1}{p-2},(\tfrac{1}{4}\sigma_*\epsilon\delta_1^\frac{1}{p-2})^\frac{m-(p-2)}{m},
4^{(2-p)\frac{1}{m}}(\tfrac{1}{16}\epsilon \delta_1^\frac{1}{p-2}\sigma_*)^{1-\frac{p-2}{m}}\right\}
\end{equation}
and
\begin{equation}\label{defetatilde}
\tilde\eta=\min\left\{\tfrac{1}{2}\xi_1,\tfrac{1}{32}\epsilon \delta_1^\frac{1}{p-2}\sigma_*\right\}.
\end{equation}
With these choices of $\tilde\nu_*$ and $\tilde \eta$, we
conclude that the lemma holds.
\end{proof}
\subsection{H\"older regularity for the function $v$ on $\hat Q$.}
The aim of this subsection is to prove H\"older continuity for $v$ at the point $(x_0,0)$, and
the proof is divided into three steps.

Step 1: Let $\tilde\nu_*$ be the constant in \eqref{defnu*}.
According to Lemma \ref{tailsatisfylemma}, there exists a constant $\sigma^\prime<1$ such that if $r<\sigma^\prime\rho$, then \eqref{tailsatisfyvv}
holds and hence the hypotheses of Proposition \ref{oscvproposition} is fulfilled. An application of Proposition \ref{oscvproposition} gives us that
\eqref{oscforv} holds.
Now, let us denote by $\tilde r_1$ the radius $r$.
From now on, $r_0$, $\tilde\mu_0^\pm$, $\bar\mu_0^\pm$, $\tilde\omega_0$ and $\Theta_0$ stand for $\rho$,
$\mu_v^\pm$, $\mu_{\bar v}^\pm$,
$\tilde\omega$ and $\Theta$, respectively.
Moreover, we define
$\tilde\omega_1=(1-\tilde\eta)\tilde\omega_0$ and $\Theta_1=\left(\frac{1}{4}\tilde\omega_1\right)^{2-p}$, where $\tilde \eta$ is the constant in \eqref{defetatilde}. Let $B>1$ be the constant in \eqref{defB} and we set $\tilde\lambda=\frac{1}{4}(1-\tilde\eta)^\frac{p-2}{sp}B^{-\frac{1}{sp}}$.
At this point, we define $r_1=\tilde\lambda \tilde r_1$, $Q_1^\prime=Q_{r_1}^{(B\Theta_1)}(x_0,0)$, $\tilde\mu_1^-=\essinf_{Q_1^\prime}v$ and $\tilde\mu_1^+=\esssup_{Q_1^\prime}v$. From this definition, we have $Q_1^\prime\subset \hat Q_0$ and the inequality \eqref{oscforv} reads
\begin{equation*}\begin{split}\essosc_{Q_1^\prime}v=\tilde\mu_1^+-\tilde\mu_1^-\leq\essosc_{\hat Q_0}v\leq \tilde\omega_1.\end{split}\end{equation*}
Let $\tilde\sigma<\sigma^\prime$ be a constant that will be determined later.
Next, we set $\tilde\delta=\tilde\lambda\tilde\sigma$.
For $j\geq1$, we set $\tilde r_j=\tilde\sigma r_{j-1}$
and $r_j=\tilde\lambda\tilde r_{j}=\tilde\delta r_{j-1}$. Define $\tilde\omega_j=(1-\tilde\eta)\tilde\omega_{j-1}=(1-\tilde\eta)^j\tilde\omega_0$, $\Theta_j=\left(\frac{1}{4}\tilde\omega_j\right)^{2-p}$,
$\hat Q_{j-1}=Q_{\frac{1}{4}\tilde r_j}^{(\Theta_{j-1})}(x_0,0)$, $Q_j^\prime=Q_{r_j}^{(B\Theta_j)}(x_0,0)$, $\tilde\mu_j^-=\essinf_{Q_j^\prime}v$, $\tilde\mu_j^+
=\esssup_{Q_j^\prime}v$, $\bar\mu_{j}^-=\essinf_{Q_{j}^\prime}\bar v$ and $\bar\mu_{j}^+
=\esssup_{Q_{j}^\prime}\bar v$. It is easily seen that $Q_j^\prime\subseteq \hat Q_{j-1}\subseteq Q_{j-1}^\prime\subseteq \hat Q$
holds for any $j\geq1$.

Step 2: \emph{Let $j_0\geq1$ be a fixed integer. Assume that for $j=1,\cdots,j_0$, there holds
\begin{equation}\begin{split}\label{jAltTailallv}
\frac{\tilde r_j^\frac{n\kappa}{p-1}}{\Theta_{j-1}
^\frac{1}{m}}\widetilde{\mathrm{Tail}}_m((\bar v-\bar \mu_{j-1}^\pm)_\pm;Q_{j-1}^\prime)\leq\tilde\nu_*\tilde\omega_{j-1}.\end{split}\end{equation}
Through repeated application of Proposition \ref{oscvproposition}, we conclude that
\begin{equation}\begin{split}\label{assumptionforoscv}
\essosc_{\hat Q_{j-1}}v\leq\tilde\omega_j\end{split}\end{equation}
holds for any $1\leq j\leq j_0$. Then, we claim that there exists a constant $\tilde\sigma<\sigma^\prime$ depending only upon the data, such that if $\tilde r_j=\tilde\sigma r_{j-1}$ holds for any $j\geq1$,
then
\begin{equation}\begin{split}\label{j0AltTailallv}
\frac{\tilde r_{j_0+1}^\frac{n\kappa}{p-1}}{\Theta_{j_0}
^\frac{1}{m}}\widetilde{\mathrm{Tail}}_m((\bar v-\bar\mu_{j_0}^\pm)_\pm;Q_{j_0}^\prime)\leq\tilde\nu_*\tilde\omega_{j_0}.\end{split}\end{equation}}
To prove \eqref{j0AltTailallv}, we only address the case of the truncated function $(\bar v-\bar\mu_{j_0}^-)_-$ as the other case is similar.
We first consider the case when $j_0\geq2$.
For abbreviation, we write $\tilde U(y,t)=(\bar v-\bar\mu_{j_0}^-)_-(y,t)^{p-1}|y-x_0|^{-n-sp}$.
By Minkowski's inequality, we obtain
\begin{equation*}\begin{split}
&\frac{\tilde r_{j_0+1}^\frac{n\kappa}{p-1}}{\Theta_{j_0}
^\frac{1}{m}}\widetilde{\mathrm{Tail}}_m((\bar v-\bar\mu_{j_0}^-)_-;Q_{j_0}^\prime)
\leq
\sum_{l=2}^{j_0}T_l+T+T^\prime+T^{\prime\prime},
\end{split}\end{equation*}
where
\begin{equation*}\begin{split}
T_l= \frac{\tilde r_{j_0+1}^\frac{n\kappa}{p-1}}{\Theta_{j_0}
^\frac{1}{m}}\left(\int_{-B\Theta_{j_0} r_{j_0}^{sp}}^{0}
\left(\int_{B_{\frac{1}{4}\tilde r_{l-1}}(x_0)\setminus B_{\frac{1}{4}\tilde r_l}
(x_0)}\tilde U(y,t)\,\mathrm{d}y
\right)^\frac{m}{p-1}\,\mathrm{d}t\right)^\frac{1}{m},
\end{split}\end{equation*}
\begin{equation*}\begin{split}
T= \frac{\tilde r_{j_0+1}^\frac{n\kappa}{p-1}}{\Theta_{j_0}
^\frac{1}{m}}\left(\int_{-B\Theta_{j_0} r_{j_0}^{sp}}^{0}
\left(\int_{B_{\rho}(x_0)\setminus B_{\frac{1}{4}\tilde r_1}
(x_0)}\tilde U(y,t)\,\mathrm{d}y
\right)^\frac{m}{p-1}\,\mathrm{d}t\right)^\frac{1}{m},
\end{split}\end{equation*}
\begin{equation*}\begin{split}
T^{\prime}= \frac{\tilde r_{j_0+1}^\frac{n\kappa}{p-1}}{\Theta_{j_0}
^\frac{1}{m}}\left(\int_{-B\Theta_{j_0} r_{j_0}^{sp}}^{0}
\left(\int_{B_{\frac{1}{4}\tilde r_{j_0}
}(x_0)\setminus B_{r_{j_0}}
(x_0)}\tilde U(y,t)\,\mathrm{d}y
\right)^\frac{m}{p-1}\,\mathrm{d}t\right)^\frac{1}{m}
\end{split}\end{equation*}
and
\begin{equation*}\begin{split}
T^{\prime\prime}=\frac{\tilde r_{j_0+1}^\frac{n\kappa}{p-1}}{\Theta_{j_0}
^\frac{1}{m}}\left(\int_{-B\Theta_{j_0}r_{j_0}^{sp}}^{0}
\left(\int_{\mathbb{R}^n\setminus B_{\rho}(x_0)}\tilde U(y,t)\,\mathrm{d}y
\right)^\frac{m}{p-1}\,\mathrm{d}t\right)^\frac{1}{m}.
\end{split}\end{equation*}
For $l=2,\cdots, j_0$, we see that $B_{\frac{1}{4}\tilde r_{l-1}}(x_0)\times (-B\Theta_{j_0} r_{j_0}^{sp},0)\subseteq \hat
Q_{l-2}\subseteq \hat Q$.
In view of \eqref{barvv}, we find that
\begin{equation*}1\leq \essinf_{\hat Q_{l-2}} v\leq\esssup_{\hat Q_{l-2}} v\leq \left(\frac{1+\xi_0}{\xi_0}\right)^q.\end{equation*}
Taking into account
\eqref{assumptionforoscv}, we conclude that for $(y,t)\in \hat Q_{l-2}$, there holds
\begin{equation*}\begin{split}(\bar v(y,t)-\bar\mu_{j_0}^-)_-&\leq \essosc_{\hat Q_{l-2}}\bar v
=\esssup_{\hat Q_{l-2}}\bar v-\essinf_{\hat Q_{l-2}}\bar v
\\&=(\esssup_{\hat Q_{l-2}} v)^\frac{1}{q}-(\essinf_{\hat Q_{l-2}} v)^\frac{1}{q}
\\&\leq\gamma (\essinf_{\hat Q_{l-2}} v+\esssup_{\hat Q_{l-2}} v)^{\frac{1}{q}-1}\essosc_{\hat Q_{l-2}}v
\\&\leq \gamma \essosc_{\hat Q_{l-2}}v \leq \gamma \tilde\omega_{l-1}.\end{split}\end{equation*}
It follows that
\begin{equation*}\begin{split}
T_l\leq \gamma (1-\tilde\eta)^{-1}
B^\frac{1}{m}\tilde\sigma^\frac{n\kappa}{p-1}\left[(\tilde\lambda\tilde\sigma)^\frac{sp}{p-1}(1-\tilde\eta)^{-1}\right]^{j_0-l}\tilde
\omega_{j_0}.
\end{split}\end{equation*}
At this stage, we choose
\begin{equation}\begin{split}\label{tildesigmadefv}
\tilde\sigma=\min\left\{\tau_0,\ \sigma^\prime,\ \left[\tfrac{1}{100}\gamma^{-1}B^{-\frac{1}{m}}\tilde\nu_*(1-\tilde\eta)\right]
^\frac{p-1}{n\kappa}\right\},
\end{split}\end{equation}
where $\tau_0$ is the constant in \eqref{tau0}.
Consequently, we infer that
\begin{equation*}\begin{split}
\sum_{l=2}^{j_0}T_l&\leq \gamma (1-\tilde\eta)^{-1}B^\frac{1}{m}\tilde\sigma^\frac{n\kappa}{p-1}\tilde\omega_{j_0}\sum_{l=0}^\infty2^{-l}\leq\tfrac{1}{5}\tilde\nu_*
\tilde\omega_{j_0}.
\end{split}\end{equation*}
To estimate $T$, we note that
$B_{\rho}(x_0)\times (-B\Theta_{j_0} r_{j_0}^{sp},0)\subseteq \hat Q$. For $(y,t)\in
B_{\rho}(x_0)\times (-B\Theta_{j_0} r_{j_0}^{sp},0)$, we find that
\begin{equation*}\begin{split}(\bar v(y,t)-\bar\mu_{j_0}^-)_-&\leq \essosc_{\hat Q}\bar v=\omega_{\bar v} \leq \gamma \tilde\omega_0.\end{split}\end{equation*}
Taking into account that $(\tilde\lambda\tilde\sigma)^\frac{sp}{p-1}<\frac{1}{2}(1-\tilde
\eta)$, we obtain
\begin{equation*}\begin{split}
T&\leq \gamma B^\frac{1}{m}\tilde\sigma^\frac{n\kappa}{p-1}(\tilde\lambda\tilde\sigma)^{-\frac{sp}{p-1}}
\left(\frac{(\tilde\lambda\tilde\sigma)^\frac{sp}{p-1}}{1-\tilde\eta}\right)^{j_0}\tilde\omega_{j_0}\leq \tfrac{1}{5}\tilde\nu_*\tilde
\omega_{j_0},
\end{split}\end{equation*}
since $j_0>1$.
Next, we consider the estimate for $T^\prime$. In view of $B_{\frac{1}{4}\tilde\rho_{j_0}
}(x_0)\times (-B\Theta_{j_0} r_{j_0}^{sp},0)\subseteq \hat Q_{j_0-1}$, we conclude from \eqref{barvv} and \eqref{assumptionforoscv} that
for $(y,t)\in\hat Q_{j_0-1}$, there holds
\begin{equation*}\begin{split}
(\bar v(y,t)-\bar\mu_{j_0}^-)_-\leq \essosc_{\hat Q_{j_0-1}}\bar v\leq \gamma\essosc_{\hat Q_{j_0-1}} v\leq \gamma\tilde\omega_{j_0}
\end{split}\end{equation*}
and hence
\begin{equation*}\begin{split}
T^\prime&\leq \gamma \frac{\tilde r_{j_0+1}^\frac{n\kappa}{p-1}}{\Theta_{j_0}
^\frac{1}{m}}\tilde\omega_{j_0}r_{j_0}^{-\frac{sp}{p-1}}(B\Theta_{j_0}r_{j_0}
^{sp})^\frac{1}{m}= \gamma B^\frac{1}{m}\tilde\sigma^\frac{n\kappa}{p-1}\tilde
\omega_{j_0}\leq \tfrac{1}{5}\tilde\nu_*\tilde\omega_{j_0}.
\end{split}\end{equation*}
Finally, we come to the estimate of $T^{\prime\prime}$.
To this end, we first note that for $(y,t)\in \mathbb{R}^n\setminus B_{\rho}(x_0)$, there holds
\begin{equation*}\begin{split}(v(y,t)-\bar\mu_{j_0}^-)_-&\leq (\bar v-\bar\mu_{0}^-)_-(y,t)+\bar\mu_{j_0}^--\bar\mu_0^-\leq (\bar v-\bar\mu_{0}^-)_-(y,t)+\omega_{\bar v}
\\ &\leq (\bar v-\bar\mu_{0}^-)_-(y,t)+\gamma\tilde\omega_0.
\end{split}\end{equation*}
According to Step 1, we find that \eqref{tailsatisfyvv} holds, since $\tilde\sigma<\sigma^\prime$. This implies that
\begin{equation*}\begin{split}
T^{\prime\prime}&\leq
\tilde\sigma^{j_0\frac{n\kappa}{p-1}}\frac{\tilde r_1^\frac{n\kappa}{p-1}}{\Theta_0
^\frac{1}{m}}\widetilde{\mathrm{Tail}}_m((\bar v-\mu_{\bar v}^-)_-;\hat Q)
+\gamma \frac{\tilde r_{j_0+1}^\frac{n\kappa}{p-1}}{\Theta_{j_0}
^\frac{1}{m}}\tilde\omega_0\rho^{-\frac{sp}{p-1}}(B\Theta_{j_0}r_{j_0}^{sp})^\frac{1}{m}
\\&\leq \tilde\sigma^{j_0\frac{n\kappa}{p-1}} \tilde\nu_*\tilde\omega_0+\gamma B^\frac{1}{m}\tilde\sigma^\frac{n\kappa}{p-1}
(1-\tilde\eta)^{-j_0}\left(\frac{r_{j_0}}{r_1}\right)^\frac{sp}{p-1}\tilde\omega_{j_0}
,\end{split}\end{equation*}
since $\Theta_{j_0}\geq \Theta_0$ and $\rho>r_1$. At this point,
the estimate of $T^{\prime\prime}$
follows in a similar manner as the argument for $T$. This gives
$T^{\prime\prime}
<\tfrac{2}{5}\tilde\nu_*\tilde\omega_{j_0}$,
which proves \eqref{j0AltTailallv} for $j_0\geq2$. Finally, in the case $j_0=1$, we have
\begin{equation*}\begin{split}
&\frac{\tilde r_{2}^\frac{n\kappa}{p-1}}{\Theta_{1}
^\frac{1}{m}}\widetilde{\mathrm{Tail}}_m((\bar v-\bar\mu_{1}^-)_-;Q_{1}^\prime)
\leq T+T^\prime+T^{\prime\prime}<\tilde\nu_*\tilde\omega_{j_0},
\end{split}\end{equation*}
which proves the claim.

Step 3: \emph{Proof of the H\"older continuity of $v$ at $(x_0,0)$.}
Let $\tilde\sigma>0$ be the constant chosen according to \eqref{tildesigmadefv}.
We conclude from Proposition \ref{oscvproposition} and Step 1 that
\begin{equation*}\begin{split}
\essosc_{Q_{j_0+1}^\prime}v\leq
\essosc_{\hat Q_{j_0}}v\leq\tilde\omega_{j_0+1}.\end{split}\end{equation*}
This inequality means that \eqref{assumptionforoscv} holds for $j=j_{0}+1$.
Moreover, we repeat the arguments from Step 2 to obtain
\begin{equation}\begin{split}\label{vassumptionforoscforallj}
\essosc_{Q_{r_j}^{(\Theta_0)}(x_0,0)} v\leq
\essosc_{Q_j^\prime}v\leq\tilde\omega_j=(1-\tilde\eta)^j\tilde\omega_0\end{split}\end{equation}
for all $j\geq0$. For any fixed $r<\rho$,
there exists an integer $\hat j\geq0$, such that $\tilde\delta^{\hat j+1}\rho\leq r\leq \tilde\delta^{\hat j}\rho$. At this point, we use
\eqref{vassumptionforoscforallj} to get
\begin{equation}\begin{split}\label{Holderforv}
\essosc_{Q_r^{(\Theta_0)}(x_0,0)} v\leq
\essosc_{Q_{\hat j+1}}v\leq \tilde\omega_{\hat j+1}\leq \gamma\left(\frac{r}{\rho}\right)^{\beta_0}\tilde\omega_0,\end{split}\end{equation}
where $\beta_0=\frac{\ln(1-\tilde\eta)}{\ln\tilde\delta}$. This establishes a H\"older estimate for the function $v$ at the point $(x_0,0)$.
\subsection{The proof of Theorem \ref{main1}.}\label{finalsubsection}
This subsection is devoted to providing the final part of the proof of Theorem \ref{main1}.
In this subsection, we recover the index $i_0$ in our notation. For any fixed $r<\rho_{i_0}$, the inequality \eqref{Holderforv} reads
\begin{equation}\begin{split}\label{Holderforv1}
\essosc_{Q_r^{(\Theta)}(x_0,0)} v\leq
 \gamma\left(\frac{r}{\rho_{i_0}}\right)^{\beta_0}\omega_v\leq
 \gamma\left(\frac{r}{\rho_{i_0}}\right)^{\beta_0}\omega_{\bar v},\end{split}\end{equation}
 since $\tilde\omega_0=\omega_v$, $\Theta_0=\Theta$ and $\omega_v\leq \gamma_0\omega_{\bar v}$. Noting that $Q_r^{(\Theta)}(x_0,0)\subseteq \hat Q$, we deduce from \eqref{barvv} and \eqref{Holderforv1} that
 \begin{equation*}\begin{split}
\essosc_{Q_r^{(\Theta)}(x_0,0)} \bar v&=\esssup_{Q_r^{(\Theta)}(x_0,0)} v^\frac{1}{q}-\essinf_{Q_r^{(\Theta)}(x_0,0)} v^\frac{1}{q}
\\&\leq\gamma (\esssup_{Q_r^{(\Theta)}(x_0,0)} v+\essinf_{Q_r^{(\Theta)}(x_0,0)} v)^{\frac{1}{q}-1}\essosc_{Q_r^{(\Theta)}(x_0,0)} v
\\&\leq
 \gamma \essosc_{Q_r^{(\Theta)}(x_0,0)} v\leq
 \gamma\left(\frac{r}{\rho_{i_0}}\right)^{\beta_0}\omega_{\bar v}.\end{split}\end{equation*}
 Note that $\bar v(x,t)=(\mu_{i_0}^-)^{-1}
u(x,t_0+(\mu_{i_0}^-)^{q-p+1}t)$ and $\omega_{\bar v}=(\mu_{i_0}^-)^{-1}\omega_{i_0}$.
Transforming
back to the original function $u$, the above inequality gives
 \begin{equation*}\begin{split}
\essosc_{Q_r^{(\bar\theta)}(z_0)} u\leq
 \gamma\left(\frac{r}{\rho_{i_0}}\right)^{\beta_0}\omega_{i_0},\end{split}\end{equation*}
 where $\bar\theta=(\mu_{i_0}^-)^{q-p+1}\Theta$. In view of \eqref{ximuequivalent} and \eqref{gamma0}, we see that
 \begin{equation*}\begin{split}\bar\theta \geq \theta_{i_0}\left(4\frac{1+\xi_0}{1-\eta}\right)^{q-p+1}\left(\frac{\gamma_0}{4\xi_0}\right)^{2-p}
 \geq \bar\gamma\theta_0,\end{split}\end{equation*}
 where $\bar\gamma=4^{q-1}\left(\frac{1+\xi_0}{1-\eta}\right)^{q-p+1}\left(\frac{\gamma_0}{\xi_0}\right)^{2-p}$. Recalling the definition of $\omega_j$ from subsection \ref{nearzeroproof}, Step 3, we have $\omega_{i_0}=(1-\eta)^{i_0}\omega_0$ and hence
  \begin{equation*}\begin{split}
\essosc_{Q_r^{(\bar\gamma\theta_0)}(z_0)} u&\leq
 \gamma\left(\frac{r}{\rho_{i_0}}\right)^{\beta_0}\left(\frac{\rho_{i_0}}{\rho_1}\right)^{\alpha_0}\omega_{0}
 \\&\leq \gamma\left(\frac{r}{\rho_{i_0}}\right)^{\alpha}\left(\frac{\rho_{i_0}}{\rho_1}\right)^{\alpha}\omega_{0}
 \\&=\gamma\left(\frac{r}{\rho_1}\right)^{\alpha}\omega_{0}\leq \gamma_0\left(\frac{r}{R}\right)^{\alpha},
 \end{split}\end{equation*}
 where $\alpha=\min\{\alpha_0,\beta_0\}$.
 Here, the constant $\gamma_0$ depends on the data, $R$, $\|u\|_\infty$ and $\mathrm{Tail}_m(|u|;Q_{R})$.
 Hence, we conclude that the weak solution $u$ is locally H\"older continuous in $\Omega_T$.
 This completes the proof
of Theorem \ref{main1}.
\bibliographystyle{abbrv}

\begin{thebibliography}{9}

\bibitem{KA}
\newblock K. Adimurthi:
\newblock Local H\"older regularity for bounded, signed solutions to nonlocal Trudinger equations,
\newblock ArXiv:2503.07184.

\bibitem{APT}
\newblock K. Adimurthi, H. Prasad, and V. Tewary:
\newblock Local H\"older regularity for nonlocal parabolic $p$-Laplace equations,
\newblock ArXiv:2205.09695, to appear in Ann. Sc. Norm. Super. Pisa Cl. Sci.


\bibitem{BGK1}
\newblock A. Banerjee, P. Garain and J. Kinnunen:
\newblock Some local properties of subsolution and supersolutions for a doubly nonlinear nonlocal $p$-Laplace equation,
\newblock Ann. di Mat.
Pura ed Appl., (4) 201, 4, 1717-1751, (2022).

\bibitem{BGK2}
\newblock A. Banerjee, P. Garain and J. Kinnunen:
\newblock Lower semicontinuity and pointwise behavior of supersolutions for some doubly nonlinear nonlocal parabolic $p$-Laplace equations,
\newblock Commun. Contemp. Math., 25, 8, (2023).

\bibitem{BDL}
\newblock V. B\"ogelein, F. Duzaar and N. Liao:
\newblock On the H\"older regularity of signed solutions to a doubly nonlinear equation,
\newblock J. Funct. Anal., 281(9), (2021).

\bibitem{BDLS}
\newblock V. B\"ogelein, F. Duzaar, N. Liao and L. Sch\"atzler:
\newblock On the H\"older regularity of signed solutions to a doubly nonlinear
equation. Part II,
\newblock Rev. Mat. Iberoam., 39(3), 1005-1037, (2022).

\bibitem{BHSS}
\newblock V. B\"ogelein, A. Heran, L. Sch\"atzler and T. Singer:
\newblock Harnack's inequality for doubly nonlinear equations of slow diffusion type,
\newblock Calc. Var., 60, 215 (2021).

\bibitem{BLS}
\newblock L. Brasco, E. Lindgren and M. Str\"omqvist:
\newblock Continuity of solutions to a nonlinear fractional diffusion equation,
\newblock J. Evol. Equ. 21, 4319-4381, (2021).

\bibitem{BK}
\newblock S. S. Byun and  K. Kim:
\newblock A H\"older estimate with an optimal tail for nonlocal parabolic $p$-Laplace equations,
\newblock Ann. di Mat.
Pura ed Appl., 203, 109-147, (2024).

\bibitem{CKP1}
\newblock A. Di Castro, T. Kuusi and G. Palatucci:
\newblock Local behavior of fractional $p$-minimizers,
\newblock Ann. Inst. H. Poincar\'e Anal. Non Lin\'eaire, 33, 1279-1299, (2016).

\bibitem{CKP2}
\newblock A. Di Castro, T. Kuusi and G. Palatucci:
\newblock Nonlocal Harnack inequalities,
\newblock J. Funct. Anal., 267, 1807-1836, (2014).

\bibitem{CiNa}
\newblock S. Ciani and K. Nakamura:
\newblock Diverse regularities for nonlocal parabolic De Giorgi classes I,
\newblock ArXiv:2508.16247.

\bibitem{Di93}
\newblock E. DiBenedetto:
\newblock Degenerate Parabolic Equations.
\newblock Universitext, Springer, New York (1993).

\bibitem{DGV}
\newblock E. DiBenedetto,U. Gianazza, and V. Vespri:
\newblock Harnack's Inequality for Degenerate and Singular Parabolic
Equations,
\newblock Springer Monographs in Mathematics, New York, (2012).

\bibitem{DZZ}
\newblock M. Ding, C. Zhang and S. Zhou:
\newblock Local boundedness and H\"older continuity for the parabolic fractional $p$-Laplace equations,
\newblock Calc. Var. 60, 38, (2021).

\bibitem{FS}
\newblock S. Fornaro and M. Sosio:
\newblock Intrinsic Harnack estimates for some doubly nonlinear degenerate parabolic equations,
\newblock Adv. Differ. Equ., 13(1-2), 139-168, (2008).

\bibitem{GS}
\newblock U. Gianazza and J. Siljander:
\newblock Local bounds of the gradient of weak solutions to the porous medium equation,
\newblock Partial Differ. Equ. Appl. 4, 8, (2023).

\bibitem{I}
\newblock A. V. Ivanov:
\newblock H\"older estimates for quasilinear doubly degenerate parabolic equations,
\newblock Zap. Nauchn. Sem. LOMI., 171, 70-105, (1989).

\bibitem{KW}
\newblock M. Kassmann and M. Weidner:
\newblock The parabolic Harnack inequality for nonlocal equations,
\newblock Duke Math. J.,
173, (17), 3413-3451, (2024).


\bibitem{KLP}
\newblock K. Kim, H. Lee and H. Prasad:
\newblock Local H\"older regularity for nonlocal porous media and fast diffusion equations with general kernel,
\newblock ArXiv:2504.15735.


\bibitem{Liao1}
\newblock N. Liao:
\newblock  Regularity of weak supersolutions to elliptic and parabolic equations: lower semicontinuity and pointwise behavior,
\newblock  J. Math. Pures Appl., (9), 147, 179-204, (2021).

\bibitem{Liao2}
\newblock N. Liao:
\newblock  Personal communication,
\newblock  (2023).

\bibitem{Liao}
\newblock  N. Liao:
\newblock H\"older regularity for parabolic fractional $p$-Laplacian,
\newblock Calc. Var. 63, 22, (2024).

\bibitem{LS}
\newblock N. Liao and L. Sch\"atzler:
\newblock On the H\"older regularity of signed solutions to a doubly nonlinear equation. Part III,
\newblock Int. Math. Res. Not., 3, 2376-2400, (2022).


\bibitem{Nakamura}
\newblock K. Nakamura:
\newblock Local properties of fractional parabolic De Giorgi classes of order $s$, $p$,
\newblock J. Funct. Anal.,
285, 110049, (2023).

\bibitem{PV}
\newblock M. M. Porzio and V. Vespri:
\newblock H\"older estimates for local solutions of some doubly nonlinear degenerate parabolic equations,
\newblock J. Differ. Equ., 103 146-178, (1993).

\bibitem{SZ}
\newblock B. Shang and C. Zhang:
\newblock Weak Harnack estimates for a doubly nonlinear nonlocal $p$-Laplace equation,
\newblock ArXiv:2502.19925.

\bibitem{U}
\newblock J. M. Urbano:
\newblock The method of intrinsic scaling: A Systematic Approach to Regularity for Degenerate and Singular PDEs,
\newblock Lecture Notes in Math., 1930, (2008).

\bibitem{V}
\newblock V. Vespri:
\newblock Harnack type inequalities for solutions of certain doubly nonlinear parabolic equations,
\newblock J. Math.
Anal. Appl., 181(1), 104-131, (1994).

\end{thebibliography}

\end{document}